\newtheorem{theorem}{Theorem}[section]
\newtheorem{Conjecture} [theorem]{Conjecture}
\newtheorem{Counter-example}[theorem]{Counter example}
\newtheorem{Claim}[theorem]{Claim}
\newtheorem{Lemma}[theorem]{Lemma}
\newtheorem{Proposition}[theorem]{Proposition}
\newtheorem{Corollary}[theorem]{Corollary}
\newtheorem*{theorem*}{Theorem}
\newcommand{\image}{{\rm im}}
\newcommand{\ignore}[1]{}
\newcommand{\id}{{\rm Id}}
\DeclareMathOperator*{\cmpct}{cpct}
\DeclareMathOperator*{\diag}{diag}
\newcommand\blfootnote[1]{%
  \begingroup
  \renewcommand\thefootnote{}\footnote{#1}%
  \addtocounter{footnote}{-1}%
  \endgroup
}
\title{Affine embeddings of Cantor sets in the plane}
\author{Amir Algom}
\date{}
\begin{document}
\maketitle

\begin{abstract}
Let \blfootnote{Supported by ERC grant 306494}  $F,E\subseteq \mathbb{R}^2$ \blfootnote{2010 Mathematics Subject Classification. 28A80} be two self similar sets. First, assuming $F$ is generated by an IFS $\Phi$ with strong separation, we characterize the affine maps $g:\mathbb{R}^2 \rightarrow \mathbb{R}^2$ such that $g(F)\subseteq F$. Our analysis depends on the cardinality of the group $G_\Phi$ generated by the orthogonal parts of the similarities in $\Phi$. When $|G_\Phi|=\infty$ we show that any such self embedding must be a similarity, and so (by the results of Elekes, Keleti and M\'ath\'{e} \cite{elekes2010self})  some power of its orthogonal part lies in $G_\Phi$. When $|G_\Phi| < \infty$ and $\Phi$ has a uniform contraction $\lambda$, we show that the linear part of any such embedding is diagonalizable, and the norm of each of its eigenvalues is a rational power of $\lambda$.

 We also study the existence and properties of affine maps $g$ such that $g(F)\subseteq E$, where $E$ is generated by an IFS $\Psi$. In this direction,  we provide  more evidence for a Conjecture of Feng, Huang and Rao \cite{feng2014affine}, that such an embedding exists only if the contraction ratios of the maps in $\Phi$ are algebraically dependent on the contraction ratios of the maps in $\Psi$. Furthermore, we show that, under some conditions, if $|G_\Phi|=\infty$ then $|G_\Psi|=\infty$ and if $|G_\Phi|<\infty$ then $|G_\Psi|<\infty$.
\end{abstract}

\tableofcontents

\section{Introduction}
Let $F,E \subset \mathbb{R}^2$ be two Cantor sets. We say that $F$ may be affinely embedded into $E$ if there exists an affine map $g:\mathbb{R}^2 \rightarrow \mathbb{R}^2$,  $g(z)= A\cdot z+t$ where $A\in GL(\mathbb{R}^2),t\in \mathbb{R}^2$ such that $g(F)\subseteq E$. When $F=E$ we say that $g$ is a self embedding of $F$.

In this paper we pursue two main objectives. The first is to exhibit how the microscopic structure of $F$ imposes severe restrictions on its self embeddings. Specifically, for dynamically defined sets one expects that any self-symmetry should "come from" the generating dynamics. For example, we expect that the eigenvalues of the linear part of a self embedding of $F$ to be algebraically related to the scales appearing in the multiscale structure of $F$. In particular,  there should not be too many self embeddings of $F$, where too many can mean e.g. positive Hausdorff dimension or even uncountable cardinality.  The second objective is to show that if $F$ can be affinely embedded into $E$ then their respective microscopical structures must be compatible in some sense. For example, following a conjecture of  Feng, Huang and Rao \cite{feng2014affine}, if $F$ and $E$ are self similar sets we expect  that the contraction ratios of the IFS's defining $F$ and $E$ should have some algebraic relations (see Conjecture \ref{Fengs conjecture} for a precise statement).

 We devote the subsequent sections to stating our results and discussing the relevant literature. We focus our attention solely on self similar sets in the plane. However,  we note that the case of affine embeddings of self affine sets is also tractable in some cases; see our recent joint work with Hochman \cite{algom2016self} about self-embeddings of strictly self affine Bedford-McMullen carpets. 
 
\textbf{Standing assumptions} Throughout this paper, unless stated otherwise, all self similar sets are assumed to be in $\mathbb{R}^2$. We shall always assume that they do  not lie on a $1$-dimensional affine line, and that their Hausdorff dimension is strictly between $0$ and $2$.

\subsection{Self embeddings of self similar sets} We begin with the case of self embeddings. Recall that a similarity map $g: \mathbb{R}^2 \rightarrow \mathbb{R}^2$ is an affine map $g(z)= \alpha O\cdot z+t$ such that $O\in O(\mathbb{R}^2)$, where $O(\mathbb{R}^2)$ is the orthogonal group of the Euclidean space $\mathbb{R}^2$, $\alpha \neq 0$ is a scalar and $t\in \mathbb{R}^2$. We call $O$ the orthogonal part of $g$.

Let $\Phi = \lbrace \phi_i \rbrace_{k=1} ^l , l\in \mathbb{N}, l\geq 2$ be a family of contractions $\phi_i : \mathbb{R}^d \rightarrow \mathbb{R}^d, d\geq 1$. The family $\Phi$ is called an iterated function system, abbreviated IFS, the term being coined by Hutchinson \cite{hutchinson1981fractals}, who defined them and studied some of their fundamental properties. In particular, he proved that there exists a unique compact $\emptyset \neq F \subset \mathbb{R}^d$ such that $F = \bigcup_{i=1} ^l \phi_i (F)$. $F$ is called the attractor of $\Phi$, and $\Phi$ is called a generating IFS for $F$.  A set $F \subset \mathbb{R}^d$ will be called self similar if there exists a generating IFS $\Phi$ for $F$ such that $\Phi$ consists only of similarity mappings.

Let $F$ be a self similar set with a generating IFS $\Phi$. We shall say that $\Phi$   satisfies the strong separation condition, abbreviated SSC, if $i \neq j \Rightarrow \phi_i (F) \cap \phi_j (F) = \emptyset$.  We shall say that it satisfies the open set condition, hereafter abbreviated OSC, if there exists an open set $\emptyset \neq U \subset \mathbb{R}^d$ such that $i \neq j \Rightarrow \phi_i (U) \cap \phi_j (U) = \emptyset$, and $\phi_i (U) \subseteq U$ for any $\phi_i \in \Phi$. Also, For a multi-index $I=(i_1,...,i_k) \in \lbrace 1,..,l \rbrace^*$, we define
\begin{equation*}
\phi_I = \phi_{i_1} \circ ... \circ \phi_{i_k}.
\end{equation*}

 Suppose $F \subseteq \mathbb{R}^2$ is a self similar set, generated by an IFS $\Phi = \lbrace \phi_i \rbrace_{i=1} ^l$ such that all the maps in $\Phi$ are similarities. Let $\mathcal{E}(F)$ denote the set of affine self embeddings of $F$, that is
\begin{equation} \label{equation set of self embeddings}
\mathcal{E}(F)=\lbrace g:\mathbb{R}^2\rightarrow \mathbb{R}^2 : g \text{ is affine  and } g(F)\subseteq F \rbrace.
\end{equation}
Let $G_\Phi \leq  O (\mathbb{R}^2) $ be the group generated by the orthogonal parts of the similarities in the IFS $\Phi$. We first discuss our results on the nature of the maps in $\mathcal{E}(F)$, depending on weather $G_\Phi$ is finite or infinite.

\begin{theorem} \label{Theorem embeddings}
Let $F \subseteq \mathbb{R}^2$ be a self similar set such that $0<\dim_H F<2$, that is generated by an IFS $\Phi$ satisfying the SSC, and let  $g:\mathbb{R}^2 \rightarrow \mathbb{R}^2$ be an affine map such that $g(F) \subseteq F$.  
\begin{enumerate}
\item \label{Theorem embeddings of irrational type} If $|G_\Phi| = \infty$ then $g$ is a similarity, and there exists some $k\in \mathbb{N}$ and $I,J \in \lbrace 1,...,l\rbrace^*$ such that
\begin{equation*}
g^k \circ \phi_I = \phi_J.
\end{equation*}

\item \label{Theorem embeddings of rational type} If $|G_\Phi| < \infty$, and if all the maps in $\Phi$ share the same contraction ratio $\lambda>0$, then the linear part of $g$ is diagonalizable over $\mathbb{C}$, with eigenvalues $\gamma_1,\gamma_2 \in \mathbb{C}$ satisfying that $|\gamma_1| = \lambda^p, |\gamma_2| = \lambda^q$ for some $p,q \in \mathbb{Q}$.
\end{enumerate}
\end{theorem}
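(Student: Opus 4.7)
For Part (1), my plan is to reduce to showing that $g$ is a similarity; once this is done, the remaining conclusion $g^k\circ\phi_I=\phi_J$ follows by combining the Elekes--Keleti--M\'ath\'e theorem (which yields $O_g^k\in G_\Phi$ for some $k\in\mathbb{N}$, where $O_g$ is the orthogonal part of $g$) with the semigroup structure of the cylinder maps under the SSC. So the main work is to show that the linear part $A$ of $g$ is conformal. I will argue by contradiction: assume $A$ has distinct singular values $\sigma_1>\sigma_2$, and hence a preferred axis of maximal contraction. Since $G_\Phi$ is a finitely generated infinite subgroup of the compact group $O(\mathbb{R}^2)$, its closure contains $SO(\mathbb{R}^2)$. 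For any two multi-indices $I,J$ the map $\phi_J\circ g\circ\phi_I$ is an affine self-embedding of $F$ with linear part $\lambda_I\lambda_J\,O_J\,A\,O_I$, so varying $I,J$ produces a family of self-embeddings whose linear parts are $A$ pre- and post-multiplied by a dense set of rotations.

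The contradiction will come from a tangent-measure (micromeasure) analysis. Let $\mu$ be the natural self-similar measure on $F$. Under the SSC, the set of micromeasures obtained by zooming in at a $\mu$-typical $x$ is $O(\mathbb{R}^2)$-invariant when $|G_\Phi|=\infty$, because the rotations $O_I$ realized by cylinder compositions $\phi_I$ are dense in $O(\mathbb{R}^2)$ and each appears at a cylinder of comparable scale. The inclusion $g(F)\subseteq F$ forces the $A$-pushforward of such a tangent at $x$ to be a tangent of $F$ at $g(x)$, hence to again lie in a rotation-invariant family. A non-conformal $A$ breaks isotropy at infinitesimal scale and so cannot map a rotation-invariant family into itself; this contradiction forces $A$ to be a scalar multiple of an orthogonal matrix.

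For Part (2), I first observe that compactness of $F$ together with $g^n(F)\subseteq F$ for all $n$ forces both eigenvalues of $A$ to have modulus at most $1$: otherwise iterates of $A$ would carry some point of $F$ to infinity. The same kind of argument will rule out a nontrivial Jordan block: if $A$ were not diagonalizable then $\|A^n v\|\asymp n\,|\gamma|^n$ in a generalized eigenvector direction, whereas under the uniform contraction hypothesis all cylinders of $F$ at level $n$ have diameter $\lambda^n\cdot\mathrm{diam}(F)$, and the polynomial factor $n$ cannot be accommodated by this exponential scale lattice. Once diagonalizability is established, write the eigenvalues as $\gamma_1,\gamma_2$ with moduli $\mu_i=|\gamma_i|$; then the maps $\phi_J\circ g^n\circ\phi_I$ are affine self-embeddings of $F$ with linear parts $\lambda^{|I|+|J|}\,O_J\,A^n\,O_I$ and singular values $\asymp\lambda^{|I|+|J|}\mu_i^n$. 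Since $|G_\Phi|<\infty$, only finitely many pairs $(O_I,O_J)$ occur, so pigeonholing over $n$ produces two self-embeddings with identical orthogonal data but distinct pairs $(n_1,|I_1|+|J_1|)$ and $(n_2,|I_2|+|J_2|)$. The rigidity of the self-embedding semigroup of $F$ under the SSC then forces a relation $\mu_i^{n_1}\lambda^{m_1}=\mu_i^{n_2}\lambda^{m_2}$ for some distinct exponent pairs, equivalent to $\log\mu_i/\log\lambda\in\mathbb{Q}$.

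I expect the tangent-measure step of Part (1) to be the main obstacle: making precise the $O(\mathbb{R}^2)$-invariance of the family of micromeasures of $F$ when $|G_\Phi|=\infty$, and extracting from it the non-existence of non-conformal self-embeddings, is the heart of the argument. Part (2) is largely algebraic after diagonalizability is in hand, but the Jordan-block exclusion and the final pigeonhole step both rely crucially on the SSC and the uniform contraction assumption to prevent accidental coincidences between scales.
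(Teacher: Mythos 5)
Your proposal departs significantly from the paper's argument, and several of the key steps have genuine gaps.

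\textbf{Part (1).} The paper's proof runs through the entropy inverse theorem: one shows that a non-conformal $A$ produces a set of self-embeddings $\mathcal{E}(F)$ of Hausdorff dimension $\geq 1/2$, which contradicts Theorem~\ref{Theorem zero dimension of self embeddings} (itself a consequence of Hochman's inverse theorem, Theorem~\ref{Theorem Inverse Theorem main application}). Two issues with your sketch. First, the ``restrict–map–rescale'' step must use $\phi_J^{-1}\circ g\circ \phi_I$ (post-composing with an \emph{inverse} cylinder map that contains $g(\phi_I(F))$, which is where the SSC enters), not $\phi_J\circ g\circ\phi_I$; the latter produces maps whose linear parts have norms tending to $0$ and give no useful information, even after your suggested rescaling. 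Second, the central claim of your micromeasure approach — that ``a non-conformal $A$ breaks isotropy at infinitesimal scale and so cannot map a rotation-invariant family into itself'' — is not justified and is false as a general principle (e.g., the family of \emph{all} probability measures is rotation-invariant and preserved by every $A$). What the paper actually exploits about $A(S^1)$ being a non-circular ellipse is that the function $R\mapsto \|A R(e_1)\|^{-1}$ is a non-constant continuous map of $SO(\mathbb{R}^2)$, whose image is an interval; this is then pulled back through a locally Lipschitz map to lower-bound $\dim\Sigma$. The isotropy heuristic would need to be turned into something of comparable precision, and you have not explained how.

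\textbf{Part (2).} Your argument for excluding a Jordan block fails. You claim the polynomial factor $n$ in $\|A^n\|\asymp n|\gamma|^n$ ``cannot be accommodated by the exponential scale lattice,'' but $n|\gamma|^n\to 0$ and there is always some cylinder of generation $\approx \log(n|\gamma|^n)/\log\lambda$ containing $g^n(F)$; no contradiction arises from scale-counting alone. In the paper, excluding the Jordan block is a substantial argument (Section 6.1): after rescaling $g^n$ by the appropriate cylinder, the normalized linear parts converge to rank-one matrices, which forces a projection $L$ of $F$ to embed into $F$ in a $1$-parameter family of ways; one then uses Farkas's results on graph-directed attractors and the \emph{one-dimensional} inverse theorem (Theorem~\ref{Theorem 4.2}) to obtain the contradiction, keying on the fact (Corollary~\ref{Corollary SSC implies dimension gap for slices}) that slices of a strongly separated self-similar set have dimension strictly less than $1$. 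Your pigeonhole step for eigenvalue rationality similarly assumes a ``rigidity of the self-embedding semigroup'' that is precisely the hard thing to establish — the paper proves the rationality of $|\gamma_1|$ with an inverse-theorem argument, and the rationality of $|\gamma_2|$ with an even more elaborate construction involving the weak separation condition of $P_1(F)$, Furstenberg homogeneity, product disintegrations of the natural measure, and a separate inverse theorem for product measures (Theorem~\ref{Theorem Inverse Theorem for product measures - finite group}). None of this machinery is replaced or circumvented by the algebraic pigeonhole you describe, because two self-embeddings with the same orthogonal data but different singular values can \emph{a priori} coexist; ruling that out is equivalent to the theorem you are trying to prove.
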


Part (1) of Theorem \ref{Theorem embeddings} is in fact a generalization of a result of Elekes, Keleti, and M\'ath\'{e}. In \cite{elekes2010self}, they proved that for a self similar set $F \subset \mathbb{R}^d$ with the SSC, if $g$ is a similarity such that $g(F)\subseteq F$ then there exists some $k\in \mathbb{N}$ and $I,J \in \lbrace 1,...,l \rbrace^*$ such that
\begin{equation*}
g^k \circ \phi_I = \phi_J.
\end{equation*}
The arguments used by them to prove this result are based on the scaling nature of Hausdorff measures with respect to similarity maps. Thus, the main novelty of part (1) of Theorem \ref{Theorem embeddings} is in its treatment of general affine map (i.e. not necessarily similarity maps).  

Other results in this direction include the work of Feng and Wang \cite{feng2009structures}, that treat self embeddings of attractors of homogeneous (i.e. with uniform contraction ratio) IFS's in dimension $1$.  Generalizing this work, in \cite{Deng2013equivalence} Deng and Lau proved that for self similar sets in $\mathbb{R}^2$, if $\Phi$ and $\Psi$ are two homogeneous generating IFS's (i.e. with uniform linear parts) for $F$ each satisfying the SSC, then there exists some $m,n\in \mathbb{N}$ such that $\Phi^m = \Psi^n$, where $\Phi^m$ is the generating IFS for $F$ consisting of $m$-fold compositions of maps from $\Phi$. Recently, in \cite{Deng2017structure}, the same authors  generalized this result by replacing the SSC with the OSC. Although this is a related line of research, it is also rather different: while Deng and Lau search for algebraic relations between different self similar generating IFS's of the same attractor, we are more interested in affine non-similarity maps that preserve the attractor.

Theorem \ref{Theorem embeddings} is also related to the  work of Bonk and Merenkov \cite{bonk2013quasi}. In this paper, the authors show that any quasi-symmetric self map of the standard Sierpi\'{n}ski carpet with respect to base $3$ must be an isometry (they also obtain results about self maps of corresponding standard Sierpi\'{n}ski carpets for odd integers $p\geq 3$). In general, the class of quasi-symmetric maps is much more general than the class of affine maps. However, there are two main differences between our results and those of Bonk and Merenkov: First, quasi-symmetric maps are by definition homeomorphisms, and in particular are onto. In our setting, the self  maps do not necessarily have to be onto (they are onto as maps $g:\mathbb{R}^2 \rightarrow \mathbb{R}^2$, but the containment $g(F) \subseteq F$ may be strict). Secondly, Bonk and Merenkov consider Sierpi\'{n}ski carpets, which are endowed with a special grid structure, as well as being self similar sets generated by IFS's with $G_\Phi = \lbrace \id \rbrace$. We allow our self similar IFS's to have rotations, and also do not require the special grid structure present in Sierpi\'{n}ski carpets.

We proceed to discuss part (2) of Theorem \ref{Theorem embeddings}: the case when $|G_\Phi| <\infty$. We note that in this setting, it is no longer true that if $g$ is an affine map such that $g(F)\subseteq F$ then $g$ is a similarity. Indeed, let $F=C\times C \subset \mathbb{R}^2$, where $C\subset \mathbb{R}$ is the classic middle thirds Cantor set. Then $F$ admits a generating  IFS 
\begin{equation*}
\Phi = \lbrace \phi_{i,j} (x,y) = (\frac{x+i}{3},\frac{y+j}{3})\rbrace_{(i,j)\in \lbrace (0,0),(2,0),(0,2),(2,2)\rbrace}.
\end{equation*}
Note that $\Phi$ has the SSC, $G_\Phi = \lbrace \id \rbrace$ and all the maps in $\Phi$ share  a uniform contraction ratio $\frac{1}{3}$. For every $m\neq n \in \mathbb{N}$ define the linear maps
\begin{equation*}
g_1 ^{n,m} (x,y) = (\frac{x}{3^n},\frac{y}{3^m}), \quad g_2 ^{n,m} (x,y) = (\frac{y}{3^n},\frac{x}{3^m}).
\end{equation*}
This defines a countable family of linear maps, and every map in this family is an affine self-embedding of $F$ that is not a similarity. Note, however, that as predicted by part (2) of Theorem \ref{Theorem embeddings}, the eigenvalues of every map in this family are rational powers of $\frac{1}{3}$, the uniform contraction ratio of $\Phi$.

We also notice that part (2) of Theorem \ref{Theorem embeddings} fails if we replace the SSC with the OSC. Indeed, consider the set $E= C \times [0,1]$. Then $\dim_H E = \frac{\log 6}{\log 3}$, and $E$  is in fact the attractor of the IFS
\begin{equation*}
\lbrace x\mapsto \frac{x}{3}, x\mapsto \frac{x}{3}+(0, \frac{1}{3}), x\mapsto \frac{x}{3}+(0, \frac{2}{3}), x\mapsto \frac{x}{3}+(\frac{2}{3},0), x\mapsto \frac{x}{3}+( \frac{2}{3},\frac{1}{3}), x\mapsto \frac{x}{3}+(\frac{2}{3},\frac{2}{3}) \rbrace.
\end{equation*}
One easily sees that for any $a\in [0,1]$, the linear map 
\begin{equation*}
(x,y)\mapsto (\frac{x}{3}, y\cdot a)
\end{equation*}
defines a self embedding of $E$, so the characterization of the eigenvalues of self embeddings in part (2) of Theorem \ref{Theorem embeddings} is no longer true. Moreover, consider the linear map $g:\mathbb{R}^2 \rightarrow \mathbb{R}^2$ defined by the matrix
\begin{equation*}
A = \begin{pmatrix}
\frac{1}{3} & 0 \\
a & \frac{1}{3} \\
\end{pmatrix} \text{ where } 0< a \leq  \frac{2}{3}.
\end{equation*}
Then it is readily seen that $g(E)\subseteq E$, but $A$ is not diagonalizable.

Let us make one final remark about part (2) of Theorem \ref{Theorem embeddings}. Let  $g$ be a self embedding of $F$, where $g$ is an affine map and $F$ is as in part (2) of Theorem \ref{Theorem embeddings}. Suppose that the linear part of $g$ has two complex non-real eigenvalues, so  they are complex conjugates of each other and have the same norm. It is well known that this implies that the linear part of $g$ is similar to a similarity matrix (a scaled rotation matrix). In this context, we believe  that this should imply that $g$ is already a similarity map. However, we do not have a proof of this claim.

We proceed to state our general results about the set $\mathcal{E}(F)$, defined in equation \eqref{equation set of self embeddings}, which we can naturally identify with a subset of $\mathbb{R}^6$.  Note that, under the conditions of part (1) of Theorem \ref{Theorem embeddings}, $\mathcal{E}(F)$ must be countable. This is because  Elekes, Keleti, and M\'ath\'{e} proved in \cite{elekes2010self} that the set of similarity self-embeddings of a self similar set with the SSC is countable, and by part (1) of Theorem \ref{Theorem embeddings} all the  maps in $\mathcal{E}(F)$ are similarities.  We would like, however, to say something about $\mathcal{E}(F)$ without these assumptions.

 Let $G(2,1)$ denote the collection of all $1$-dimensional linear spaces in $\mathbb{R}^2$. For $V\in G(2,1)$ let $P_V :\mathbb{R}^2 \rightarrow V$ denote the orthogonal projection from $\mathbb{R}^2$ onto  $V$.  Let $\mu \in P(\mathbb{R}^2)$ be a self similar measure with respect to an IFS $\Phi = \lbrace \phi_i \rbrace_{i=1} ^l$. That is, for a non-degenerate probability vector $(p_1,..,p_l)$,  $\mu$ is the unique measure satisfying
\begin{equation*}
\mu = \sum_{i=1} ^l p_i \cdot \phi_i \mu.
\end{equation*}
We say that $\mu$ admits a $1$-slicing if there exists some line $V\in G(2,1)$  such that:
\begin{enumerate}

\item $V$ is $D \Phi$-invariant, i.e. for every $A\in O(\mathbb{R}^2)$ that is a linear part of some $g\in \Phi$, the line $A(V)$ is equal to $V$.

\item The conditional measures of $\mu$ on translates of $V$, obtained by disintegrating $\mu$ according to the projection $P_{V^\perp}$, are $P_{V^\perp} \mu = \mu \circ P_{V^\perp} ^{-1}$-a.s. exact dimensional and of dimension $1$.

\end{enumerate}
Otherwise, we say that $\mu$ does not admit a $1$-slicing. The following Proposition provides many examples of self similar measures that do not admit a $1$-slicing:
\begin{Proposition} \label{Theorem saturation for self similar measures}
Let $\mu$ be a self similar measure with respect to an IFS $\Phi$ on $\mathbb{R}^2$. 
\begin{enumerate}
\item Suppose either $5\leq |G_\Phi|$ or  $|G_\Phi \cap SO(\mathbb{R}^2)|>2$. Then $\mu$  does not admit a $1$-slicing.

\item If $\Phi$ satisfies the SSC then $\mu$ is does not admit a $1$-slicing.
\end{enumerate}
\end{Proposition}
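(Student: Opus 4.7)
Plan for part (1): A line $V \in G(2,1)$ is preserved as a set by $A \in O(\mathbb{R}^2)$ iff $A \in \{I, -I\}$ or $A$ is a reflection whose axis is $V$ or $V^\perp$. Consequently $G_\Phi$ admits a common invariant line in $G(2,1)$ iff it lies inside a copy of the dihedral group $D_2 = \{I, -I, R, -R\}$, which satisfies $|G_\Phi| \leq 4$ and $|G_\Phi \cap SO(\mathbb{R}^2)| \leq 2$. Either of the stated hypotheses excludes this, so no $D\Phi$-invariant $V$ exists and condition (1) in the definition of a $1$-slicing fails.

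Plan for part (2): I argue by contradiction. Assume $\mu$ admits a $1$-slicing in direction $V$. By $D\Phi$-invariance each $\phi_i$ decomposes as $\phi_i = (\phi_i^V, \phi_i')$ on $V \oplus V^\perp$, with $\phi_i^V, \phi_i'$ similarities of ratio $r_i$. Partition the indices $\{1, \ldots, l\} = G_1 \sqcup \cdots \sqcup G_k$ by $i \sim j \iff \phi_i' = \phi_j'$, and write $\psi_a$ for the common projected map on class $G_a$. The key consequence of SSC is: for $i \neq j$ in the same class $G_a$ the cylinders $\phi_i(F), \phi_j(F)$ share their $V^\perp$-projection, so SSC disjointness forces $\phi_i^V(F_{y''}) \cap \phi_j^V(F_{y''}) = \emptyset$ for every $y'' \in P_{V^\perp}F$, where $F_{y''} := \{x \in V : (x, y'') \in F\}$. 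Writing each line slice $F \cap L_y$ as a disjoint union of scaled copies $\phi_i^V\bigl(F_{\psi_a^{-1}(y)}\bigr)$ indexed by classes $a$ with $y \in \psi_a(P_{V^\perp}F)$ and maps $i \in G_a$, and iterating $n$ steps, produces $\prod_{k=1}^n |G_{a_k}|$ pairwise disjoint pieces of diameter comparable to $r^n$ (in the homogeneous case $r_i \equiv r$) inside the bounded set $F \cap L_y$; compactness plus SSC bound this count by $C r^{-n}$ uniformly in $n$. Dividing by $n \log(1/r)$ and averaging over $\nu$'s ergodic measure on codes yields $\dim \mu_y \leq \mathbb{E}_\nu[\log|G_a|]/\log(1/r) < 1$, contradicting $\dim \mu_y = 1$.

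The main obstacle is the non-homogeneous case, where the slice dimension is governed by a Bowen-type pressure equation on the graph-directed slice system rather than a simple branching count. The plan is to exploit the uniform $\epsilon$-separation from SSC in $\mathbb{R}^2$ to show that this pressure is strictly negative at the critical exponent $s = 1$; alternatively, one can pass to high iterates $\Phi^n$, whose contractions become asymptotically homogeneous, apply the above count, and then invoke Feng--Hu exact-dimensionality of the conditional measures $\mu_y$ to transfer the box-counting bound to Hausdorff dimension $\nu$-a.s.
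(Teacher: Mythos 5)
Your part (1) matches the paper's argument exactly: both identify the subgroups of $O(\mathbb{R}^2)$ admitting an invariant line as the subgroups of a dihedral group of order $4$, whose rotation subgroup has order at most $2$, so either hypothesis rules this out. That part is correct.

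Your part (2), however, has a genuine gap, and it is precisely at the step where the whole difficulty lives. Your bound ``compactness plus SSC bound this count by $C r^{-n}$'' is correct, but that inequality only yields $\frac{1}{n}\sum_{k=1}^n \log|G_{a_k}| \le \log(1/r) + O(1/n)$, hence $\mathbb{E}_\nu[\log|G_a|]/\log(1/r) \le 1$. The strict inequality $\mathbb{E}_\nu[\log|G_a|]/\log(1/r) < 1$ that you then invoke to finish the contradiction is asserted, not derived --- and it does not follow from disjointness plus boundedness. Packing pairwise $\rho$-separated pieces of diameter $\lesssim r^n$ into a bounded segment gives exactly the order-$r^{-n}$ bound, which sits \emph{at} the threshold $\dim = 1$, not below it. Indeed the example $E = C\times[0,1]$ (OSC but not SSC, used in the paper) has $\mathbb{E}[\log|G_a|]/\log(1/r) = \log 3/\log 3 = 1$ exactly, so some additional use of the stronger separation is mandatory. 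Your two fallback plans --- showing the Bowen pressure is strictly negative at $s=1$, or passing to high iterates to make the system asymptotically homogeneous --- are both still sketches that do not supply the missing strictness; in the second plan the estimate remains $\le 1$ for each high iterate.

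The paper sidesteps this entirely by proving the contrapositive qualitatively rather than quantitatively. It introduces critical exponents $s_n \uparrow 1$ defined by $(2^n-1)2^{-n s_n}=1$ and shows (Proposition~\ref{Proposition full dimension of fiber implies connected}) that if some slice satisfies $\dim(F\cap L)=1$, i.e.\ $\dim(F\cap L) > s_n$ for every $n$, then a restrict--map--rescale compactness argument with the $N_0$-bound of Lemma~\ref{Lemma N0} produces boundedly many affine maps $g_i$ whose preimages of $F$ cover $[0,1]$; Baire then forces some $g_i^{-1}(F)$ to contain an interval, so some slice $F\cap L'$ contains a nondegenerate interval. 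Under SSC the attractor is totally disconnected, which forbids this, and part~(2) follows immediately (Corollary~\ref{Corollary SSC implies dimension gap for slices}) since $\dim\mu_{[x]} \le \dim(F\cap L) < 1$. If you want to rescue your approach you would need to prove that SSC forces the slice pressure strictly below $0$ at $s=1$, and I believe that reduces, after unwinding, to essentially the same compactness/microset mechanism as the paper's.
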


Our most general result on the dimension of $\mathcal{E}(F)$ is the following. By a self similar measure of maximal dimension we mean a self similar measure that has  Hausdorff dimension $\dim F$.
\begin{theorem} \label{Theorem zero dimension of self embeddings}
Let $F\subseteq \mathbb{R}^2$ be a self similar set generated by an IFS $\Phi$, such that $0<\dim F <2$. Suppose $F$ supports a self similar measure of maximal dimension, and that this measure does not admit a $1$-slicing. Then $\dim \mathcal{E}(F) =0$.
\end{theorem}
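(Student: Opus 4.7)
The plan is to project to lines, turning the two-dimensional embedding problem into a family of one-dimensional embedding problems, and then use the no-1-slicing hypothesis to obtain rigidity uniformly in every direction of projection.

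As a starting point, observe that since $F$ is compact and not contained in any line, every $g = (A,t) \in \mathcal{E}(F)$ has uniformly bounded linear part, so $\mathcal{E}(F)$ is a compact subset of $\mathbb{R}^6$ and it suffices to control its upper box dimension. For each $V \in G(2,1)$ with unit vector $v$, applying $P_V$ to the containment $g(F) \subseteq F$ yields
\[
\|A^T v\| \cdot P_{V'} F + P_V t \;\subseteq\; P_V F, \qquad V' := \mathrm{span}(A^T v),
\]
so every $g$ induces, for each $V$, a one-dimensional affine embedding of $P_{V'} F$ into $P_V F$ with scaling $\|A^T v\|$ and translation $P_V t$.

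The hypotheses on $\mu$ are used to rigidify these 1D embeddings \emph{in every} direction. The maximality $\dim \mu = \dim F$ combined with the no-1-slicing hypothesis implies, via Marstrand-type projection theorems for self-similar measures (in the spirit of Proposition \ref{Theorem saturation for self similar measures}), that $\dim P_V \mu = \min(\dim F, 1)$ for every $V \in G(2,1)$, and that the projections $P_V F$ carry enough of the self-similar multi-scale structure of $F$ to fall within the scope of 1D affine embedding rigidity (Feng--Wang \cite{feng2009structures} and its descendants). Consequently, for each $V$ the scaling $\|A^T v\|$ lies in a countable set and, for each admissible scaling, $P_V t$ lies in a 0-dimensional set. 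The no-1-slicing condition is what rules out privileged directions along which the projection would collapse to a Lebesgue-like interval (as in the example $E = C \times [0,1]$ from the introduction), which would produce a continuous family of admissible scalings.

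To conclude, note that $v \mapsto \|A^T v\|^2 = v^T A A^T v$ is a quadratic form on $\mathbb{R}^2$, so its values on three sufficiently generic unit vectors determine $AA^T$; combining the constraints of the previous paragraph over three such directions forces $A$ into a 0-dimensional set up to an orthogonal factor, which is in turn pinned down by the rotation structure of $\Phi$ (by Theorem \ref{Theorem embeddings} when $|G_\Phi| = \infty$, and directly when $|G_\Phi| < \infty$). The translation $t$ is then similarly determined up to a 0-dimensional set by combining the projected translations over these directions. The main obstacle will be establishing the 1D rigidity uniformly over \emph{all} directions $V$---not merely almost-every $V$---and extending the Feng--Wang-style embedding rigidity, which is stated for genuine self-similar sets, to the projections $P_V F$, which are in general only pseudo-self-similar. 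This is precisely where the combination of maximal dimension and the no-1-slicing hypothesis becomes essential.
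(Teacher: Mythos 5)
Your approach is genuinely different from the paper's, and unfortunately it has several gaps that I don't think can be repaired without effectively changing the argument into the one the paper actually uses.

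The paper's proof is a short convolution argument working directly in two dimensions: assume $\dim \mathcal{E}(F) > 0$, take a Frostman measure $\nu \in P(\mathcal{E}(F))$ with $\overline{\dim_e}\,\nu > \epsilon' > 0$, let $\mu$ be the maximal-dimension self-similar measure that admits no $1$-slicing, and apply the two-dimensional inverse theorem for entropy (Theorem \ref{Theorem Inverse Theorem main application}) to conclude $\overline{\dim_e}(\nu.\mu) \geq \dim F + \delta$. But $\nu.\mu$ is supported on $F$ and $\dim_B F = \dim_H F$ for self-similar $F$, a contradiction. The no-$1$-slicing hypothesis enters solely as the structural hypothesis needed for the inverse theorem to deliver an unconditional entropy increase; it is never translated into a statement about projections.

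Your plan, by contrast, reduces to one-dimensional embeddings $P_V F \hookrightarrow P_V F$ and appeals to Feng--Wang-style rigidity. Three concrete problems arise. First, the no-$1$-slicing hypothesis is a statement about \emph{conditional measures on slices} of $\mu$ along $D\Phi$-invariant lines; it does not directly yield a Marstrand-type projection theorem ($\dim P_V \mu = \min(\dim F, 1)$ for every $V$), and the example in the introduction shows that slices and projections behave quite differently (for $E = C \times [0,1]$ the vertical slices are intervals but the horizontal projection is $C$). The two notions are related but not interchangeable, and the paper never uses this hypothesis in a projection-theoretic way. Second, $P_V F$ is generally only a graph-directed attractor (Theorem \ref{Theorem farkas}) rather than a self-similar set satisfying OSC/SSC, so the Feng--Wang rigidity for homogeneous 1D IFS's does not apply as stated; you flag this yourself but there is no clear route to extend it, and for projections of self-similar sets with overlaps the analogous 1D rigidity is precisely what is \emph{conjectured}, not known. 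Third, and most fatally, your final step invokes Theorem \ref{Theorem embeddings} part (1) to pin down the orthogonal factor, but the paper proves part (1) of Theorem \ref{Theorem embeddings} \emph{using} Theorem \ref{Theorem zero dimension of self embeddings}, so this step is circular. Even if the first two gaps were fixed, you would need an independent argument here. The cleaner path is the one the paper takes: work with the $2$D inverse theorem directly and avoid the 1D reduction entirely.
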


In the above Theorem, and in this paper in general, unless stated otherwise $\dim$ refers to Hausdorff dimension. The requirement that the aformentioned measure does not admit a $1$-slicing is needed in order to apply Hochman's two-dimensional inverse theorem for entropy, which is discussed in  section \ref{Section two dimensional inv Thm}.  We note that  there is a possibility that the conclusion can  be strengthened to "$\mathcal{E}(F)$ must be countable" (this happens e.g.  under the conditions of part (1) of Theorem \ref{Theorem embeddings} as discussed earlier).  Finally, we remark that Hochman proved in \cite{hochman2016some} a similar  result for one dimensional attractors of (possibly infinite) compact IFS's.

\subsection{Affine embeddings of one self similar set into another one}

To begin our discussion of affine (not necessarily self) embeddings of self similar sets, we recall a recent Conjecture of  Feng,  Huang, and  Rao, stated in \cite{feng2014affine}. Throughout this section,  let $F$ and $E$ be two self similar sets in $\mathbb{R}^d$, generated by IFS's $\Phi = \lbrace \phi_i \rbrace_{i=1} ^l$ and $\Psi = \lbrace \psi_j  \rbrace_{j=1} ^m$, respectively. Denote, for $1\leq i \leq l$ and $1 \leq j \leq m$, the contraction ratio of $\phi_i$ by $\alpha_i \in (0,1)$ and of $\psi_j$ by $\beta_j \in (0,1)$. 
\begin{Conjecture} \cite{feng2014affine} \label{Fengs conjecture}
Suppose that $E$ and $F$ are totally disconnected and that $F$ can be affinely embedded into $E$. Then for every $1 \leq i \leq l$ there exists $t_{i,j} \in \mathbb{Q}, t_{i,j} \geq 0$ such that 
\begin{equation*}
\alpha_i = \prod_{j=1} ^m \beta_j ^{t_{i,j}} 
\end{equation*}
In particular, if $\beta_j = \beta$ for all $1\leq j \leq m$ then for every $1 \leq i \leq l$
\begin{equation*}
\frac{\log \alpha_i}{\log \beta} \in \mathbb{Q}.
\end{equation*}
\end{Conjecture}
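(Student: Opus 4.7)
The plan is to attack Feng's conjecture in two stages: first, reduce to the case that the affine embedding $g$ is a similarity; second, use the resulting family of similarity embeddings to extract algebraic relations between the contraction ratios.

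For the reduction, I would attempt to show that under appropriate separation assumptions on $\Psi$, any affine $g$ with $g(F)\subseteq E$ must be a similarity. The strategy mirrors part (1) of Theorem \ref{Theorem embeddings}: if the linear part $A$ of $g$ has distinct singular values $\sigma_1>\sigma_2$, then the images $g\circ \phi_I(F)\subseteq E$ are anisotropic copies of $F$ whose preferred stretching direction is essentially fixed across all multi-indices $I$. One then applies Hochman's two-dimensional inverse theorem (Section \ref{Section two dimensional inv Thm}) to a self-similar measure of maximal dimension on $F$ pushed forward by $g$, and compares against a maximal-dimension self-similar measure on $E$. The push-forward inherits a nontrivial $1$-slicing structure from $A$, whereas the measure on $E$ does not admit a $1$-slicing under the hypotheses of Proposition \ref{Theorem saturation for self similar measures}. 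This incompatibility should force $\sigma_1=\sigma_2$, i.e., $g$ is a similarity.

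Once $g$ is a similarity of contraction ratio $c$, the family $\{g\circ\phi_I\}_I$ yields similarity embeddings of $F$ into $E$ at scales $c\cdot\alpha_I$. Using total disconnectedness of $E$, for $|I|$ large each such image is concentrated in a single cylinder $\psi_J(E)$ of diameter comparable to $c\alpha_I$. Composing with $\psi_J^{-1}$ produces a new similarity embedding of $F$ into $E$ at ratio $c\alpha_I/\beta_J$. Iterating this renormalization generates an infinite collection of similarity embeddings whose contraction ratios lie in the multiplicative group generated by $c,\alpha_1,\ldots,\alpha_l,\beta_1^{-1},\ldots,\beta_m^{-1}$. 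A rigidity result in the spirit of Elekes--Keleti--M\'ath\'e should force the set of such achievable ratios to be locally discrete away from $0$, and then a pigeonhole argument on logarithms gives that $\log\alpha_i$ lies in the $\mathbb{Q}$-span of $\log\beta_1,\ldots,\log\beta_m$. Using that every $\alpha_i,\beta_j\in(0,1)$, one refines this to non-negative rational exponents.

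The principal obstacle is the reduction step. When $F=E$, iterating $g$ produces additional tangent structure that drives the similarity conclusion, but when $F\neq E$ no such iteration is available and the inverse-theorem approach sketched above requires delicate control of the dimension of $g\mu$ inside $E$: in particular one needs that the push-forward attains the maximal dimension in $E$, which is not automatic for a non-similarity $g$. A secondary difficulty is the quantitative scale-matching in the renormalization step: total disconnectedness guarantees the gaps needed to align $c\alpha_I$ with some $\beta_J$, but upgrading this into a uniform matching across all levels for an inhomogeneous $\Psi$ appears to require Assouad-type tangent analysis or stronger separation. For these reasons I would expect a first realistic target to be the similarity case of the conjecture, with the full affine reduction remaining substantially harder and very likely demanding genuinely new tools.
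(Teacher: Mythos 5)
What you were asked to address is Conjecture \ref{Fengs conjecture}, which the paper does \emph{not} prove: it is an open conjecture of Feng, Huang and Rao, and the paper supplies only partial evidence, namely Theorem \ref{Theorem one into one}, which establishes the conclusion when $F$ supports a maximal-dimension self similar measure admitting no $1$-slicing and $\dim E - \dim F$ is sufficiently small. Comparison must therefore be against that partial argument. The paper's route is genuinely different from yours: it never tries to reduce a general affine $g$ to a similarity. Instead Theorem \ref{Theorem contiouty of dimesnion of embeddings} bounds $\dim\mathcal{E}(F,E)$ uniformly for $\dim E - \dim F$ small, and then, assuming $\frac{\log\alpha_i}{\log\beta}\notin\mathbb{Q}$, the restrict--map--rescale procedure (Lemma \ref{Lemma Restrict map rescale}) applied to $g\circ\phi_{1^n}$ produces embeddings $\psi_{I_n}^{-1}\circ g\circ\phi_{1^n}$ whose operator norms fill an interval, giving $\dim\mathcal{E}(F,E)\geq 1$, a contradiction. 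The similarity reduction is bypassed entirely.

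Your stage-one reduction has a concrete gap that you yourself flag but do not repair. You claim the push-forward $g\mu$ of a maximal-dimension measure on $F$ inherits a $1$-slicing structure from $A$ that is incompatible with $E$. This is not a theorem, and it does not follow from Proposition \ref{Theorem saturation for self similar measures}, which governs $1$-slicings of self similar measures, not of affine images of such measures sitting inside a different self similar set; indeed $g\mu$ need not have maximal dimension in $E$, as you note. In the self-embedding case the paper closes the analogous step by iterating $g$ (so that $g^n(F)\subseteq F$) and rescaling, but with $F\neq E$ there is nothing to iterate, which is precisely why the paper switches to the direct dimension bound on $\mathcal{E}(F,E)$. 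Your stage two also asserts a local discreteness of achievable similarity ratios that is not available without first proving something essentially equivalent to the conjecture. As written, your strategy yields at most the known similarity case; the paper's argument obtains the partial result of Theorem \ref{Theorem one into one}, and the full conjecture remains open to both approaches.
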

Note that these arithmetic conditions on $\alpha_i,\beta_j$ hold true when $\Phi$ and $\Psi$ satisfy the strong separation condition, and $E$ and $F$ are Lipschitz equivalent, by the results of Falconer and Marsh \cite{falconer1992lipschitz}. However, for Lipschitz embeddings no arithmetic conditions are required. Indeed,  in  \cite{deng2011bilipschitz}, Deng, Wen, Xiong, and Xi proved that if $F$ and $E$ are attractors of IFS's $\Phi$ and $\Psi$ that satisfy the strong separation condition and $\dim F < \dim E$ then $F$ can be Lipschitz embedded into $E$.

Recent works provide strong evidence for this Conjecture's validity. Indeed, Feng,  Huang, and  Rao, proved it assuming $E$ admits an IFS with uniform contractions and the SSC, and $\dim E<\frac{1}{2}$. Later on, in \cite{algom2016affine}, we showed it to be true for $d=1$ assuming $\dim E - \dim F < \delta$ where $\delta = \delta (\dim F)>0$, and that $E$ admits an IFS with uniform contractions and the SSC. A major breakthrough for the case $d=1$ was obtained independently by Shmerkin \cite{shmerkin2016furstenberg} and Wu \cite{wu2016proof}. Shmerkin  proved the Conjecture holds assuming $E$ admits an homogeneous generating IFS with the OSC and $\dim E < 1$. Wu  proved the conjecture under almost the same assumptions, except for requiring that $E$ admits a generating IFS with the SSC. In the non-homogeneous setting there is the recent work of Feng and Xiong \cite{feng2016affine}, who proved some cases of the conjecture when $E$ does not have a uniform contraction, and has small dimension in a manner depending on these contractions.

We proceed to state results further supporting this conjecture. For $F$ and $E$, two self similar sets in $\mathbb{R}^2$, let $\mathcal{E}(F,E)$ denote the set of affine embeddings of $F$ into $E$. That is,
\begin{equation} \label{Equation E(F,E)}
\mathcal{E}(F,E)=\lbrace g:\mathbb{R}^2\rightarrow \mathbb{R}^2 : g \text{ is affine  and } g(F)\subseteq E \rbrace.
\end{equation}
Notice that we may always assume $F,E \subseteq [0,1]^2$. This does not make our results less general, since for sets $F,E$  in the plane we can always dilate and translate them to obtain sets $F',E'$ in the unit cube, with every embedding of $F$ into $E$ inducing an embedding of $F'$ into $E'$, and vice versa.

The following Theorem shows that if $\mathcal{E}(F,E) \neq \emptyset$, then there should be some compatibility between generating IFS's for these sets, not only for the contractions but also for the rotations  involved.  

\begin{theorem} \label{Theorem one into one}
Assume   $0<\dim F<2$, and that $E$ is generated by $\Psi$, an IFS with the SSC and a uniform contraction ratio $\beta$. Suppose that one of the following conditions holds:
\begin{enumerate}
\item \label{Theorem evidence for Feng in higher dimenion} $F$  supports a self similar measure of maximal dimension  that does not admit a $1$-slicing, and there exists some $\alpha_i$ such that $\frac{\log \alpha_i}{\log \beta} \notin \mathbb{Q}$.

\item \label{Theorem evidence B} $\Phi$ satisfies $|G_\Phi|=\infty$, has the OSC,  and  $|G_\Psi|<\infty$.

\item \label{Theorem evidence C} $\Phi$ satisfies $|G_\Phi|<\infty$ and  $F$ supports a self similar measure of maximal dimension  that does not admit a $1$-slicing.  In addition, all the maps in $\Psi$ have the same linear part and $|G_\Psi|=\infty$.
\end{enumerate}
Then there exists $\delta=\delta( F)$ such that if $\dim E - \dim F < \delta$ then $\mathcal{E}(F,E) =\emptyset$.

In particular, if $F$  supports a self similar measure of maximal dimension  that does not admit a $1$-slicing, then Conjecture \ref{Fengs conjecture} holds for $F$ and $E$ whenever $\dim E - \dim F <\delta$, for some $\delta(F)>0$.
\end{theorem}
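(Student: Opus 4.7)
The plan is to argue by contradiction: assume $g \in \mathcal{E}(F,E)$ and push forward the maximal-dimensional self-similar measure $\mu$ on $F$ to obtain $\nu := g_*\mu$, a probability measure on $E$ with $\dim \nu = \dim F$ (since $g$ restricted to $F$ is bi-Lipschitz). From $\mu = \sum_{i=1}^l p_i \phi_i \mu$ we inherit the intrinsic self-similarity $\nu = \sum_{i=1}^l p_i (g \phi_i g^{-1})_* \nu$, in which each $g \phi_i g^{-1}$ retains the contraction ratio $\alpha_i$ of $\phi_i$. The goal, in each of the three cases, is to show that this intrinsic multiscale structure of $\nu$ clashes with the extrinsic $\Psi$-cylinder structure on $E$ at scales $\beta^n$, producing an entropy excess that forces $\dim E \geq \dim F + \delta(F)$ for some $\delta(F) > 0$ depending only on $F$.

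The main workhorse is Hochman's two-dimensional inverse theorem for entropy, discussed in Section \ref{Section two dimensional inv Thm}. For case (\ref{Theorem evidence for Feng in higher dimenion}), the incommensurability $\log \alpha_i / \log \beta \notin \mathbb{Q}$ makes the scales generated by iterating $\Phi$ arithmetically dispersed with respect to $\{\beta^n\}$; together with the no-$1$-slicing hypothesis --- which, via Proposition \ref{Theorem saturation for self similar measures}, rules out the saturation of $\nu$ --- this dispersion triggers an entropy-increase step: convolutions of dyadic components of $\nu$ with appropriate scaled copies strictly grow entropy at scale $\beta^n$, and iterating across scales yields the required dimension gap. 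In case (\ref{Theorem evidence B}), the infinite group $G_\Phi$ produces iterates of $\Phi$ whose orthogonal parts populate a dense subset of a nontrivial closed subgroup of $O(\mathbb{R}^2)$; transporting these by $g$ yields rotated copies of $\nu$ at arbitrarily many distinct orientations inside $E$, orientations the finite $G_\Psi$ cannot accommodate within the $\Psi$-cylinders at scale $\beta^n$ without inflating their count, and Hochman's inverse theorem converts this obstruction into a positive dimension gap. Case (\ref{Theorem evidence C}) is dual, with the roles of $\Phi$ and $\Psi$ exchanged and with the assumed homogeneity of the linear parts of $\Psi$ compensating for the finiteness of $G_\Phi$.

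The principal obstacle will be making each of these entropy-increase steps uniform and quantitative, so that $\delta = \delta(F)$ depends only on $F$ and not on the unknown map $g$ or the target $E$. This requires a lower entropy bound on $\nu$ at dyadic scales (from the maximality $\dim \mu = \dim F$), an upper cylinder-counting bound on $E$ at scales $\beta^n$ (from the uniform contraction of $\Psi$), and a careful verification of the non-saturation hypothesis of Hochman's theorem --- where the no-$1$-slicing condition and Proposition \ref{Theorem saturation for self similar measures} enter, especially in cases (\ref{Theorem evidence B}) and (\ref{Theorem evidence C}) where the obstruction is rotational rather than arithmetic. Finally, the ``in particular'' conclusion is immediate from case (\ref{Theorem evidence for Feng in higher dimenion}): if $\log \alpha_i / \log \beta \notin \mathbb{Q}$ for some $i$, then case (\ref{Theorem evidence for Feng in higher dimenion}) gives $\mathcal{E}(F,E) = \emptyset$ whenever $\dim E - \dim F < \delta(F)$, which is precisely the contrapositive of Conjecture \ref{Fengs conjecture} in the homogeneous target setting.
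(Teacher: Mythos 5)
Your proposal diverges from the paper's strategy in a way that leaves a genuine gap. The paper does not analyze the pushforward measure $\nu = g_*\mu$ on $E$ directly; its central move is to use the ``restrict--map--rescale'' heuristic to manufacture from a single embedding $g$ a \emph{one-parameter family} of embeddings $\psi_{I_n}^{-1}\circ g\circ\phi_{1^n}$ whose closure $\Sigma'$ in $G_2$ has Hausdorff dimension at least $1$. The incommensurability in case (1) (and the irrational rotations in cases (2)--(3)) are used precisely to show that $\dim\Sigma'\geq 1$: in case (1) the norms of the rescaled embeddings fill an interval, in cases (2) and (3) the orientations do. Only then does the inverse theorem enter, via an intermediate result (Theorem~\ref{Theorem contiouty of dimesnion of embeddings}): one picks a measure $\nu\in P(\Sigma')$ with $\overline{\dim_e}\,\nu\geq 1$, and the convolution $\nu.\mu$ is a measure on $E$ of entropy dimension $\geq\dim F+\delta$, contradicting $\dim_B E=\dim_H E<\dim F+\delta$. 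Your proposal never produces any set of embeddings, let alone a positive-dimensional one, and thus never sets up the convolution $\nu.\mu$ that the inverse theorem (Theorem~\ref{Theorem Inverse Theorem main application}) actually addresses.

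Two further problems with the proposed route. First, the claim that $g\phi_i g^{-1}$ ``retains the contraction ratio $\alpha_i$'' is not correct when $g$ is a general affine map rather than a similarity: the linear part of $g\phi_i g^{-1}$ is $\alpha_i A O_i A^{-1}$, whose eigenvalues have modulus $\alpha_i$ but whose operator norm and contraction behavior depend on the eigenbasis of $A$; the maps $g\phi_i g^{-1}$ need not be similarities at all, so $\nu$ is not a self-similar measure in the usual sense, and the ``intrinsic multiscale structure'' of $\nu$ does not align with dyadic scales in any direct way. Second, phrases such as ``convolutions of dyadic components of $\nu$ with appropriate scaled copies strictly grow entropy'' and ``the finite $G_\Psi$ cannot accommodate $\ldots$ without inflating their count'' describe the flavor of entropy-increase arguments but do not identify a measure on $G_2$ of positive entropy dimension with which to convolve, nor verify the non-saturation hypothesis on anything but $\mu$ itself. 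To make this work you would essentially have to reconstruct the paper's argument: isolate the family $\Sigma'$ of rescaled embeddings, show $\dim\Sigma'\geq 1$ (this is where the arithmetic/rotational obstructions actually bite), put a $\geq 1$-entropy-dimension measure on it, and then invoke Theorem~\ref{Theorem contiouty of dimesnion of embeddings}. Your treatment of the ``in particular'' clause as a consequence of case (1) is correct.
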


Each one of the conditions stated in Theorem \ref{Theorem one into one} ensures that we can apply the 2-dimensional inverse Theorem for entropy (see Section \ref{Section two dimensional inv Thm}). It is clear that conditions (2) and (3) are mutually exclusive. Moreover, by Proposition \ref{Theorem saturation for self similar measures} and the OSC, condition (2) ensures that $F$ supports a self similar measure of maximal dimension  that does not admit a $1$-slicing.

Note that in condition (3) of Theorem \ref{Theorem one into one}, we cannot relax the assumption that all the maps in $\Psi$ have the same linear part. Indeed, let $\Phi$ be the usual generating IFS for the product set $F = C\times C$ where $C$ is the middle thirds Cantor sets. Since $\Phi$ has the SSC, by Proposition \ref{Theorem saturation for self similar measures} we see that $F$ supports a self similar measure of maximal dimension that does not admit a $1$-slicing. Let $R\in SO(\mathbb{R}^2)$ be an irrational rotation, and let $\Psi_k = \Phi\cup \lbrace z\mapsto \frac{1}{3^k}\cdot R \cdot z + (\frac{1}{2},\frac{1}{2})\rbrace$, and let $E_k$ be its attractor. Assuming  $k>3$, $\Psi_k$ has the SSC, and by taking $k$ large we can make $\dim E_k$ arbitrarily close to $\dim F$. However, $F\subset E_k$, but $G_\Phi$ is finite and $G_{\Psi_k}$ is infinite.

\subsection{Methods and tools}
 The following sections contain the various statements of the inverse Theorems for entropy we will be using in this paper, and roughly how they configure in our proofs. 
 \subsubsection{One dimensional inverse Theorem for entropy}
Let $\mu$ be a probability measure on $\mathbb{R}$; for the definition of the upper and lower entropy dimension of $\mu$, $\overline{\dim_e} \mu, \underline{\dim_e} \mu,$ as well as the definition of the Hausdorff dimension of $\mu$, $\dim \mu$, see  the preliminaries section. Denote the set of invertible affine maps $\mathbb{R}\rightarrow \mathbb{R}$ by $G_1$, that is,
\begin{equation} \label{equation for G1}
G_1 = \lbrace g:\mathbb{R} \rightarrow \mathbb{R}, \quad g(x)=a\cdot x + b, \quad a \neq 0,b\in \mathbb{R} \rbrace.
\end{equation}
Thus $G_1$ can be naturally identified with an open subset of $\mathbb{R}^2$. For a set $X\subset \mathbb{R}^d$, $P(X)$ denotes the set of Borel probability measures supported on $X$.  For any $\theta \in P(G_1)$ and $\mu \in P(\mathbb{R})$ that are compactly supported, define the convolution measure $\theta . \mu \in P(\mathbb{R})$ as the push-forward of the measure $\theta \times \mu$ on $G_1 \times \mathbb{R}$ via the map $f:G_1 \times \mathbb{R} \rightarrow \mathbb{R}$ defined by $f(\phi,x) = \phi (x)$.

\begin{theorem} \cite{hochman2016some} \label{Theorem 4.2}
Let $s\in (0,1)$, then there exists some $\delta=\delta(s) >0$ such that:

Let $\mu \in P(\mathbb{R})$ and $\nu \in P(G_1)$ be compactly supported measures. Suppose that $\mu$ is a  self similar measure of maximal dimension with respect to some IFS that satisfies the OSC, and that its attractor $F$ satisfies $0 < \dim_H F \leq  s $. Suppose in addition that $\overline{\dim}_e \nu \geq  1$.  Then
\begin{equation*}
\overline{\dim}_e \nu. \mu \geq \dim_H F + \delta.
\end{equation*} 
\end{theorem}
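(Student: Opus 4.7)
The plan is to apply Hochman's inverse theorem for entropy to the convolution $\nu.\mu$. The target is to exhibit a constant $\delta=\delta(s)>0$ and a positive-density set of dyadic scales $n$ at which the normalized entropy $\frac{1}{n}H(\nu.\mu,\mathcal{D}_n)$ exceeds $\dim_H F$ by at least $\delta$; averaging these gains across scales yields $\overline{\dim}_e\nu.\mu\geq \dim_H F+\delta$.

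First I would linearize the affine action. Writing $\phi=(a,b)\in G_1$ with $\phi(x)=ax+b$, one has
\[
\nu.\mu \;=\; \int \phi_*\mu\,d\nu(\phi) \;=\; \int (S_a\mu)*\eta_a\,d\pi(a),
\]
where $\pi$ is the $a$-marginal of $\nu$, $S_a$ denotes scaling by $a$, and $\eta_a$ is the conditional distribution of the translation coordinate $b$ given $a$. For $\pi$-a.e.\ $a$, the inner convolution is an ordinary additive convolution on $\mathbb{R}$, to which Hochman's 1D inverse theorem \cite{hochman2014self} applies: given $\epsilon>0$ there is $\delta_0>0$ such that whenever $\frac{1}{n}H(\alpha*\beta,\mathcal{D}_n)\leq \frac{1}{n}H(\alpha,\mathcal{D}_n)+\delta_0$, then for a $(1-\epsilon)$-fraction of scales $k\leq n$, either a typical level-$k$ component of $\alpha$ is $\epsilon$-close to uniform on its dyadic cell, or a typical level-$k$ component of $\beta$ is $\epsilon$-atomic.

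The heart of the argument is to rule out both alternatives. For the first: since $\mu$ is a self-similar measure of maximal dimension on an OSC attractor of dimension $\leq s<1$, by Feng--Hu exact dimensionality and the scaling property of self-similar measures (see \cite{hochman2014self}) the typical level-$k$ components of $\mu$ (and hence of the rescaled measure $S_a\mu$) are, up to controlled distortion, affine copies of $\mu$ restricted to cylinders, with dimension $\leq s$. Choosing $\epsilon$ small enough in terms of $1-s$, no such component can be $\epsilon$-uniform at any scale. For the second: the hypothesis $\overline{\dim}_e\nu\geq 1$ inside $G_1\subset\mathbb{R}^2$, combined with the trivial bound $\overline{\dim}_e\pi\leq 1$ (because $\pi$ lives in a bounded subset of $\mathbb{R}$), forces via the entropy disintegration formula that the conditional measures $\eta_a$ have $\overline{\dim}_e\eta_a>0$ for a $\pi$-positive proportion of $a$, with a quantitative lower bound at a positive density of scales; in particular their level-$k$ components cannot be $\epsilon$-atomic too often.

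Together these two exclusions contradict the dichotomy unless the entropy gain at scale $n$ is at least $\delta_0(s)$; integrating over $a$ and applying concavity of entropy transfers the gain from $(S_a\mu)*\eta_a$ to $\nu.\mu$ at the same scales, producing the required $\delta=\delta(s)>0$. The main obstacle is the second rule-out: transferring a global limit-superior statement $\overline{\dim}_e\nu\geq 1$ in $\mathbb{R}^2$ into a lower bound on the per-scale entropy of the conditionals $\eta_a$ for many $a$ simultaneously. This requires a quantitative Fubini for dyadic entropy, exploiting that $\pi$ is supported in a bounded subset of $\mathbb{R}\setminus\{0\}$ so its dyadic entropy at scale $k$ is at most $k+O(1)$, which forces the excess of $H(\nu,\mathcal{D}_k^{(2)})$ over $H(\pi,\mathcal{D}_k)$ to localize as $\eta_a$-entropy at a positive density of scales.
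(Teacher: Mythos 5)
This theorem is not proved in the paper; it is quoted verbatim from Hochman's work \cite{hochman2016some}, so there is no ``paper's own proof'' to compare against. Evaluating the proposal on its own terms, it contains a genuine and fatal gap.

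The gap is in the second ``rule-out.'' You claim that $\overline{\dim}_e\nu\geq 1$, together with $\overline{\dim}_e\pi\leq 1$, forces the conditionals $\eta_a$ to have positive entropy dimension for a $\pi$-positive proportion of $a$. This is false. Take $\nu$ to be Lebesgue measure on the line segment $\{(a,0):a\in[1/2,1]\}\subset G_1$, or more generally on the graph of any Lipschitz function $a\mapsto b(a)$. Then $\overline{\dim}_e\nu=1$ (all the entropy is in the $a$-coordinate) while $\eta_a=\delta_{b(a)}$ is a point mass for every $a$. The entropy disintegration formula does not help: the conditional entropy $H(\nu,\mathcal D_n^{(2)}\mid\mathcal D_n^{(1)})$ stays bounded in $n$ for such $\nu$, so there is no leftover to push into the translation coordinate. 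In that case each inner measure $(S_a\mu)*\eta_a=S_a\mu+b(a)$ is not a genuine additive convolution, the additive inverse theorem applies vacuously, and your argument yields no entropy gain at all. Yet the theorem asserts $\overline{\dim}_e\nu.\mu\geq\dim_H F+\delta$ even here, because the gain comes from the \emph{multiplicative} mixing across different scalings $a$, which your linearization (fix $a$, apply the 1D additive inverse theorem) throws away.

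This is not a corner case: in the very application in this paper (Section 6, Step 4 and footnote~4) the measure $\nu$ is built so that its first marginal is Lebesgue on an interval, but the translation coordinate is determined as a function of the scaling coordinate — exactly the degenerate situation above. The correct tool is the inverse theorem for convolutions by measures on the affine (or linear) group itself — the analogue of the paper's quoted Theorem \ref{Theorem Inverse Theorem} / Corollary~2.15 of \cite{hochman2015self} — in which the dichotomy for $\nu$-components reads ``the component, acting on the typical point $x$, is concentrated near an affine subspace,'' rather than ``the translation conditional is atomic.'' When $\nu$ sits on a curve transverse to the constant-$a$ fibers and $x\neq 0$, the image $\nu.x$ is not concentrated, so the dichotomy forces the entropy gain. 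Your first rule-out (that components of $\mu$ cannot be $\epsilon$-uniform because $\dim\mu\leq s<1$ and $\mu$ is self-similar with OSC) is in the right spirit, but by itself it cannot rescue the argument once the second alternative is not actually excluded.
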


We shall apply Theorem \ref{Theorem 4.2} in the proof of part (2) of Theorem \ref{Theorem embeddings}. Specifically, let $F$ be a self similar set  as in Theorem \ref{Theorem embeddings} part (2), and assume $g:\mathbb{R}^2 \rightarrow \mathbb{R}^2$ is an affine self embedding. Let $A$ be the linear part of $g$.  Let $n\in \mathbb{N}$. Then $g^n$, the $n$-th iteration of $g$,   is also a self embedding of $F$. Since $\Phi$ has the SSC, $g^n(F)$ is contained within a cylinder $\phi_n (F)$ so that the two sets have comparable diameters  (the generation of $ \phi_n$  depends on the operator norm of $A$). We can now rescale $g^n$ by pre-composing with $\phi_n$, i.e. we consider the sequence of self embeddings  $\lbrace \phi_n ^{-1} \circ g^n \rbrace$.  Now, if $A$ is not diagonalizable,
by taking any converging subsequence, we show that the linear part of this subsequence of affine maps converges to a projection matrix (i.e. a matrix with one dimensional kernel).  Thus, we see that for some projection $P$, the set $P (F)$ can be affinely embedded into $F$. 

Recall that $P (F)$ is the attractor of a graph directed IFS by e.g.  the results of Farkas \cite{Farkas2014projections}. If $A$ is not diagonalizable, then in fact one can manufacture via  the  procedure we have just described, a set of dimension $1$ of embeddings of $P (F)$ into $F$, and so of $P (F)$ into $P (F)$. We obtain a contradiction via approximating $P (F)$ by a self similar set, and then applying Theorem \ref{Theorem 4.2}. We thus see that $A$ must be diagonalizable. 

By a similar argument, we see that if $A$ has two eigenvalues of different norms then the norm of the larger one is algebraically dependent on the uniform contraction of $\Phi$.  .

\subsubsection{Inverse Theorems for product measures, the weak separation condition and microset  analysis}
We continue outlining  the argument proving part (2) of Theorem \ref{Theorem embeddings} from the last section. The most difficult part of the proof is showing that if both eigenvalues of $A$ have distinct norms, then  the norm of the eigenvalue  with the smallest norm is also algebraically dependent on the uniform contraction ratio of $\Phi$. For this we require a version of the inverse Theorem for entropy that works for certain product measures. We first formulate the relevant Theorem, and then proceed to explain its role in the proof. We note that to see how this Theorem follows from Hochman's work (Corollary 2.15 in \cite{hochman2015self}) is not trivial, and is shown in section \ref{Section inverse Theorems for entropy}.

Let $F$ be a self homothetic set (i.e. it is generated by $\Phi = \lbrace \phi_i \rbrace_{i=1} ^l$ such that $G_\Phi= \lbrace \id \rbrace$) with the SSC. Let $P_1 :\mathbb{R}^2 \rightarrow \mathbb{R}$ be the coordinate projection $P_1 (x,y)=x$.   For $x\in \mathbb{R}$ let $F^x$ denote the vertical slice through $F$, i.e. 
\begin{equation*}
F^x = \lbrace (x,y): (x,y)\in F \rbrace.
\end{equation*}
Let $\mu$ be the self similar measure on $F$ with respect to the probability vector $(\alpha_1 ^{\dim F},...,\alpha_l ^{\dim F})$, where $\alpha_i$ is the contraction ratio of $\phi_i$. Then  $\dim \mu = \dim F$. Let $[x] = P_1 ^{-1} (\lbrace x \rbrace)$, and let $\lbrace \mu_{[x]} \rbrace$ denote the disintegration of $\mu$ with respect to $P_1$,  so that $P_1 \mu$ almost surely $\mu_{[x]}$  is supported on the slice $F^x$. Recall that by the dimension conservation formula for self homothetic sets with the SSC, proved  in \cite{furstenberg2008ergodic},  $P_1 \mu$ almost every $\mu_{[x]}$ is exact dimensional and
\begin{equation} \label{Equation DC 1}
\dim P_1 \mu + \dim \mu_{[x]} = \dim \mu, \text{ for } P_1 \mu \text{ almost every } x \in P_1 (F).
\end{equation}
Let
\begin{equation} \label{Equation for DC(X,P)}
\text{DC}(F,P_1)= \lbrace x\in P_1 (F): x \text{ satisfies equation } \eqref{Equation DC 1} \rbrace
\end{equation}
Define the family of product measures
\begin{equation} \label{Equation for Pro(F)}
\text{Pro}(F) = \lbrace P_1 \mu \times P_2 (\mu_{[x]}|_S) : x\in \text{DC}(F,P_1), \quad S \subseteq F^x \text{ is such that } \mu_{[x]} (S) >0 \rbrace,
\end{equation}
where $P_2 :\mathbb{R}^2 \rightarrow \mathbb{R}^2$ is the map $P_2 (x,y)= y$.

Let $G_2$ denote the space of invertible affine maps $\mathbb{R}^2 \rightarrow \mathbb{R}^2$, that is,
\begin{equation} \label{equation for G2}
G_2 = \lbrace g:\mathbb{R}^2 \rightarrow \mathbb{R}^2 : \quad g(z)=A\cdot z +t, \quad  A\in GL(\mathbb{R}^2), \quad  t\in \mathbb{R}^2 \rbrace.
\end{equation}
This set can be identified with an open subset of $\mathbb{R}^6$. For any $\theta \in P(G_2)$ and $\mu \in P(\mathbb{R}^2)$ that are compactly supported define the convolution measure $\theta . \mu \in P(\mathbb{R})$ as the push-forward of the measure $\theta \times \mu$ on $G_2 \times \mathbb{R}^2$ via the map $f:G_2 \times \mathbb{R}^2 \rightarrow \mathbb{R}^2$ defined by $g(\phi,x) = \phi (x)$.

\begin{theorem} \label{Theorem Inverse Theorem for product measures - finite group}
Let $X\subset G_2$ be a bounded set, and let $F\subseteq \mathbb{R}^2$ be a self similar set generated by an IFS $\Phi$ with a uniform contraction ratio, $|G_\Phi|< \infty$ and the SSC. Assume $\gamma\cdot (P_1 (F),0)+t \subseteq F$ for some $\gamma > 0, t\in \mathbb{R}^2$. Then there exist $\delta(F)>0$ such that:

Let $F' \subseteq F$ be a self homothetic set. Let $\theta \in \text{Pro}(F')$ be such that it is not supported on any affine line, and let $\nu \in P(X)$ be such that $\overline{\dim}_e (\nu) \geq 1$. Then 
\begin{equation*}
\overline{\dim}_e (\nu. \theta) \geq \overline{\dim}_e (\theta) +\delta.
\end{equation*} 
\end{theorem}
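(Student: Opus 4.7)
The plan is to apply Hochman's two-dimensional inverse theorem for entropy (Corollary 2.15 of \cite{hochman2015self}) to the convolution $\nu.\theta$, and rule out its structural alternative using the product structure of $\theta \in \text{Pro}(F')$ combined with the hypothesis that $\theta$ is not supported on any affine line. Suppose toward contradiction that no uniform $\delta(F)>0$ exists, so there are sequences $\nu_k \in P(X)$ with $\overline{\dim}_e \nu_k \geq 1$ and $\theta_k \in \text{Pro}(F')$ not supported on any affine line, for which $\overline{\dim}_e(\nu_k.\theta_k) - \overline{\dim}_e(\theta_k) \to 0$. The inverse theorem then forces, at a positive density set of dyadic scales $n$, that most level-$n$ components of $\theta_k$ are close to measures concentrated on an affine line at scale $2^{-n}$, in the quantitative sense of ``entropy close to an integer'' together with geometric line concentration.

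Before invoking the inverse theorem I would establish the regularity of measures in $\text{Pro}(F')$. Since $F'$ is self-homothetic with the SSC, $P_1\mu$ is a self-similar measure on the self-similar set $P_1(F')$ and is exact dimensional; the dimension conservation formula \eqref{Equation DC 1} records that $\mu_{[x]}$, and hence $P_2(\mu_{[x]}|_S)$, is exact dimensional for $P_1\mu$-a.e.\ $x \in \text{DC}(F',P_1)$. Thus $\theta = P_1\mu \times P_2(\mu_{[x]}|_S)$ is itself exact dimensional with $\dim\theta = \dim P_1\mu + \dim P_2(\mu_{[x]}|_S)$, and its dyadic components $\theta_{z,n}$ at $\theta$-typical points $z$ are, up to normalization and exponentially small error, products of dyadic components of the two one-dimensional factors.

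With this regularity in hand, the failure assumption combined with the inverse theorem yields that, at a positive density of scales $n$, a significant proportion of components $\theta_{z,n}$ are concentrated on some affine line $\ell_{z,n}$ at scale $2^{-n}$. Because $\theta_{z,n}$ is essentially a product of one-dimensional components of $P_1\mu$ and of $P_2(\mu_{[x]}|_S)$, concentration on an affine line at scale $2^{-n}$ forces at least one of the two factor components to have normalized entropy tending to zero, i.e., to be essentially a point mass. Passing to weak-$*$ limits of microsets along a subsequence of scales, this propagates to the assertion that one of $P_1\mu$ or $P_2(\mu_{[x]}|_S)$ has entropy dimension zero, hence is a point mass, so $\theta$ would be supported on a horizontal or vertical affine line, contradicting the hypothesis.

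The main obstacle is the final implication: extracting from Hochman's quantitative ``concentration on some affine line $\ell_{z,n}$ at many scales'' the qualitative statement that \emph{one fixed marginal} of $\theta$ is degenerate. The lines $\ell_{z,n}$ a priori vary with $z$ and $n$, and a product component can appear affinely concentrated without either factor being small in a given direction. This is where the assumption $\gamma \cdot (P_1(F), 0) + t \subseteq F$ is used: it places a controlled scaled copy of the horizontal projection inside $F$, so that the geometry of components of measures on $F$ is coordinated with the horizontal/vertical splitting. Together with the uniform contraction ratio and the finiteness of $G_\Phi$, which synchronize the scales and orientations of dyadic components of measures supported on $F'\subseteq F$, this rigidity forces the lines $\ell_{z,n}$ to be uniformly horizontal or vertical on a definite proportion of scales, which upgrades to the global degeneracy of one marginal and completes the contradiction.
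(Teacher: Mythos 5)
There is a genuine gap: you have misread the structural alternative in Hochman's two-dimensional inverse theorem. The statement (the paper's Theorem 7.2, i.e.\ Corollary 2.15 of \cite{hochman2015self}) says that, if entropy does not increase, then the components of the \emph{convolving} measure $\nu$ are $(V_i,\epsilon)$-concentrated near translates of the lines $V_i$, while the components of $\theta$ are $(V_i,m,\epsilon)$-\emph{saturated}. Saturation is the dual notion: it says the component has essentially maximal entropy given its projection to $V_i^\perp$, and in particular forces one of the one-dimensional factors of the product component to have nearly \emph{full} normalized entropy, not entropy tending to zero. Your proposal argues from ``$\theta$-components concentrated on lines, hence a factor is nearly atomic'', which is the wrong sign; this would not yield a contradiction in any case, since a measure of intermediate dimension can easily have components concentrated near lines.

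The second issue is your resolution of the ``which line'' problem. You flag that the lines $\ell_{z,n}$ vary with $z,n$, and you propose to use $\gamma\cdot(P_1(F),0)+t\subseteq F$ together with $|G_\Phi|<\infty$ to argue that the lines must be horizontal or vertical. The paper does not need any such geometric rigidity, and I do not see how to make it work: the direction of the lines is controlled purely by the \emph{measure} $\theta$, not by the ambient set $F$. Instead, the paper proves an elementary entropy inequality for projections of product measures (Lemma 7.1(3)): for any line $V\in G(2,1)$, one has $H_m(P_{V^\perp}(\mu_1\times\mu_2))\geq H_m(\mu_1)+O(1/m)$ or $\geq H_m(\mu_2)+O(1/m)$, with error independent of $V$. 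Plugging this into the saturation inequality kills the direction-dependence entirely: regardless of $V_m$, one factor component must have entropy $\geq 1-O(1/m)-\epsilon$, and since that component is supported on a mini-set in the Furstenberg gallery $G_{P_1(F)}$ or $G_{F,e_2}$, this contradicts the fact that both galleries have dimension strictly below $1$. The hypothesis $\gamma\cdot(P_1(F),0)+t\subseteq F$ is used for that last fact (via the WSC / Furstenberg-homogeneity argument of Lemma 6.9), not for any line-orientation rigidity. You would need to replace both the concentration/saturation confusion and the rigidity argument with the product-measure entropy inequality and the gallery dimension bound to obtain a correct proof.
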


Let $g$ be  a self embedding of $F$  with linear part $A$, as in the previous section. Denote by $\gamma_1,\gamma_2$ the eigenvalues of $A$, and suppose that $|\gamma_1| > |\gamma_2|$. Denoting the uniform contraction ratio of $\Phi$ by $\lambda$, we can use Theorem \ref{Theorem 4.2} as explained before to show that $\frac{\log |\gamma_1|}{\lambda}\in \mathbb{Q}$. We aim at showing that this is also true for $|\gamma_2|$. Note that by using the iteration and rescaling procedure outlined in the previous section,  we see that there is an affine projection mapping $F$ into itself. Without the loss of generality, assume this projection is $P_1$. We now restrict our discussion to self homothetic sets, so that now projections of $F$ are themselves self similar sets;   the finite group case follows from the self homothetic case by an approximation argument. 

Let $x\in DC(F,P_1)$ (recall its definition from \eqref{Equation for DC(X,P)}). The general strategy of the proof is to show that $F$ must contain $Y$, an affine image of a product set of the form $P_1 (F) \times P_2 (S)$, where $S \subseteq F^x$. This is done by "blowing up" balls in $g^n (F)$ of radius $|\gamma_2|^{-n}$ around a point in this slice $F^x$, and analysing their Hausdorff-metric limits as $n\rightarrow \infty$ (these limits  are called microsets). If $\frac{\log |\gamma_2|}{\log \lambda} \notin \mathbb{Q}$ we use a similar argument to find that $\dim \mathcal{E}(Y,F) \geq 1$ (recall the definition of $\mathcal{E}(F,E)$ from \eqref{Equation E(F,E)}).

To get a contradiction, we argue that we can find such a set $S \subseteq F^x$ that satisfies $\mu_{[x]} (S)>0$. If this can be done, we obtain our contradiction by applying Theorem \ref{Theorem Inverse Theorem for product measures - finite group}. The key observation for the construction of such a set  $S$ is that the self similar set $P_1 (F)$ has the weak separation condition. This separation condition, originally introduced by Lau and Ngai in \cite{Lau1999weak} as a weaker version of the open set condition, allows us to find a nice finite partition of the slice $F^x$. The set $S$ can be chosen as one of the sets in this partition.

To see why $P_1 (F)$ has the Weak separation condition, we make use of the Fraser-Henderson-Olson-Robinson dichotomy \cite{Fraser2015Assouad} for real self similar sets. This dichotomy roughly states that if the Assouad dimension of $P_1 (F)$ is smaller than $1$ then $P_1 (F)$ has the weak separation condition. In order to bound the Assouad dimension of $P_1 (F)$, we find a uniform bound on the  Hausdorff dimension of any microset of $P_1 (F)$. This is may be done  because $P_1 (F)$ embeds into a  slice of $F$, and since the original IFS $\Phi$ has the strong separation condition (see Corollary \ref{Corollary SSC implies dimension gap for slices}).

\subsubsection{Two dimensional inverse Theorem for entropy} \label{Section two dimensional inv Thm}

Recall that  a $1$-slicing of a self similar measure  was defined in the discussion preceding Theorem \ref{Theorem saturation for self similar measures}.

\begin{theorem} \cite{hochman2015self}  \label{Theorem Inverse Theorem main application}
Let $X\subseteq G_2$ be a bounded set, and let $\nu \in P(X)$ and $\mu \in P(\mathbb{R}^2)$ be compactly supported measures, and suppose $\overline{\dim}_e (\nu) \geq 1$. Suppose that $\mu$ is a self similar measure such that $0<\dim \mu <2$ and that $\mu$  does not admit a $1$-slicing.
Then there exists some $\epsilon (\mu,X)= \epsilon>0$ such that for every $\epsilon' \leq \epsilon$ there is a  $\delta(\epsilon')=\delta>0$ such that 
\begin{equation*}
\overline{\dim_e} (\nu) > \epsilon' \Rightarrow \overline{\dim_e} (\nu.\mu) \geq \overline{\dim_e} (\mu) + \delta
\end{equation*}
\end{theorem}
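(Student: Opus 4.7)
The plan is to deduce this statement from Hochman's general two-dimensional inverse theorem for convolution entropy in \cite{hochman2015self}, combined with the self-similar structure of $\mu$ and the hypothesis that $\mu$ admits no $1$-slicing. The threshold $\epsilon=\epsilon(\mu,X)$ will be read off from the inverse theorem, and the gain $\delta=\delta(\epsilon')$ will emerge by contradiction. Fix $\epsilon' \leq \epsilon$ and suppose toward a contradiction that no uniform $\delta > 0$ works; then there exist a sequence $\delta_k \to 0$ and measures $\nu_k \in P(X)$ with $\overline{\dim_e}(\nu_k) > \epsilon'$ but $\overline{\dim_e}(\nu_k . \mu) < \overline{\dim_e}(\mu) + \delta_k$. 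At the level of dyadic scale-$2^{-n}$ entropies this means $\tfrac{1}{n}(H_n(\nu_k.\mu) - H_n(\mu)) < \delta_k$ for arbitrarily large $n$.

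Feed this near-critical convolution into the $\mathbb{R}^2$ inverse theorem (e.g.\ Corollary 2.15 of \cite{hochman2015self}). The conclusion is a dichotomy on a $(1-\tau(\delta_k))$-density set of scales $j \in \{1,\ldots,n\}$: either a typical level-$j$ component of $\nu_k$ is $(\epsilon,j)$-concentrated near an atom, or a typical level-$j$ component of $\mu$ is $(\epsilon,j)$-concentrated on an affine line in $\mathbb{R}^2$. The bound $\overline{\dim_e}(\nu_k) > \epsilon'$, with $\epsilon'$ chosen strictly less than the $\epsilon$ coming out of the inverse theorem, rules out the atomic alternative for $\nu_k$ on all but a vanishing fraction of scales. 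Hence at a positive-density set of scales the components of $\mu$ must be approximately concentrated on an affine line; call such a direction a \emph{saturation direction} and let $L \subseteq G(2,1)$ be the (limiting) set of such directions.

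The central obstacle is to upgrade this scale-by-scale line concentration of $\mu$ into a genuine $1$-slicing in the sense of the definition preceding Proposition \ref{Theorem saturation for self similar measures}, thereby contradicting the hypothesis. The input here is that for a self-similar $\mu$, a typical level-$j$ component is, modulo affine rescaling, the pushforward $O \mu$ for some $O \in G_\Phi$, the distribution being governed by the Bernoulli shift on the symbolic coding of $\mu$. Therefore $L$ is invariant under the $G_\Phi$-action on $G(2,1)$. If the $G_\Phi$-orbit closure of $L$ is all of $G(2,1)$, then line concentration holds in a dense family of directions; a Marstrand-type projection-slicing argument then forces $\dim \mu = 2$, contradicting $\dim \mu < 2$. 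In the complementary case $L$ is contained in a finite $G_\Phi$-orbit and in particular contains at least one $D\Phi$-invariant line $V$; disintegrating $\mu$ along $P_{V^\perp}$ and combining the saturation at typical components with a Furstenberg-style dimension-conservation argument, the conditional measures $\mu_{[x]}$ on translates of $V$ are almost surely exact-dimensional of dimension $1$. This is precisely a $1$-slicing, contradicting the standing hypothesis, so the required $\delta > 0$ must exist. The hardest part of the argument, by far, is this last passage from ``$\mu$ is approximately line-concentrated at many scales'' to ``$\mu$ has a genuine $1$-slicing'', as it is here that the fine self-similar structure of $\mu$ and the ergodic-theoretic control of the distribution of its components must be brought to bear in full.
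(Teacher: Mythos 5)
The paper itself does not give a proof of this statement: the remark following Theorem \ref{Theorem Inverse Theorem} simply says it "is a consequence of Theorem \ref{Theorem Inverse Theorem}. This is essentially the proof of Theorem 1.5 in \cite{hochman2015self}, as appearing in section 6 there," and refers the reader to Hochman's paper. Your sketch follows the same general strategy as the cited argument (negate, feed the near-critical convolution into Hochman's Corollary 2.15, use $\overline{\dim}_e\nu$ to push the resulting subspaces to dimension $\geq 1$, pull the saturation data back through the self-similar structure of $\mu$, and contradict the no-$1$-slicing hypothesis), but there are two concrete problems. The first is a swap of the two halves of Hochman's dichotomy: the inverse theorem produces subspaces $V_i$ on which typical components of $\mu$ are $(V_i,m,\epsilon)$-\emph{saturated} (nearly full conditional entropy in the $V_i$ direction) while typical components of $\nu$ are $(V_i,\epsilon)$-\emph{concentrated}. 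You say that components of $\mu$ are "concentrated on an affine line," which is the opposite condition; taken literally, your later disintegration argument would produce slices of dimension $0$, not $1$. The intended input to the Marstrand and dimension-conservation steps is line-\emph{saturation} of $\mu$, and the saturated/concentrated roles need to be reassigned correctly throughout.

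The second problem is a genuine gap. You write that if the $G_\Phi$-orbit closure of the saturation set $L$ is not all of $G(2,1)$ then "$L$ is contained in a finite $G_\Phi$-orbit and in particular contains at least one $D\Phi$-invariant line $V$." By the definition just before Proposition \ref{Theorem saturation for self similar measures}, a $D\Phi$-invariant line is a line \emph{fixed} by every orthogonal linear part of a map in $\Phi$, i.e.\ a $G_\Phi$-fixed point of $G(2,1)$; a finite $G_\Phi$-orbit need not contain one. For instance, if $G_\Phi$ is the cyclic group generated by the $90^\circ$ rotation there are no $G_\Phi$-fixed lines whatsoever, yet finite $G_\Phi$-invariant families of lines (such as $\{x\text{-axis},y\text{-axis}\}$) certainly exist. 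In that regime the hypothesis "$\mu$ does not admit a $1$-slicing" is vacuously true, so a correct argument must still derive a contradiction from saturation on a nontrivial finite orbit (for instance by showing that saturation along two distinct lines of such an orbit at a density-one set of scales forces $\dim\mu=2$). Your case analysis does not cover this, and the step as stated is false.
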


This Theorem follows from Corollary 2.15 in \cite{hochman2015self}.  It is used, for example, to prove part (1) of Theorem \ref{Theorem embeddings}: Denoting the self similar set $F$ and the self embedding $g$, we want to show that $A$, the linear part of $g$, is a similarity matrix. Recall that we are assuming that $\Phi$ has the SSC. By applying $g$ to cylinder sets of $F$ and then rescaling by an appropriate cylinder map, we obtain a sequence of  self-embeddings of $F$ of the form $\phi_I ^{-1} \circ g \circ \phi_J$ for some multi-indices $I,J$. If $A$ is not a similarity matrix, one uses the assumption that $G_\Phi$ is infinite (so it contains an irrational rotation), and the fact that $A(S^1)$ is an ellipse that is not a circle, to show that the set of accumulation points of this sequence  has Hausdorff dimension $\geq 1$. We then apply Theorem \ref{Theorem Inverse Theorem main application} to obtain a contradiction.

\textbf{Organization} In Section \ref{Preliminaries} we recall some basic relevant  definitions  and discuss the "restrict-map-rescale" heuristic that features prominently in this paper. Section \ref{Section proof of Theorem sauration} is devoted to the proof of Proposition \ref{Theorem saturation for self similar measures}. In Section \ref{Proof of self embeddings} we prove Theorem \ref{Theorem zero dimension of self embeddings}, and use it to prove  part (1) of Theorem \ref{Theorem embeddings}. In section \ref{Section proof of affine embeddings} we prove Theorem \ref{Theorem one into one}. Part (2) of Theorem \ref{Theorem embeddings} is proved in the subsequent section with some preliminaries regarding the weak separation condition. In section \ref{Section inverse Theorems for entropy} we discuss our versions of the inverse Theorem for entropy in two dimensions.

\textbf{Acknowledgements} This paper is part of the author's research towards a PhD dissertation, conducted at the Hebrew University of Jerusalem. I would like to sincerely thank
my adviser, Michael Hochman, for his continuous encouragement and support, and for many
useful comments and suggestions. I would also like to thank Ariel Rapaport and Shai Evra for many helpful discussions, and the anonymous referee for some useful suggestions (in particular, for pointing out the relation between the present work and \cite{bonk2013quasi}). 

\section{Preliminaries} \label{Preliminaries}
\subsection{Affine maps, similarities, and self similar sets} \label{Section self similar sets}
Let $SO(\mathbb{R}^2)$ denote the group of rotations in the plane, that is
\begin{equation*}
O \in SO(\mathbb{R}^2) \iff O\cdot O^T = \id \text{ and } \det(O)=1.
\end{equation*}
Then $SO(\mathbb{R}^2)$ is a subgroup of $O(\mathbb{R}^2)$, the orthogonal group of $\mathbb{R}^2$. The orthogonal group is in turn a subgroup of $GL(\mathbb{R}^2)$, the group of matrices with $\det \neq 0$.

Recall that we defined the sets of invertible affine maps $\mathbb{R}^i \rightarrow \mathbb{R}^i, i=1,2$ in \eqref{equation for G1}, \eqref{equation for G2}, and denoted them $G_1$ and $G_2$ respectively. Recall that $\phi \in G_2$ is called a similarity if $\phi(z) = \alpha\cdot O \cdot z +t, \alpha\neq 0, O\in O(\mathbb{R}^2), t\in \mathbb{R}^2$. We also recall that we may identify $G_i$  with an open subset of $\mathbb{R}^{i^2+i}$. 

Note that, with this identification, the transformation taking an affine map $\sigma\in G_2$ to its linear part is continuously differentiable (in fact, it is $C^\infty$).  Thus, if $\Sigma \subseteq G_2$ and $\Sigma'$ is the set of linear parts of the maps in $\Sigma$, one has $\dim_H \Sigma' \leq \dim_H \Sigma$. 

Also, when we work with $GL(\mathbb{R}^2)$ we shall usually use the metric induced by the operator norm (which is induced by $||\cdot ||_2$ on $\mathbb{R}^2$). This metric is equivalent to the metric induced on $GL(\mathbb{R}^2)$ via its identification with an open set in $\mathbb{R}^4$ with  the $||\cdot ||_2$ norm. Thus,  if a map defined on $GL(\mathbb{R}^2)$ is Lipschitz continuous in the operator norm then it is also Lipschitz continuous in the latter norm.

Recall that a set $F \subset \mathbb{R}^i$ is called self similar if there exists a generating IFS $\Phi$ for $F$ such that $\Phi$ consists only of similarities. Recall that we  denoted by $G_\Phi$ the group generated by the orthogonal parts of the maps in $\Phi$, so that $G_\Phi \leq O(\mathbb{R}^2)$. We remind the reader of our standing assumptions: unless stated otherwise, all self similar sets $F$ in this paper are assumed to be in $\mathbb{R}^2$. We shall always assume that $F$ does not lie on a $1$-dimensional affine line, and that $0<\dim F <2$. 

Note that $|G_\Phi|=\infty$ if and only if $G_\Phi$ contains an irrational rotation, that is, an element $O\in SO(\mathbb{R}^2)$ that has infinite order.  One also sees that for this to happen we must have either that some map in $\Phi$ has an irrational rotation as its orthogonal part, or there are two maps in $\Phi$ with linear parts being reflections such that their multiplication (or composition) yields an irrational rotation. Thus, either some map in $\Phi$ has an irrational rotation as its orthogonal part, or some map in the iterated IFS 
\begin{equation*}
\Phi^2 = \lbrace \phi_i \circ \phi_j : 1\leq i,j \leq l \rbrace
\end{equation*}
which is also a generating IFS for $F$,  has an irrational rotation as the orthogonal part of one of its maps.

For an IFS $\Phi = \lbrace \phi_i \rbrace_{i=1} ^l$ and its attractor $F$, a cylinder set is a set of the form $\phi_{i_1} \circ ... \circ \phi_{i_k} (F)$, where $\phi_i \in \Phi$ for all $i$ and $k\in \mathbb{N}$. Writing $I= (i_1,...,i_k)\in \lbrace 0,...,l \rbrace^k$, we use the notation $\phi_{i_1} \circ ... \circ \phi_{i_k} = \phi_I$. This composition of maps from the IFS shall be called a cylinder map. Thus, cylinder sets have the form $\phi_I (F), I\in \lbrace 0,...,l \rbrace^*$, and $|I|$ denotes the length of the word $I$.

\subsection{Hausdorff and entropy dimension of measures, and measure disintegration} \label{Section entropy}
Recall that $P(X)$ denotes the space of Borel probability measures supported on a Borel set $X\subseteq  \mathbb{R}^d$. We first recall the definition of entropy dimension of a measure.    Let 
\begin{equation} \label{Equastion dyadic}
D_n = \lbrace [\frac{k}{2^n}, \frac{k+1}{2^n}) \rbrace_{k\in \mathbb{Z}}
\end{equation}
denote the level $n$ dyadic partition of $\mathbb{R}$, and let
\begin{equation*}
D_n ^2 = \lbrace I_1 \times I_2 : I_i \in D_n \rbrace 
\end{equation*}
denote the level $n$ dyadic partition of $\mathbb{R}^2$. Let 
\begin{equation} \label{equation for entropy}
H(\theta, \mathcal{E}) =  -\sum_{E\in \mathcal{E}} \theta (E) \log \theta (E)
\end{equation}
denote the Shannon entropy of a probability measure $\theta \in P(X)$ with respect to a partition $\mathcal{E}$ of $X$.  Then the entropy dimension of $\theta \in P(\mathbb{R}^i),i=1,2,$ is defined as
\begin{equation*}
\dim_e \theta = \lim_{n\rightarrow \infty} \frac{1}{n} H(\theta, D_n ^i).
\end{equation*}
If the above limit does not exist, we define the upper entropy dimension $\overline{\dim}_e \theta$ by taking $\limsup$. Note  that if $\theta \in P(\mathbb{R}^i)$ is supported on a set $Y$ then
\begin{equation} \label{Equation dimension}
\overline{\dim}_B Y \geq \overline{\dim}_e \theta \geq \underline{\dim}_H \theta 
\end{equation}
where $\overline{\dim}_B Y$ is the upper box dimension of $Y$, and
\begin{equation*}
\underline{\dim}_H \theta  =  \inf \lbrace \dim A : A \text{ is Borel }, \theta (A)>0 \rbrace 
\end{equation*}

Note that if $F$ is the attractor of an IFS that satisfies the OSC then there exists an explicit self similar measure $\mu$ supported on $F$ such that  $\underline{\dim}_H \mu = \dim_H F$ (so it has maximal dimension). We shall call this measure the natural self similar measure on $F$. See \cite{falconer1986geometry},\cite{mattila1999geometry}, \cite{bishop2013fractal} for more details. 

Also, let $\mu \in P([0,1]^2)$ be a probability measure. Then $\mu$ admits a "Fubini" type decomposition
\begin{equation*}
\mu = \int_{x\in [0,1]} \delta_x \times \theta_x d P_1 \mu
\end{equation*}
where $\theta_x \in P([0,1])$ is defined almost surely with respect to $P_1 \mu$ (see e.g. Chapter 5 in \cite{einsiedler2011ergodic}). We shall denote the measures $\delta_x \times \theta_x$ by $\mu_{[x]}$, and refer to them as the disintegration, or conditional measures, of $\mu$ with respect to the projection $P_1$. Note that these measures are supported ($P_1 \mu$ almost surely) on vertical slices of the support of $\mu$. If we assume furthermore that $\mu$ is a self similar measure with respect to an IFS $\Phi$ with the SSC and $G_\Phi = \lbrace \id \rbrace$, then the conditional measures $\mu_{[x]}$ are almost surely exact dimensional, and we have the dimension conservation formula
\begin{equation*}
\dim \mu_{[x]} + \dim P_1 \mu = \dim \mu \quad \text{ for } P_1 \mu \text{ a.e. } x.
\end{equation*}
This was proved by Furstenberg in \cite{furstenberg2008ergodic}. In fact, this formula remains true for every projection (not only $P_1$). 

We end this subsection with by recalling the definition of our convolution measures. Let $v\in P(G_i)$ and $\mu \in P(\mathbb{R}^i)$ be compactly supported probability measures. Then $\nu.\mu \in P(\mathbb{R}^i)$ is defined as the push forward of $\nu \times \mu$ via the action map $(g,x) \mapsto g (x)$, from $G_i \times \mathbb{R}^i$ to $\mathbb{R}^i$. Note that this is a smooth map defined on an open subset of $\mathbb{R}^{i^2+i} \times \mathbb{R}^i$, so $\nu.\mu$ is a Borel probability measure on $\mathbb{R}^i$. 

\subsection{Graph directed self similar sets and projections of self similar sets} 
Let $G(\mathbf{V}, \mathbf{E})$ be a directed graph, where $\mathbf{V} = \lbrace 1, 2, . . . , q \rbrace$ is the set of vertices and $\mathbf{E}$ is the finite set of directed edges,  such that for each $i \in  \mathbf{V}$ there exists at least one $e \in  \mathbf{E}$ starting from i. Let $\mathbf{E}_{i,j}$ denote the set of edges from vertex $i$ to vertex $j$ and $\mathbf{E}^k _{i,j}$ denote the set of sequences of $k$ edges $(e_1, . . . , e_k)$ which form a directed path from vertex $i$ to vertex $j$. A graph directed iterated function system (GD-IFS) in $\mathbb{R}^d$ is a finite collection of maps $\lbrace g_e : e \in \mathbf{E} \rbrace$ from $\mathbb{R}^d$ to $\mathbb{R}^d$ such that every $g_e$ is a contracting similarity. The attractor of the GD-IFS is the unique q-tuple of nonempty compact sets $(K_1, . . . , K_q)$ such that
\begin{equation*}
K_i = \bigcup_{j=1} ^q \bigcup_{e \in \mathbf{E}_{i,j}} g_e (K_j).
\end{equation*}
The attractor of a GD-IFS is called a graph directed attractor, or graph directed set.

The directed graph $G (\mathbf{V}, \mathbf{E})$ is called strongly connected if for every pair of vertices $i$ and $j$ there exists a directed path from $i$ to $j$ and a directed path from $j$ to $i$. We say that the GD-IFS $\lbrace g_e : e\in \mathbf{E} \rbrace$ is strongly connected if $G (\mathbf{V}, \mathbf{E})$ is strongly connected. Thus, the attractor of a strongly connected GD-IFS satisfies $\dim_H K_i = \dim_H K_j, 1\leq i,j\leq q$. By the implicit methods\footnote{Note that Corollary 3.5 in \cite{falconer1997techniques} is stated for GD-IFS's with some separation conditions. However, for the proof of the equality $\dim_H K_i = \dim_B K_i$ and for the proof that the Hausdorff measure in the dimension is finite (which rely on Theorem 3.2 in \cite{falconer1997techniques}), no separation is required for the GD-IFS.} of Falconer (Corollary 3.5 in \cite{falconer1997techniques}), for each graph directed set such that $G(\mathbf{V},\mathbf{E})$  is strongly connected, the Hausdorff measure of $K =\bigcup_{i=1} ^q K_i$ in its dimension is  finite, and also  $\dim_H K = \dim_B K = \dim_B K_i = \dim_H K_i$ for every $i$.

In our treatment of Theorem \ref{Theorem embeddings}, we shall require the following results of {\'A}bel Farkas, regarding linear images of self similar sets in the plane, and dimension approximation of GD-IFS's. We group both statements in the following Theorem. We remark that we do not state the most general form of this Theorem, only what we require.

\begin{theorem}  \cite{Farkas2014projections}, \cite{farkas2015dimension} \label{Theorem farkas}
Let $F\subseteq \mathbb{R}^2$ be a self similar set generated by  an IFS $\Phi$ such that $|G_\Phi| < \infty$. Let  $L : \mathbb{R}^2 \rightarrow \mathbb{R}$ be a linear map. Let $G_\Phi = \lbrace O_1,...,O_l,O_{l+1},..,O_q \rbrace$ where $\lbrace O_1,..,O_l \rbrace$ is the set of linear parts of the maps in $\Phi$. Then the following two statements are valid:
\begin{enumerate}
\item   There exists a strongly connected GD-IFS  with attractor $(L \circ O_1(F), . . . , L \circ O_q(F))$.

\item For every $\epsilon>0$ there is some $K \subseteq L \circ O_1 (F)$ such that $ \dim L \circ O_1 (F) - \dim K < \epsilon$, and $K$ is the attractor of a self similar IFS with the SSC.
\end{enumerate}
In particular, part (1) implies that
\begin{equation*}
L(F) = \bigcup_{i=1} ^l L(\alpha_i O_i (F) +t_i), \text{ where } \Phi = \lbrace z\mapsto \alpha_i O_i (z) +t_i \rbrace_{i=1} ^l
\end{equation*}
has equal Hausdorff and Box dimension, and that $L(F)$ has finite Hausdorff measure in its dimension.
\end{theorem}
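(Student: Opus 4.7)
The plan is to obtain Part (1) by a direct unpacking of the self-similar relation for $F$ combined with the group structure of $G_\Phi$, and then to derive Part (2) as a dimension-approximation consequence of Part (1). Writing $\Phi = \{z\mapsto \alpha_i O_i z + t_i\}_{i=1}^l$, the self-similar identity $F = \bigcup_{i=1}^l (\alpha_i O_i F + t_i)$ combined with linearity of $L\circ O_j$ gives
$$L\circ O_j(F) \;=\; \bigcup_{i=1}^l \bigl(\alpha_i\, L(O_j O_i\, F) + L(O_j t_i)\bigr).$$
Since $G_\Phi$ is a group, for each pair $(j,i)$ there is a unique index $\sigma(j,i)\in\{1,\ldots,q\}$ with $O_{\sigma(j,i)} = O_j O_i$. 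This exhibits the tuple $(L\circ O_1(F),\ldots,L\circ O_q(F))$ as the unique attractor of the GD-IFS whose vertex set is $\{1,\ldots,q\}$, with a directed edge from $j$ to $\sigma(j,i)$ for each $i\in\{1,\ldots,l\}$, carrying the similarity $z\mapsto \alpha_i z + L(O_j t_i)$.

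To verify strong connectedness, I would use that $\{O_1,\ldots,O_l\}$ generates $G_\Phi$ as a group by definition, hence (since $G_\Phi$ is finite) also as a semigroup: each inverse is a sufficiently high positive power of its element. Thus for any $j,k\in\{1,\ldots,q\}$ the element $O_k O_j^{-1}$ admits a positive-length word $O_{i_m}\cdots O_{i_1}$ in the generators, and the corresponding path in the GD-graph goes from vertex $j$ to vertex $k$. The "in particular" statement — equality of Hausdorff and box dimensions across the pieces and finiteness of the Hausdorff measure in the dimension — is then immediate from Falconer's implicit methods cited just before the theorem statement.

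For Part (2), I would iterate the GD relation: every length-$n$ path $\gamma$ from vertex $1$ back to vertex $1$ produces a homothety $g_\gamma$ with scaling $\alpha_\gamma = \prod_s \alpha_{i_s}$ mapping $L\circ O_1(F)$ into itself, so the collection of all such return-cycles of length $n$ forms a self-homothetic sub-IFS $\Psi_n$ whose attractor sits inside $L\circ O_1(F)$. Its similarity dimension approaches $\dim L\circ O_1(F)$ as $n\to\infty$ by a Perron--Frobenius calculation on the subshift of return words. The remaining task is to thin $\Psi_n$ to satisfy the SSC while losing only $\epsilon$ in dimension; here I would exploit the finiteness of the Hausdorff measure of $L\circ O_1(F)$ in its dimension from Part (1), together with a Vitali-type covering argument, to discard those cylinders whose images overlap. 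The main obstacle lies in this last step: because $L$ typically makes the GD-IFS in Part (1) genuinely overlapping, one cannot invoke equality of similarity and Hausdorff dimensions, and the separation of cylinders must be carried out geometrically using the finiteness of the Hausdorff measure in the dimension rather than any a priori separation condition.
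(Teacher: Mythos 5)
Part (1) of your argument is correct. Unpacking the self-similar identity via the group structure of $G_\Phi$ gives exactly the graph-directed relation you write, and your observation that a finite group generated by a set $S$ as a group is also generated by $S$ as a semigroup (inverses are positive powers) is what strong connectedness requires. There is a harmless order slip: since an edge from vertex $j$ indexed by $i$ lands at the vertex corresponding to $O_jO_i$, a path of length $m$ from $j$ lands at $O_jO_{i_1}\cdots O_{i_m}$, so the element that must be expressed as a positive word is $O_j^{-1}O_k$ rather than $O_kO_j^{-1}$; the conclusion is unaffected because every element of the finite group has a positive word. The ``in particular'' clause then follows immediately from Falconer's implicit methods.

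Part (2) contains a genuine gap. The claim that the similarity dimension of $\Psi_n$ approaches $\dim L\circ O_1(F)$ as $n\to\infty$ is false: Perron--Frobenius shows the similarity dimension of the length-$n$ return-cycle sub-IFS converges to the similarity dimension of the graph-directed system, which in general strictly exceeds the Hausdorff dimension of $L\circ O_1(F)$ precisely because the image cylinders under $L$ overlap --- the very fact you concede in your closing sentence. With that claim removed, the entire burden falls on the thinning step, and there your sketch misses the operative tool: finiteness of $\mathcal{H}^s(L\circ O_1(F))$ by itself gives no lower bound on how many cylinders survive, since this measure may well be zero (Falconer's implicit theorems yield finiteness but not positivity absent a separation condition). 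What does work is the other consequence of Part (1), namely $\dim_B L\circ O_1(F) = \dim_H L\circ O_1(F) = s$: cover $L\circ O_1(F)$ by generation-$n$ cylinders of diameter $\approx r$, extract from the $\gtrsim r^{-(s-\delta)}$ $r$-cubes meeting $L\circ O_1(F)$ a maximal $O(r)$-separated subfamily, assign to each such cube a cylinder meeting it (these cylinders are then pairwise disjoint), and append a bounded-length tail using strong connectedness to turn each path into a return cycle; the resulting IFS has the SSC and similarity dimension $\geq s-\delta-o_n(1)$. Relatedly, restricting from the outset to return cycles of a single length $n$, as your $\Psi_n$ does, is already problematic at the counting stage, because those cylinders need not cover $L\circ O_1(F)$; one should count over all length-$n$ paths from vertex $1$ and append the short return tail only after the disjoint subfamily has been selected.
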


\subsection{The restrict - map - rescale heuristic} \label{Section restrict map resacles}
Suppose we have an affine embedding $g$ of a self similar set $F$ into a self similar set $E$, where the generating IFS of $E$ is assumed to have the SSC. As we mentioned in the introduction, there is a way to produce more embeddings of $F$ into $E$, by making use of the self similar structures of both $F$ and $E$. Namely, we can do this  by applying $g$  to cylinder sets of $F$ of large generation, and then "rescaling" by pre-composing with the inverse of a cylinder map of $E$, that corresponds to a cylinder set that contains this image set. We thus obtain a new embedding of $F$ into $E$ of the form $\psi_J ^{-1} \circ g \circ \phi_I$ where $\phi_I$ is a cylinder map of $\Phi$ and $\Psi_J$ is a cylinder map of $\Psi$.

The next two Lemmas are key in order to formally execute the above heuristic. First, Let us recall the following Lemma from \cite{feng2014affine}. Let $\Psi = \lbrace \psi_i \rbrace_{i=1} ^m$ be an IFS generating the set $E$, and denote its contraction ratios by $\beta_1,...,\beta_m$. Assume without the loss of generality that $\beta_1 \leq \beta_i$ for all $i$. For any $0<r<\beta_1$ define the family of $r$-cylinders by
\begin{equation} \label{equation for Ar}
\Psi_r = \lbrace \psi_I, I = (i_1,...,i_n) \in \lbrace 1,...,m\rbrace^*: \quad \beta_{i_1}\cdot \cdot \cdot \beta_{i_n} \leq r < \beta_{i_1} \cdot \cdot \cdot \beta_{i_{n-1}} \rbrace
\end{equation}
Then 
\begin{equation}
\bigcup_{\psi_I \in \Psi_r } \psi_I (E) = E. \label{Equation stopping time cover}
\end{equation}
For two non-empty sets $A,B \subseteq \mathbb{R}^2$ we define
\begin{equation*}
\text{dist} (A,B) = \inf \lbrace || a-b||: a\in A, b\in B \rbrace
\end{equation*}

\begin{Lemma} \cite{feng2014affine} \label{Lemma N0}
Suppose $\Psi$ satisfies the OSC. Then there exists $N_0 \in \mathbb{N}$ such that for any $0<r<\beta_1$ and $\psi_I \in \Psi_r$,
\begin{equation*}
| \lbrace \psi_J \in \Psi_r : \text{ dist } (\psi_I (E),\psi_J (E)) \leq r \rbrace| \leq N_0
\end{equation*}
\end{Lemma}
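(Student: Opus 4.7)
The plan is to use the open set condition to convert the statement into a volume packing estimate. Fix an open set $U \subseteq \mathbb{R}^2$ witnessing the OSC for $\Psi$, so that the sets $\psi_i(U)$ are pairwise disjoint, each $\psi_i(U) \subseteq U$, and one may assume $E \subseteq \overline{U}$. A standard induction on word length shows that $\{\psi_J(U): \psi_J \in \Psi_r\}$ is a pairwise disjoint family of open sets, since the definition of $\Psi_r$ in equation \eqref{equation for Ar} ensures that this family corresponds to an antichain in the tree of words. Each $\psi_J(U)$ has two-dimensional Lebesgue measure equal to $c_J^{2}\,\mathrm{Leb}(U)$, where $c_J$ is the contraction ratio of $\psi_J$, and the defining property of $\Psi_r$ gives $\beta_{\min} r < c_J \leq r$, where $\beta_{\min} = \min_i \beta_i$. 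Hence every $\psi_J(U)$ has Lebesgue measure at least $\beta_{\min}^{\,2}\, r^{2}\, \mathrm{Leb}(U)$, and diameter at most $r\cdot \mathrm{diam}(U)$.

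Now fix $\psi_I \in \Psi_r$ and consider any $\psi_J \in \Psi_r$ with $\mathrm{dist}(\psi_I(E),\psi_J(E)) \leq r$. Pick once and for all a reference point $p_I \in \psi_I(E)$. Since $\mathrm{diam}(\psi_I(E))\leq r\cdot \mathrm{diam}(E)$ and similarly for $\psi_J$, and since $E \subseteq \overline{U}$ implies $\psi_J(U) \subseteq B(p_J, r\cdot \mathrm{diam}(U))$ for any $p_J \in \psi_J(E)$, an easy triangle inequality shows
\[
\psi_J(U) \subseteq B\bigl(p_I,\; C r\bigr),
\]
for a constant $C = C(\mathrm{diam}(E),\mathrm{diam}(U))$ that depends only on $\Psi$ (and not on $r$, $I$, or $J$). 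So all the sets $\psi_J(U)$ in question lie in a common ball of Lebesgue measure $\pi C^{2} r^{2}$.

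Combining the disjointness of the family with the individual lower bound on measure, the number of admissible $\psi_J$ is bounded by
\[
\frac{\pi C^{2}\, r^{2}}{\beta_{\min}^{\,2}\, r^{2}\, \mathrm{Leb}(U)} \; = \; \frac{\pi C^{2}}{\beta_{\min}^{\,2}\, \mathrm{Leb}(U)},
\]
which is a finite constant $N_0$ depending only on $\Psi$, as required. The main subtlety, and the only point requiring care, is verifying that one may genuinely choose $U$ bounded with $E \subseteq \overline{U}$ and $\mathrm{Leb}(U)>0$ simultaneously; the first two follow from replacing $U$ by a bounded component meeting $E$, while $\mathrm{Leb}(U)>0$ is automatic for a nonempty open set in $\mathbb{R}^2$. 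All other ingredients are formal.
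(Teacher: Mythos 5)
The paper does not prove this lemma; it is cited from Feng, Huang and Rao \cite{feng2014affine}, so there is no in-paper proof to compare against. Your approach (pairwise disjointness of $\{\psi_J(U):\psi_J\in\Psi_r\}$ plus a volume-packing count inside a ball of radius $Cr$) is the standard route, and is essentially the one in \cite{feng2014affine}.

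There is, however, one step you should not wave away. You assert that ``one may assume $E\subseteq\overline{U}$'' and justify it by ``replacing $U$ by a bounded component meeting $E$.'' As stated, the OSC only guarantees that \emph{some} nonempty open $U$ exists with $\psi_i(U)\subseteq U$ and pairwise disjoint images; it does not guarantee $U\cap E\neq\emptyset$. The existence of an OSC set meeting $E$ (the strong OSC) is a genuine theorem in the self-similar setting, due to Schief, and is not a formality. Moreover, even granting $U\cap E\neq\emptyset$, passing to a single connected component of $U$ need not preserve the invariance $\psi_i(U_0)\subseteq U_0$, so the substitution you propose does not obviously yield another OSC witness with $E\subseteq\overline{U_0}$. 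As written, this step is a gap.

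Fortunately your proof does not actually need $E\subseteq\overline{U}$, and it is cleaner to avoid it. Fix once and for all a point $u_0\in U$ and a radius $\rho>0$ with $B(u_0,\rho)\subseteq U$. Then $\psi_J(B(u_0,\rho))=B(\psi_J(u_0),c_J\rho)$ is an open ball of radius $>\beta_{\min}\rho r$ contained in $\psi_J(U)$, so the balls over distinct $\psi_J\in\Psi_r$ are still pairwise disjoint. Moreover for any $e\in\psi_J(E)$ one has $|\psi_J(u_0)-e|\leq c_J\bigl(d(u_0,E)+\operatorname{diam}(E)\bigr)\leq r\,C_0$ with $C_0$ depending only on $u_0$ and $E$; combining this with $\operatorname{dist}(\psi_I(E),\psi_J(E))\leq r$ and $\operatorname{diam}(\psi_I(E))\leq r\operatorname{diam}(E)$ shows all the balls $B(\psi_J(u_0),c_J\rho)$ lie in $B(p_I,Cr)$ for a constant $C$ depending only on $\Psi$ and $u_0$. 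The volume count then gives $N_0\leq C^2/(\beta_{\min}\rho)^2$. This removes both the reliance on Schief's theorem and the boundedness concern about $U$, and makes the rest of your argument go through unchanged.
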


For an affine map $g: \mathbb{R}^{d_1} \rightarrow \mathbb{R}^{d_2}$ let $||g||$ denote the operator norm  of the linear part of $g$. 
\begin{Lemma} \label{Lemma Restrict map rescale}
Let  $F$ and $E$ be two self similar sets, generated by IFS's $\Phi = \lbrace \phi_i \rbrace_{i=1} ^l$ and $\Psi = \lbrace \psi_j  \rbrace_{j=1} ^m$, respectively. Denote, for $1\leq i \leq l$ and $1 \leq j \leq m$, the contraction ratio of $\phi_i$ by $\alpha_i \in (0,1)$ and of $\psi_j$ by $\beta_j \in (0,1)$. Suppose that $g$ is an affine embedding of $F$ into $E$. Then the following two statement are valid:

\begin{enumerate}
\item Suppose $\Psi$ has the OSC. Let $I \in \lbrace 1,..., l \rbrace^*$ . Define 
\begin{equation*}
r = ||g|| \cdot \alpha_I \cdot \text{diam} (F),
\end{equation*}
then
\begin{equation*}
| \lbrace \psi_J \in \Psi_{r} : g( \phi_I (F) ) \cap \psi_J (E) \neq \emptyset \rbrace | \leq N_0
\end{equation*}
where $\Psi_r$ was defined in \eqref{equation for Ar} and $N_0$ is the number from Lemma \ref{Lemma N0}.

\item Suppose  $\Psi$ has the SSC, and that it has a uniform contraction ratio $\lambda = \beta_1 =....=\beta_m$. Let 
\begin{equation*}
\rho = \min_{i\neq j} \text{dist} (\psi_i (E), \psi_j (E) ) >0.
\end{equation*} 
Let $n\in \mathbb{N}$. Let $I \in \lbrace 1,...,l \rbrace^*$ be such that $||g|| \cdot \alpha_I \cdot \text{diam} (F)< \rho \lambda^{n-1}$. Then $g(\phi_I (F))$ is contained within a unique $n$-generational cylinder $\psi_J (E), |J|=n$.
\end{enumerate}
\end{Lemma}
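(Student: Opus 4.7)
The plan for both parts rests on a single elementary observation: since $g$ has Lipschitz constant $\|g\|$ and $\phi_I$ has Lipschitz constant $\alpha_I$, the set $g(\phi_I(F))$ is contained in $E$ (via $g(F) \subseteq E$) and has diameter at most $\|g\| \cdot \alpha_I \cdot \mathrm{diam}(F)$. In part (1) this is exactly $r$, while in part (2) it is strictly less than $\rho\lambda^{n-1}$. The two statements then follow by matching this diameter against the appropriate cylinder-scale decomposition of $E$.

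For part (1), I would use \eqref{Equation stopping time cover} to write $E = \bigcup_{\psi_J \in \Psi_r} \psi_J(E)$, so any $\psi_J$ counted in the statement lies in $\Psi_r$ and meets the diameter-$r$ set $g(\phi_I(F))$. If this collection is empty we are done; otherwise fix one such $\psi_{J_0}$ in it. For any other $\psi_J \in \Psi_r$ meeting $g(\phi_I(F))$, both cylinders $\psi_{J_0}(E)$ and $\psi_J(E)$ touch a common set of diameter at most $r$, hence $\mathrm{dist}(\psi_{J_0}(E),\psi_J(E)) \leq r$. Lemma \ref{Lemma N0}, applied to $\psi_{J_0}$, then caps the number of admissible $\psi_J$ by $N_0$.

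For part (2) the key auxiliary step is the following separation estimate for $n$-cylinders under the SSC: if $J_1 \neq J_2$ are multi-indices of length $n$, let $K$ denote their longest common prefix and write $J_1 = K \cdot i \cdot J_1'$, $J_2 = K \cdot j \cdot J_2'$ with $i \neq j$; then $|K| \leq n-1$, and since $\psi_K$ is a similarity of ratio $\lambda^{|K|}$,
\begin{equation*}
\mathrm{dist}(\psi_{J_1}(E),\psi_{J_2}(E)) \;\geq\; \lambda^{|K|} \cdot \mathrm{dist}(\psi_i(E),\psi_j(E)) \;\geq\; \lambda^{|K|}\rho \;\geq\; \lambda^{n-1}\rho,
\end{equation*}
where the last inequality uses $\lambda < 1$ and $|K| \leq n-1$. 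Since $g(\phi_I(F)) \subseteq E = \bigcup_{|J|=n} \psi_J(E)$ has diameter strictly below this separation threshold, it cannot intersect two distinct $n$-cylinders, and therefore lies inside a unique one.

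I do not anticipate any serious obstacle; the only point requiring care is the separation estimate in part (2), where the longest-common-prefix bookkeeping must be handled correctly in order to extract the sharp exponent $n-1$ and to use $\lambda < 1$ to pass from the worst case $|K|=n-1$ to the stated bound. Everything else is a matter of matching scales and directly invoking Lemma \ref{Lemma N0}.
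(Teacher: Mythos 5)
Your proposal is correct and follows essentially the same route as the paper: part (1) uses the $\Psi_r$-cover together with Lemma \ref{Lemma N0} by comparing the diameter of $g(\phi_I(F))$ with $r$, and part (2) compares the same diameter bound with the minimum separation $\rho\lambda^{n-1}$ between distinct $n$-cylinders. The only cosmetic difference is that you derive the separation estimate explicitly via the longest-common-prefix argument, whereas the paper simply asserts it from self-similarity and the uniform contraction ratio; both are sound and give the same bound.
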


\begin{proof}
We have $g( \phi_I (F)) \subseteq g(F) \subseteq E$, and by equation \eqref{Equation stopping time cover}, there is some $\psi_J \in \Psi_{r}$ such that $\psi_J (F) \cap g( \phi_I (F)) \neq  \emptyset$. Therefore, 
\begin{equation} \label{Equation left hand side 1}
\lbrace \psi_U \in \Psi_{r} : \quad g( \phi_I (F)) \cap \psi_U (E) \neq \emptyset \rbrace \subseteq \lbrace \psi_U \in \Psi_{r}: \quad d(\psi_J (E),\psi_U (E))\leq r \rbrace
\end{equation}
This follows since if $\psi_U$ belongs to the set on the left hand side of equation \eqref{Equation left hand side 1} then by the definition of $r$
\begin{equation*}
d(\psi_U (E), \psi_J (E)) \leq \text{diam } g(\phi_I (F)) \leq ||g|| \cdot \alpha_I \cdot \text{diam} (F) = r.
\end{equation*}
Therefore, by Lemma \ref{Lemma N0} the set on the left hand side of equation \eqref{Equation left hand side 1} contains at most $N_0$ maps. This proves the first part of the Lemma. 

The second part follows by noting that by self similarity, the definition of $\rho$, and the fact that $\Psi$ has a uniform contraction ratio,
\begin{equation} \label{Equation contradiction}
\min_{U\neq J \in \lbrace 1,...,m\rbrace^n} \text{dist} (\psi_U (E), \psi_J (E) ) = \rho \cdot \lambda^{n-1}.
\end{equation}
Since $g( \phi_I (F)) \subseteq E$, it follows that $g( \phi_I (F) ) \cap \psi_J (E) \neq \emptyset$ for some $J\in \lbrace 1,..,m\rbrace^n$. If $g( \phi_I (F) ) \cap \psi_U (E) \neq \emptyset$ for some other $U \neq J\in \lbrace 1,..,m\rbrace^n$, then
\begin{equation*}
d(\psi_U (E), \psi_J (E)) \leq \text{diam } g(\phi_I (F)) \leq ||g|| \cdot \alpha_I \cdot \text{diam} (F) < \rho \lambda^{n-1} 
\end{equation*}
Contradicting equation \eqref{Equation contradiction}. It follows that $\psi_J (E)$ is the unique $n$-generational cylinder that intersects $g( \phi_I (F))$, and since $g( \phi_I (F)) \subseteq E$, we must have $g( \phi_I (F)) \subseteq \psi_J (E)$. 
\end{proof}

\subsection{The Hausdorff metric}
We recall the definition of the Hausdorff metric on compact subset of 
\begin{equation*}
Q=[-1,1]^2 .
\end{equation*}
Let $A,B \subseteq Q$ be non-empty and compact. For every $\epsilon >0$ define, using the Euclidean norm $|| \cdot ||_2$,
\begin{equation*}
A_\epsilon = \lbrace x \in Q : \exists a\in A, || x-a|| < \epsilon \rbrace.
\end{equation*}
The Hausdorff metric is then defined as the distance function
\begin{equation*}
d_H (A,B) = \inf \lbrace \epsilon >0 : A \subseteq B_\epsilon, B \subseteq A_\epsilon \rbrace. 
\end{equation*}
It is well known that the space  $\cmpct(Q)$ of  non-empty compact subsets of $Q$ is compact with this metric (see e.g. the appendix in \cite{bishop2013fractal}).

We proceed to collect a number of facts about the Hausdorff metric on compact subsets of $Q$. The following Proposition is standard, so we omit its proof. Recall that $P_2 (x,y) = y$.

\begin{Proposition} \label{Proposition - Hausdorff metric0}
In the following claims, convergence of sets is always in the Hausdorff metric on compact subsets of $Q$, and converges of points of $Q$ is in the standard Euclidean norm. 
\begin{enumerate}
\item Suppose $X_n \subseteq Q$ is a sequence of compact sets that converges  to $X \subseteq Q$. Then
\begin{equation*}
X = \lbrace x\in Q : \exists x_{n_k} \in X_{n_k} , \lim_{k} x_{n_k} = x\rbrace.
\end{equation*}
In particular, if a set $B \subseteq Q$ satisfies that for every $b \in B$ there is a sequence $x_{n_k} \in X_{n_k}$ such that $\lim x_{n_k} = b$ then $B \subseteq X$. On the other hand, if $C \subseteq Q$ is a compact set such that every converging subsequence $x_{n_k} \in X_{n_k}$ holds $\lim_{k} x_{n_k} \in C$, then $X \subseteq C$.

\item Suppose $X_n = \bigcup_{k=1} ^l X_n ^k$ where $l \in \mathbb{N}$ is fixed, $n\rightarrow \infty$, and  $X_n ^k \subseteq Q$ are compact sets for all $n,k$.  Suppose that $X_n ^k \rightarrow X^k$ as $n\rightarrow \infty$ for all $1 \leq k \leq l$. Then $X_n \rightarrow \bigcup_{k=1} ^l X^k$. 

\item Suppose the sequence $X_n \subseteq Q$ converges to $X\subseteq Q$. Then $P_2 (X_n)$ converges to $P_2 (X)$.
\end{enumerate}
\end{Proposition}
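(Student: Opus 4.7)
The plan is to treat each of the three parts using standard properties of the Hausdorff metric; no step is deep, but each requires careful bookkeeping with $\epsilon$-neighborhoods.

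For part (1), I would first verify the equality $X = \{x \in Q : \exists\, x_{n_k} \in X_{n_k},\ x_{n_k} \to x\}$. The forward direction uses that $X \subseteq (X_n)_{\epsilon_n}$ for some $\epsilon_n \to 0$: picking a witness in each $X_n$ within distance $\epsilon_n$ of a given $x \in X$ supplies an approximating sequence (no subsequence even needed). The reverse direction uses the containment $X_n \subseteq X_{\epsilon_n}$, so any accumulation point of points chosen from the $X_n$ is at zero distance from the compact set $X$, hence lies in $X$. Once this equality is in hand, the two auxiliary statements are immediate: a set $B$ every element of which is a subsequential limit is contained in the right-hand side and hence in $X$; and a compact set $C$ containing all subsequential limits from the $X_{n_k}$ contains every element of $X$.

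For part (2), I plan to invoke the elementary inequality $d_H(A \cup B, C \cup D) \leq \max(d_H(A, C), d_H(B, D))$ for nonempty compact sets, which is verified directly by checking the two $\epsilon$-containments separately. Iterating over the fixed index $l$ gives $d_H\bigl(\bigcup_{k=1}^l X_n^k, \bigcup_{k=1}^l X^k\bigr) \leq \max_{1 \leq k \leq l} d_H(X_n^k, X^k) \to 0$. The hypothesis that $l$ is independent of $n$ is the only place where finiteness enters, and is essential.

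For part (3), the key observation is that $P_2$ is $1$-Lipschitz on $Q$, so for any compact $A, B \subseteq Q$ we have $d_H(P_2(A), P_2(B)) \leq d_H(A, B)$: if $A \subseteq B_\epsilon$, then every $P_2(a) \in P_2(A)$ has a witness $b \in B$ within $\epsilon$ of $a$, and $P_2(b)$ lies within $\epsilon$ of $P_2(a)$. The conclusion $P_2(X_n) \to P_2(X)$ follows, using continuity of $P_2$ and compactness of $X_n, X$ to guarantee that the image sets are compact and the Hausdorff metric is defined on them. The main, and very mild, point to keep straight throughout is that one-sided $\epsilon$-containments only yield $d_H \leq \epsilon$ after both directions are verified; I expect no other obstacle.
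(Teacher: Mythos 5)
Your proof is correct, and it takes the expected route: the paper itself declares Proposition \ref{Proposition - Hausdorff metric0} ``standard, so we omit its proof,'' and the three-part argument you give (characterizing the Hausdorff limit as the set of subsequential limits via the two one-sided $\epsilon$-containments, the inequality $d_H(A\cup B, C\cup D)\leq \max(d_H(A,C),d_H(B,D))$ iterated over the fixed index $l$, and the $1$-Lipschitz property of $P_2$) is precisely the standard argument the author is deferring to. One small bookkeeping point worth keeping in mind for part (1): the reverse inclusion relies on $X$ being closed, which you implicitly use when concluding $d(x,X)=0\Rightarrow x\in X$; this is guaranteed since $X$ is by hypothesis a compact set, but it is the one spot where the compactness of the limit (and not just of the $X_n$) is actually invoked.
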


\section{Proof of Proposition \ref{Theorem saturation for self similar measures}}  \label{Section proof of Theorem sauration}
We now prove Proposition \ref{Theorem saturation for self similar measures}. Part (1) follows easily by observing that the only subgroups of $O(\mathbb{R}^2)$ that admit an invariant  line $V\in G(2,1)$ are $\lbrace \pm \id \rbrace$, $\lbrace  \id, R_1\rbrace$, and $\lbrace \pm \id, R_1, R_2 \rbrace$ where $R_1$ and $R_2$ are reflections about orthogonal lines.

For the proof of Part (2), it will be convenient to introduce the notation
\begin{equation*}
AG(2,1) = \lbrace L: L = V+t , \quad  V\in G(2,1), \quad  t\in \mathbb{R}^2 \rbrace
\end{equation*}
to denote the space of all affine lines in $\mathbb{R}^2$. We shall generally use the letter $L$ to denote elements in $AG(2,1)$, as opposed to using the letter $V$ to denote elements of $G(2,1)$.

Now, part (2) follows by proving that if $F$ is a self similar set with a generating IFS having the OSC, and $\dim F\cap L=1$ for some affine line $L\in AG(2,1)$, then $F\cap L'$ contains a non-degenerate interval for some affine line $L'\in AG(2,1)$. The proof of this observation  is  based on the approximation argument proving Proposition 2.2 in \cite{feng2014affine}. In fact, we closely follow the proof of  Theorem 1.1 from \cite{feng2014affine}, but we can't use it as it is stated there since the self similar set we embed sits on a line in $\mathbb{R}^2$ (so it is really one dimensional). In this section we always assume, without the loss of generality, that $F\subseteq [0,1]^2$.

\begin{Proposition} \label{Proposition full dimension of fiber implies connected}
Let $F \subseteq [0,1]^2$ be a self similar set generating by an IFS $\Phi$ with the OSC. Suppose $\dim F\cap L =1$ for some $L \in AG(2,1)$. Then there exists some line $L' \in AG(2,1)$ such that $F\cap L'$ contains a non-degenerate interval. 

Moreover, writing $L=V+t$ and $L' = V' +t'$ for $V,V' \in G(2,1), t,t' \in \mathbb{R}^2$ we have
\begin{equation*}
V' \in \lbrace W \in G(2,1): W = \lim_{n} O_{i_1} ^{-1} \cdot \cdot \cdot O_{i_n} ^{-1} (V), \quad O_{i_k} \text{ are  linear part of  maps in } \Phi \rbrace 
\end{equation*}
\end{Proposition}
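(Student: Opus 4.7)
The plan is to adapt the approximation argument used by Feng, Huang and Rao to prove Theorem 1.1 and Proposition 2.2 of \cite{feng2014affine}. The broad strategy has four stages: (i) use the self-similar decomposition of $F$ to select, at every scale, a nested cylinder whose intersection with $L$ already has dimension one; (ii) rescale by the inverse cylinder map to obtain a sequence of slices $F\cap L_n$ with $L_n$ in direction $V_n=O_{I_n}^{-1}(V)$; (iii) extract a Hausdorff-metric limit to produce a limit line $L'=V'+t'$ and a compact set $K\subseteq F\cap L'$; (iv) use the OSC to upgrade the dimension-one property surviving in the limit to a genuine interval inside $K$.

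Step 1 (nested cylinder selection). Build an infinite sequence $(i_1,i_2,\ldots)$ inductively so that, setting $I_n=(i_1,\ldots,i_n)$, one has $\dim(\phi_{I_n}(F)\cap L)=1$ for every $n$. The base case comes from writing $F\cap L=\bigcup_{j=1}^l \phi_j(F)\cap L$: a finite union of dimension $1$, hence at least one summand has dimension $1$. The inductive step applies the same reasoning inside $\phi_{I_n}(F)=\bigcup_j\phi_{I_n}\phi_j(F)$. Writing $\phi_{I_n}(z)=\alpha_{I_n}O_{I_n}z+t_{I_n}$ and $L=V+t$, a direct computation gives $L_n:=\phi_{I_n}^{-1}(L)=V_n+s_n$ with $V_n=O_{I_n}^{-1}(V)$, and $\dim(F\cap L_n)=1$ since $\phi_{I_n}$ is a similarity.

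Step 2 (compactness and limit line). The space $G(2,1)$ is compact, so along a subsequence $V_n\to V'$. Picking a base point $x_n\in F\cap L_n\subseteq[0,1]^2$ and extracting a further subsequence with $x_n\to x_\infty$, the affine lines $L_n$ converge to $L':=V'+t'$ with $t'$ determined by $x_\infty$. By compactness of the Hausdorff metric on compact subsets of $[0,1]^2$, extract once more so that $F\cap L_n\to K$ in Hausdorff metric for a compact $K\subseteq F$. Standard properties of Hausdorff convergence (cf.\ Proposition~\ref{Proposition - Hausdorff metric0}) give $K\subseteq L'$. The ``moreover'' statement then follows immediately from $V_n=O_{I_n}^{-1}(V)$, since the nested construction writes $V_n$ as a composition of inverse orthogonal parts of maps of $\Phi$ applied to $V$.

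Step 3 (interval in the limit). This is the crux. Because $\phi_{I_n}(F)\cap L$ has dimension $1$ inside a segment of length $O(\alpha_{I_n})$, after rescaling by $\phi_{I_n}^{-1}$ the slice $F\cap L_n$ has dimension $1$ inside a segment of bounded length. The OSC guarantees that Hausdorff and upper box dimensions agree on $F$ together with uniformly bounded cylinder overlaps; these translate into a uniform lower bound of $r^{-1+o(1)}$ on the covering number of $F\cap L_n$ at scale $r$, independent of $n$. These uniform covering estimates are preserved in the Hausdorff-metric limit, so $K$ inherits the same density property along $L'$. A density-point / Baire-type refinement, exactly as in the proof of \cite[Prop.~2.2]{feng2014affine}, then promotes uniform near-maximal covering into a nondegenerate sub-interval of $L'$ contained in $K$.

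The main obstacle is Step 3: the dimension-one condition alone does not force an interval, and one must use both the quantitative separation provided by the OSC and the fact that the rescaled slices $F\cap L_n$ remain dimension one with \emph{uniform} covering bounds. I expect this step to require the most care and to mirror closely the one-dimensional argument in \cite[Proof of Thm.~1.1]{feng2014affine}, transposed to our two-dimensional setting via Steps 1 and 2 above.
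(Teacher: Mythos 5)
Your Steps 1 and 2 are correct: the nested cylinder selection, the identification $L_n=\phi_{I_n}^{-1}(L)=O_{I_n}^{-1}(V)+s_n$, and the compactness argument giving a limit line $L'$ and Hausdorff limit $K\subseteq F\cap L'$ are all fine, and the ``moreover'' clause does follow immediately from the structure of $V_n$. This is a genuinely different set-up from the paper, which does not take Hausdorff limits of slices but instead works on the parametrizing interval $[0,1]$.

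The gap is in Step 3. You claim that the OSC together with $\dim(F\cap L_n)=1$ yields covering lower bounds of the form $N(F\cap L_n,r)\geq r^{-1+o(1)}$ \emph{uniformly in} $n$, and that these pass to the Hausdorff limit $K$. The second half of this would indeed be fine if the first half held (uniform lower bounds on covering numbers at a fixed scale do survive Hausdorff convergence). But the first half does not follow from what you have. The hypothesis $\dim(F\cap L_n)=1$ is an asymptotic statement at scales $r\to 0$ \emph{for each fixed $n$}; there is no uniformity in $n$. Indeed $F\cap L_n$ is a rescaled copy of $\phi_{I_n}(F)\cap L$, so $N(F\cap L_n,r)=N(\phi_{I_n}(F)\cap L,\,r\alpha_{I_n})$; since $\alpha_{I_n}\to 0$, you are examining covering numbers of a fixed-dimension set at scales tending to $0$, and if $\mathcal H^1(F\cap L)=0$ (which dimension $1$ does not exclude), these can grow strictly slower than $1/r$ with no uniform threshold. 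Nothing about the OSC rescues this: the OSC controls the geometry of $F$, not the metric thickness of one-dimensional slices. Without uniform lower bounds, the Hausdorff limit $K$ can easily have dimension $0$ (think of compact sets $X_n=\{0\}\cup C_n$ with $C_n$ a Cantor set of dimension $1$ contained in $[0,1/n]$: each $X_n$ has dimension $1$, but $X_n\to\{0\}$).

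What the paper does instead is replace the metric lower bound by a \emph{finitary, combinatorial} property that is preserved under limits. Parametrizing $L\cap[0,1]^2$ by $f:[0,1]\to L$ and viewing $[0,1]$ as the attractor of the binary IFS $\Psi$, the pigeonhole/counting argument of Feng--Huang--Rao (Proposition 2.2 of \cite{feng2014affine}) shows that $\dim(F\cap L)>s_n$ forces, for every $j$, the existence of a word $W_j$ with $|W_j|\geq j$ such that for \emph{every} level-$n$ dyadic interval $D\subseteq[0,1]$, $f(\psi_{W_j}(D))\cap F\neq\emptyset$. Covering $f(\psi_{W_j}([0,1]))$ by at most $N_0$ cylinders of $\Phi$ (Lemma \ref{Lemma N0}) and precomposing with the inverse cylinder maps gives $k_n\leq N_0$ bounded similarities $g_1^n,\dots,g_{k_n}^n:[0,1]\to\mathbb{R}^2$ with images in affine lines, such that $\bigl(\bigcup_i g_i^n(D)\bigr)\cap F\neq\emptyset$ for all $D\in D_n$. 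This ``every $n$-cell is hit'' property is closed under pointwise limits of the maps $g_i^n$, so after extracting a subsequence it gives $[0,1]\subseteq\bigcup_{i=1}^k g_i^{-1}(F)$, and Baire's theorem then produces the interval. If you want to keep your nested-cylinder/Hausdorff-limit framework, you would need an analogous finitary hitting statement for the slices $F\cap L_n$ (uniform in $n$), and establishing that is exactly where the $s_n$ counting argument must enter; the dimension-one hypothesis alone will not do it.
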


Let $n\in \mathbb{N}$, and define (as in \cite{feng2014affine})  $s_n<1$ to be the unique positive number such that
\begin{equation} \label{Equasion the numbers sn}
(2^n -1)\cdot (\frac{1}{2^n})^{s_n} =1.
\end{equation}
Also, recall that definition of $D_n$, the $n$-level dyadic partition of $\mathbb{R}$, defined in equation \eqref{Equastion dyadic}.
\begin{Claim}
Let $F$ be a self similar set generated by an IFS $\Phi$ with the OSC, and let $N_0$ be the number from Lemma \ref{Lemma N0}. Let $L \in AG(2,1)$ and $n\in \mathbb{N}$, and suppose that $\dim F\cap L >s_n$.

Then there are $0 <  k_n \leq N_0$ affine maps $g_i :[0,1] \rightarrow L_i $, $1\leq i \leq k_n$ where $L_i \in AG(2,1)$ such that:

  for every $n$-level dyadic interval $D\in D_n$, $D\subseteq [0,1]$,
\begin{equation*}
\left( \bigcup_{i=1} ^{k_n} g_i (D) \right) \bigcap F \neq \emptyset.
\end{equation*} 
Moreover, the maps $g_i$ are uniformly bounded independently of $n$.
\end{Claim}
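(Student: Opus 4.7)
The plan is to produce the family $\{g_i\}_{i=1}^{k_n}$ by invoking self-similarity of $F$ at scale $2^{-n}$ and then applying Lemma \ref{Lemma N0} to the cylinders of that scale which meet $L$. First I would parameterize $L$ by an affine isometry $h : \mathbb{R} \to L$, so that $\widetilde{F} := h^{-1}(F\cap L) \subseteq \mathbb{R}$ is a compact set of Hausdorff dimension strictly greater than $s_n$, contained in an interval whose length is bounded by $\sqrt{2}$ (since $F\cap L \subseteq [0,1]^2 \cap L$). A natural candidate for one of the $g_i$ will be an affine map of $[0,1]$ onto the smallest sub-interval of $L$ containing $F\cap L$; the point is that several auxiliary $g_i$ landing on nearby lines may be required to fill in gaps.

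Next I would fix $r = c(F)\cdot 2^{-n}$ (with $c(F)$ chosen so that every $\phi_I \in \Phi_r$ has $\text{diam}(\phi_I(F))$ between $2^{-n}/2$ and $2^{-n}$, say) and consider the stopping-time family $\Phi_r$ from \eqref{equation for Ar} together with $\mathcal{S}_L := \{\phi_I \in \Phi_r : \phi_I(F)\cap L \neq \emptyset\}$. For each $\phi_I\in\mathcal{S}_L$ the pulled-back line $L_I := \phi_I^{-1}(L)$ is an affine line and $\phi_I(F\cap L_I) = \phi_I(F)\cap L$, so $F\cap L_I$ is a rescaled copy (with factor $\alpha_I^{-1} \sim 2^n$) of the trace $\phi_I(F)\cap L$ sitting inside $F$. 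I would then define, for each $\phi_I \in \mathcal{S}_L$, an affine map $g_I : [0,1]\to L_I$ by composing $\phi_I^{-1}\circ h$ with the affine scaling $[0,1]\to \mathbb{R}$ that targets the interval $h^{-1}(\phi_I(F)\cap L)$ of length $\lesssim 2^{-n}$; this yields $\|g_I\| \sim \alpha_I^{-1}\cdot 2^{-n} = O(1)$, giving the required uniform boundedness.

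The role of Lemma \ref{Lemma N0} is to collapse the potentially numerous maps $\{g_I\}_{\phi_I\in \mathcal{S}_L}$ into at most $N_0$ representatives: by the lemma, any fixed point on $L$ meets at most $N_0$ of the cylinders $\phi_I(F)$ with $\phi_I\in\Phi_r$, so along the direction of $L$ we can color $\mathcal{S}_L$ with $N_0$ colors in such a way that for each color class the translates of the corresponding images are pairwise disjoint on $L$ (equivalently, they can be strung together into one affine parameterization of $[0,1]$ after a further identification of lines). Grouping by color produces $k_n \leq N_0$ affine maps $g_1,\dots,g_{k_n}$ into lines $L_1,\dots,L_{k_n}$, each with bounded norm.

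The main obstacle and final step is verifying the covering property: for every dyadic interval $D\in D_n$ with $D\subseteq [0,1]$, at least one $g_i(D)$ meets $F$. For this I would argue by contradiction: if some dyadic $D$ were missed, then (after transporting back through the color-class construction) $\widetilde{F}$ could be covered by at most $2^n-1$ intervals of length $2^{-n}$, and by iterating this covering against the self-similar decomposition $F = \bigcup_{\phi_I\in \Phi_r}\phi_I(F)$ and applying it to the pulled-back traces $F \cap L_I$ (each of which carries the same dimensional excess over $s_n$ thanks to Theorem \ref{Theorem farkas} and the rescaling $\phi_I^{-1}$), we obtain a cover of $\widetilde{F}$ of $s_n$-content bounded above by $(2^n-1)\cdot (2^{-n})^{s_n} = 1$ at every level, forcing $\dim_H \widetilde{F} \leq s_n$, contradicting the hypothesis. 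The delicate point, which I expect to be the main technical obstacle, is making the iterated covering argument precise while keeping track of the geometry of the pulled-back lines $L_I$ and the bounded-overlap estimate from Lemma \ref{Lemma N0}.
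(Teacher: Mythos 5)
Your approach diverges from the paper's in a way that leaves a genuine gap. The paper's proof first uses the definition of $s_n$ together with the fact that the binary IFS $\Psi=\{z\mapsto z/2,\, z\mapsto (z+1)/2\}$ on $[0,1]$ has similarity dimension $1$ to locate a single dyadic sub-interval $\psi_{W_j}([0,1])$ of $[0,1]$ (at an arbitrarily fine scale $2^{-|W_j|}$) on which the trace is $n$-saturated: for every $D\in D_n$, $D\subseteq[0,1]$, one has $f(\psi_{W_j}(D))\cap F\neq\emptyset$. This is a counting argument purely on the dyadic tree of $[0,1]$, quoted from Proposition~2.2 of \cite{feng2014affine}, and it happens \emph{before} any separation lemma is invoked. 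Only then is Lemma~\ref{Lemma N0} (via Lemma~\ref{Lemma Restrict map rescale}) applied at scale $r_j\sim 2^{-|W_j|}$ to the single small arc $f(\psi_{W_j}([0,1]))$, which has diameter comparable to $r_j$ and therefore meets at most $N_0$ cylinders of $\Phi_{r_j}$; pulling back by the corresponding cylinder maps $h_{i,n}$ yields the maps $g_i = h_{i,n}^{-1}\circ f\circ \psi_{W_j}$, and the uniform bounds follow from the comparability of scales.

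You instead work at scale $r\sim 2^{-n}$ from the outset and consider the entire family $\mathcal{S}_L$ of cylinders in $\Phi_r$ meeting $L$, which has cardinality up to about $2^n$. The reduction from these $\sim 2^n$ candidate maps down to $k_n\leq N_0$ maps is handled by a ``coloring'' argument which is not sound: the cylinders $\phi_I$ in a given color class pull $L$ back to distinct lines $L_I=\phi_I^{-1}(L)$ sitting at distinct locations in $[0,1]^2$, so they cannot be ``strung together into one affine parameterization'' of $[0,1]$ into a single $L_i$. A single affine map $g_i:[0,1]\to L_i$ has one target line and cannot simultaneously realize the many disjoint rescaled pieces $\phi_I^{-1}(\phi_I(F)\cap L)$ with their correct placements inside $F$; yet the Claim's conclusion requires exactly this. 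The iterated $(2^n-1)$-covering contradiction you sketch afterward is also applied at the wrong object: in the paper it runs on the dyadic tree of $[0,1]$ (to find the good scale $W_j$), not entangled with the self-similar decomposition of $F$ and the bounded-overlap count, which is precisely the ``delicate point'' you flag but do not resolve. Finally, the appeal to Theorem~\ref{Theorem farkas} is a red herring: that result concerns linear images of self-similar sets and plays no role in the proof of this Claim.
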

\begin{proof}
We  consider $[0,1]$ as the attractor of the IFS $\Psi = \lbrace \psi_i \rbrace_{i=1} ^2$, where $\psi_i (z)= \frac{z+i-1}{2}$. Let $f:[0,1]\rightarrow L\cap [0,1]^2$ be an affine similarity parametrization\footnote{This map is just $f(x)=\alpha\cdot O\cdot (x,0)+t$, where $\alpha=||f||$ is the length of $L\cap [0,1]^2$ and $O\in SO(\mathbb{R}^2)$,$t\in \mathbb{R}^2$ are parameters chosen according to the the affine line $L$.} of the segment $L\cap [0,1]^2$. Let $n\in \mathbb{N}$ be such that $\dim f([0,1])\cap F = \dim L\cap F > s_n$.  

 We first claim that for every $j \in \mathbb{N}$ there is a word $W_j \in \lbrace 1,2\rbrace^*$ with $|W_j| \geq j$ such that for every $D\in D_n$, $D\subseteq [0,1]$, $f(\psi_{W_j} (D))\cap F \neq \emptyset$. For a proof, which relies on the definition of $s_n$ and on the fact that the similarity dimension of $\Psi$ is $1=\dim [0,1]$, see the beginning of the proof of Proposition 2.2 in \cite{feng2014affine}.

 Now, let $p_1 \in \mathbb{N}$ satisfy $2^{-p_1 +1} < \alpha_1$, where $\alpha_1$ is the minimal contraction ratio of any map in $\Phi$. Let $j>p_1$, and let $W_j \in \lbrace 1,2\rbrace^*$ be the word we found in the previous paragraph. Fix $r_j = 2\cdot 2^{-|W_j|}$, then since $j>p_1$, we have $r_j < \alpha_1$. Next, by Lemma \ref{Lemma Restrict map rescale} we find that 
\begin{equation} \label{Equation left hand side}
| \lbrace \phi_I \in \Phi_{r_j} : f(\psi_{W_j} ([0,1]))\cap \phi_I (F) \neq \emptyset \rbrace | \leq N_0
\end{equation}
where $\Phi_{r_j}$ was defined in \eqref{equation for Ar} and  $N_0$ in the number from Lemma \ref{Lemma N0}.  Enumerate these maps from $1$ to $k_n \leq N_0$, and denote them $h_{i,n}$, $1\leq i \leq k_n$ (Note that the $j$ we are using only depends on $n$)

Finally, by our choice of $W_j$, the $k_n$ maps $g_i = h_{i,n} ^{-1} \circ f \circ \psi_{W_j}$ for $1 \leq i \leq k_n \leq N_0$ satisfy that for every $D\in D_n, D\subseteq [0,1]$,
\begin{equation*}
\left( \bigcup_{i=1} ^{k_n} g_{i} (D) \right) \bigcap F \neq \emptyset.
\end{equation*}
Moreover, since for every $i$
\begin{equation*}
||h_{i,n} || \leq r_j , \quad  r_j \leq ||h_{i,n} || \cdot \alpha_1 ^{-1}
\end{equation*}
then   we have (recalling that $h_{i,n}$ and $f$ are similarities))
\begin{equation*}
\frac{||f|||}{2} = r_j ^{-1} \cdot ||f|| \cdot  2^{-|W_j|} \leq || g_{i} ||\leq 2^{-|W_j|} \cdot ||f||\cdot  r_j ^{-1} \alpha_1 ^{-1} = \frac{||f||}{2\cdot \alpha_1},
\end{equation*}
which is uniformly bounded (note that $||f||$, the similarity ratio of $f$, is just the length of $L\cap [0,1]^2$). It also follows that the translation part of each one of the maps $g_{i}$ is uniformly bounded, since $g_{i} ([0,1])\cap F \neq \emptyset$. Finally, it is clear that the maps $g_{i}$ take $[0,1]$ into an affine line in the plane, since $F$ is self similar. 
\end{proof}  

$$ $$
\textbf{Proof of Proposition \ref{Proposition full dimension of fiber implies connected}} By our assumption, $\dim F\cap L =1, F\subseteq [0,1]^2$. Let $f:[0,1]\rightarrow [0,1]^2\cap L$ be an  affine similarity parametrization as in the previous proof, so that $f([0,1])=L\cap [0,1]^2$. Then for every $n\in \mathbb{N}$ we have $\dim f([0,1])\cap F >s_n$. Thus, for every $n$ let $\lbrace g_1 ^{n},...,g_{k_n} ^n \rbrace$ be the maps produced via the previous Claim.  Move to a sub sequence $\lbrace g_1 ^{n_l},...,g_{k_{n_l}} ^{n_l} \rbrace$ such that $k_{n_l} \equiv k$ for some $k$. Then move to another subsequence (without changing the notation) such that $g_i ^{n_l} \rightarrow g_i$ for every $1\leq i \leq k$, and $g_i : [0,1] \rightarrow L_i \cap B(0,\beta)$ are affine maps, where $L_i \in AG(2,1)$, and $\beta >0$. Here we are using the uniform bounds we established for both the translation and the contraction part of the $g_i ^n$'s, and the fact that each $g_i$ has its image in some affine line.

We now claim that $[0,1] \subseteq \bigcup_{i=1} ^k g_i ^{-1} (F)$. Indeed, let $x\in [0,1]$, and for every $n$ let $D_n (x) \in D_n$ be the dyadic interval containing $x$. Then $ \lbrace x \rbrace  = \bigcap_{l=1} ^\infty D_{n_l}(x)$. In addition, for every $n_l$ there is a map $g^{n_l} _{i_l}$ such that $d(g^{n_l} _{i_{l}} (x), F) \leq 2^{-n_l}\cdot c$, for some uniform constant $c>0$. Indeed, by the previous Claim, we may pick  $g^{n_l} _{i_l}$ as the map such that $g^{n_l} _{i_l} (D_{n_l} (x)) \cap F \neq \emptyset$. Moving to another sub sequence such that $i_{n_l} \equiv i$ is fixed, we see that both $g_i ^{n_l} (x) \rightarrow g_i (x)$, and that $d(g_i (x),F)=0$. Since $F$ is closed it follows that $g_i (x)\in F$. The claim is proved.

Finally, from Baire's Theorem it follows that there are $0\leq a < b \leq 1$ such that $(a,b) \subseteq g_i ^{-1} (F)$ for some $i$. Since $g_i$ is a map taking $[0,1]$ to an affine line, this finishes the proof. \hfill{$\Box$}

$$ $$
Part  (2) of Proposition \ref{Theorem saturation for self similar measures} is now an immediate consequence of the following Corollary.
\begin{Corollary} \label{Corollary SSC implies dimension gap for slices}
Let $F \subseteq \mathbb{R}^2$ be a self similar set generated by an IFS $\Phi$ with the SSC. Then for every line $L \in AG(2,1)$ we have $\dim F\cap L < 1$.
\end{Corollary}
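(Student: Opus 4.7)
The plan is to argue by contradiction, assuming that $\dim F \cap L = 1$ for some affine line $L \in AG(2,1)$. The key input is Proposition \ref{Proposition full dimension of fiber implies connected}, just established, which upgrades the dimension-$1$ hypothesis to the existence of an affine line $L' \in AG(2,1)$ such that $F \cap L'$ contains a non-degenerate interval $J$. Once an actual interval is known to sit inside $F$, the SSC should yield a contradiction very quickly, since the SSC is classically incompatible with any non-trivial connected subset of the attractor.

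To implement this, I would show that the SSC forces $F$ to be totally disconnected, while $J$ is a connected subset of positive diameter. Writing $F = \bigcup_{i=1}^l \phi_i(F)$, the SSC guarantees that the compact pieces $\phi_i(F)$ are pairwise at a positive minimal distance $\rho > 0$. Hence any connected subset $K \subseteq F$ must be contained in a single cylinder $\phi_{i_1}(F)$. Applying $\phi_{i_1}^{-1}$ and repeating the argument to the connected set $\phi_{i_1}^{-1}(K) \subseteq F$, we produce a sequence of indices $i_1, i_2, \ldots$ such that
\begin{equation*}
K \subseteq \phi_{I_n}(F), \qquad I_n = (i_1, \ldots, i_n), \quad n \in \mathbb{N}.
\end{equation*}
Since $\mathrm{diam}(\phi_{I_n}(F)) \to 0$ as $n \to \infty$, the connected set $K$ must consist of a single point.

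In particular, the interval $J$ produced by Proposition \ref{Proposition full dimension of fiber implies connected} would have to reduce to a single point, contradicting its non-degeneracy. The argument presents no real obstacle: its entire content is the reduction to Proposition \ref{Proposition full dimension of fiber implies connected}, combined with the standard observation that under the SSC the attractor inherits total disconnectedness from the fact that cylinder diameters shrink to zero while cylinders at every fixed generation remain separated.
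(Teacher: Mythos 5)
Your proof is correct and matches the argument the paper implicitly relies on: the Corollary is stated right after Proposition \ref{Proposition full dimension of fiber implies connected} precisely because, under the SSC (which in particular implies the OSC needed by that Proposition), a $1$-dimensional intersection with a line would force $F$ to contain a non-degenerate interval, yet the SSC makes $F$ totally disconnected via the standard shrinking-cylinders argument you give. This is essentially the paper's intended deduction, just written out in full.
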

$$ $$

\textbf{Remark} Note that we can in fact deduce a stronger result. Let $F \subseteq \mathbb{R}^2$ be a self similar set generating by an IFS $\Phi$ with the SSC, and let $s_n$ be the sequence of numbers from equation \eqref{Equasion the numbers sn}. Then, as a consequence of the proof of Proposition \ref{Proposition full dimension of fiber implies connected}  there exits some $m\in \mathbb{N}$ such that
\begin{equation*}
\sup_{V\in G(2,1), t\in \mathbb{R}^2} \dim (F\cap (V+t) ) < s_m<1
\end{equation*}

\section{Proof of Theorem \ref{Theorem zero dimension of self embeddings} and  of part (1) of Theorem \ref{Theorem embeddings} } \label{Proof of self embeddings}
\textbf{Proof of Theorem \ref{Theorem zero dimension of self embeddings}} Assume the contrary is true. Then $\dim_H \mathcal{E}(F)>0$. Apply Frostman's Lemma to obtain a compactly supported measure $\nu \in P(\mathcal{E}(F))$ such that $\overline{\dim_e} \nu \geq  \underline{\dim_H} \nu > \epsilon' >0$.  By our assumptions there exists a self similar measure $\mu \in P(F)$ such that $\underline{\dim_H} \mu = \dim_H F$, and such that $\mu$ does not admit a $1$-slicing. Denote $X = \text{supp}(\nu)$ and let $\epsilon_1 = \min \lbrace \epsilon', \epsilon(\mu,X) \rbrace$, where $\epsilon(\mu,X)>0$ is the number from the statement of Theorem \ref{Theorem Inverse Theorem main application}.  Therefore, we may apply Theorem \ref{Theorem Inverse Theorem main application} for $\epsilon_1$ to obtain that the measure $\nu.\mu$ satisfies that for some $\delta(\epsilon_1)=\delta>0$,
\begin{equation*}
\overline{\dim_e} \nu.\mu > \overline{\dim_e} \mu + \delta \geq  \underline{\dim_H} \mu + \delta = \dim F + \delta.
\end{equation*}
Note the use of equation \eqref{Equation dimension}.

However, $\nu.\mu$ is a measure that is supported, by definition, on $F$. Therefore,  by equation \eqref{Equation dimension} again we have
\begin{equation*}
\dim_B F \geq \overline{\dim_e} \nu.\mu >  \dim_H F + \delta.
\end{equation*}
However, $F$ is a self similar set, so $\dim_B F = \dim_H F$. Hence the above equation yields the desired contradiction. It follows that $\dim_H \mathcal {E}(F) =0$, as required. \hfill{$\Box$}

$$ $$

We now proceed to prove part (1) of Theorem \ref{Theorem embeddings}. Let us first give a general overview of the proof. Assume towards a contradiction that there exists an affine self embedding of $F$ that is not a similarity. As we are assuming $|G_\Phi|=\infty$,  we may assume one member of $\Phi$ has an irrational rotation as its orthogonal part (see Section \ref{Section self similar sets}). We now use the "restrict - map - rescale" heuristic (see Section \ref{Section restrict map resacles}), restricting to cylinders sets defined by iterating this specific cylinder map, mapping them using our non similarity self embedding into $F$, and rescaling. Since the linear part of the embedding is not an orthogonal matrix,  this procedure generates a large set (of Hausdorff dimension at least $\frac{1}{2}$) of self embeddings of $F$. Once this point is established (which requires some work), we apply Theorem \ref{Theorem zero dimension of self embeddings} and obtain a contradiction.

We will use the following Lemma in the proof. Define a map $h: GL(\mathbb{R}^2)\rightarrow \mathbb{R}$ by
\begin{equation} \label{Equation for h}
h(M) =  ||M(e_1)||^{-1} \cdot ||M||
\end{equation}
where the norm on $M$ is the operator norm (with respect to the $||\cdot ||_2$ norm), $e_1 = (1,0)\in \mathbb{R}^2$ and the norm on the vector $M(e_1)$ is the usual $||\cdot ||_2$ norm.
\begin{Lemma} \label{Lemma h is Lip}
Suppose $\Sigma \subset GL(\mathbb{R}^2)$ satisfies that for some $C,c>0$,
\begin{equation*}
 \min_{M\in \Sigma} ||M(e_1)||  > c ,\quad  \max_{M\in \Sigma} ||M|| < C.
\end{equation*}
Then $h|_\Sigma$ is locally a Lipschitz function, where $h$ was defined in equation \eqref{Equation for h}.
\end{Lemma}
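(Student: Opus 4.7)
The plan is to decompose $h$ as a product $h(M) = v(M) \cdot u(M)^{-1}$, where $u(M) = \|M(e_1)\|_2$ and $v(M) = \|M\|$, and then verify that each factor is Lipschitz on $\Sigma$ (in the operator norm) with explicit constants depending on $c$ and $C$.

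First I would establish that $v$ and $u$ are each $1$-Lipschitz on all of $GL(\mathbb{R}^2)$. For $v$ this is just the reverse triangle inequality for the operator norm: $|v(M)-v(N)|=|\,\|M\|-\|N\|\,|\le \|M-N\|$. For $u$, one writes
\begin{equation*}
\bigl|\,\|M(e_1)\|_2 - \|N(e_1)\|_2\,\bigr|\le \|(M-N)(e_1)\|_2 \le \|M-N\|\cdot\|e_1\|_2 = \|M-N\|.
\end{equation*}
Next I would pass from $u$ to $u^{-1}$ using the hypothesis $u(M)>c$ on $\Sigma$. Since $x\mapsto 1/x$ has derivative $-1/x^2$, it is Lipschitz on $[c,\infty)$ with constant $1/c^2$, so
\begin{equation*}
|u(M)^{-1}-u(N)^{-1}| \le \frac{1}{c^2}\,|u(M)-u(N)| \le \frac{1}{c^2}\,\|M-N\|
\end{equation*}
for all $M,N\in\Sigma$.

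Finally I would combine the two estimates via the standard product rule. Using $v(M)\le C$ and $u(N)^{-1}\le c^{-1}$ on $\Sigma$,
\begin{equation*}
|h(M)-h(N)| \le v(M)\,|u(M)^{-1}-u(N)^{-1}| + u(N)^{-1}\,|v(M)-v(N)| \le \Bigl(\frac{C}{c^2}+\frac{1}{c}\Bigr)\|M-N\|.
\end{equation*}
This gives a uniform (hence a fortiori local) Lipschitz constant for $h|_\Sigma$ in the operator norm, and by the equivalence of the operator norm with the $\mathbb{R}^4$-norm noted in Section~\ref{Section self similar sets}, also in the latter. There is essentially no obstacle here; the only point worth noting is the role played by the lower bound $\|M(e_1)\|>c$, without which $u^{-1}$ fails to be Lipschitz near the singular locus, which is precisely why the statement is phrased with the hypothesis on $\Sigma$.
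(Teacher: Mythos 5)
Your proof is correct and follows essentially the same strategy as the paper: decompose $h$ into the two factors $\|M\|$ and $\|M(e_1)\|^{-1}$, use the hypotheses to bound them, and conclude via the product rule. The only difference is cosmetic — where the paper invokes continuous differentiability of $s(M)=\|M(e_1)\|^{-1}$ to get local Lipschitzness, you prove Lipschitz estimates directly with explicit constants, which in fact yields the slightly stronger conclusion that $h|_\Sigma$ is uniformly (not merely locally) Lipschitz.
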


\begin{proof}
First, we note that the function $s(M)= ||M(e_1)||^{-1}$ is continuously  differentiable on $\Sigma$ by the assumptions $\min_{M\in \Sigma} ||M(e_1)|| > c$ and $\max_{M\in \Sigma} ||M(e_1)||< C$. Therefore,  $s|_\Sigma$ is locally Lipschitz. We also note that our assumption mean that the functions $s|_\Sigma$ and $||\cdot ||$-restricted-to-$\Sigma$ are bounded. Therefore, the function $h$ is locally a Lipschitz function, as a product of two locally Lipschitz  bounded functions.
\end{proof}

\textbf{Proof of Theorem \ref{Theorem embeddings} part (1)} 

\textbf{Step 1 - assumption towards a contradiction}
Recall that we are assuming $|G_\Phi|=\infty$. So, we may assume, without the loss of generality, that $\phi_1 \in \Phi$, $\phi_1 (z)=\alpha\cdot O (z)+t'$, satisfies that $O \in SO(\mathbb{R}^2)$ is an irrational rotation, i.e $\lbrace O^n \rbrace_{n\in \mathbb{N}}$ is dense in $SO(\mathbb{R}^2)$ (see section \ref{Section self similar sets}). Write $g(z)=Az+t, A\in GL(\mathbb{R}^2)$, and suppose $g(F)\subseteq F$.

To prove the full statement of  Theorem,  it suffices to show that $A$ is a similarity, that is, that $A \in \lambda \cdot  O(\mathbb{R}^2)$ for some scalar $\lambda \neq 0$.  Indeed, if this is  the case,  we may  apply Theorem 4.9 from \cite{elekes2010self} to obtain the desired result. 

So, assume towards a contradiction that $A\notin \lambda \cdot  O(\mathbb{R}^2)$ for all $\lambda>0$. In particular,  denoting the unit sphere $S^1 \subset \mathbb{R}^2$, the ellipse $A(S^1)$ is not a circle.

\textbf{Step 2 - generating more self embeddings} Let $n\in \mathbb{N}$ and define 
\begin{equation} \label{Eq for rn}
r_n = ||A||\cdot \alpha^n \cdot \text{diam}(F),
\end{equation}
and consider the family of $r_n$-cylinders $\Phi_{r_n}$, which was defined in \eqref{equation for Ar}. By Lemma \ref{Lemma Restrict map rescale} we see that
\begin{equation} \label{Equatio LHS again}
| \lbrace \phi_I \in \Phi_{r_n} : g(\phi_{1^n} (F)) \cap \phi_I (F) \neq \emptyset \rbrace | \leq N_0.
\end{equation}
where $N_0$ is the number from Lemma \ref{Lemma N0}. So, for any $n$ we obtain that there are $1\leq k_n \leq N_0$ maps in the set in equation \eqref{Equatio LHS again}. Written explicitly, these are 
\begin{equation*}
\phi_{I_{i,n}}(\cdot ) = \alpha_{I_{i,n}} O_{I_{i,n}} (\cdot ) + t'_{I_{i,n}}, \quad 0<\alpha_{I_{i,n}}<1, \quad O_{I_{i,n}} \in G_\Phi, \quad t' _{I_{i,n}} \in \mathbb{R}^2, \quad  i=1,..,k_n
\end{equation*}
 so that 
\begin{equation*}
g(\phi_{1^n} (F)) \subseteq \bigcup_{i=1} ^{k_n} \phi_{I_{i,n}} (F)
\end{equation*}
which implies that
\begin{equation} \label{equation union}
F \subseteq \bigcup_{i=1} ^{k_n} \phi_{1^n} ^{-1} \circ g^{-1}  \circ \phi_{I_{i,n}} (F).
\end{equation}

\begin{Claim} \label{Claim maps}
For every $R \in SO(\mathbb{R}^2)$ there exists:
\begin{itemize}
\item  A number $k(R)\in \mathbb{N}$ such that $1\leq k(R) \leq N_0$.

\item  $k(R)$ matrices $O_i \in O(\mathbb{R}^2)$ for $1\leq i \leq k(R)$.

\item  $k(R)$ contractions $||A||\cdot \text{diam}(F)\cdot \alpha_{\min} \leq  \alpha_i \leq ||A|| \cdot  \text{diam}(F)$, $1\leq i \leq k(R)$ where $\alpha_{\min}$ is the minimal contraction ratio among the contraction ratios of the maps in $\Phi$,
\end{itemize}
such that
\begin{equation*}
F \subseteq \bigcup_{i=1} ^{k(R)} \left( \alpha_i \cdot R^{-1} \cdot A^{-1} \cdot O_i (F) + t_i \right), \quad \text{ for some translations } t_i \in \mathbb{R}^2, 1\leq i \leq k(R).
\end{equation*}
\end{Claim}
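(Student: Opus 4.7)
Starting from equation \eqref{equation union}, I would first compute the explicit form of the affine map $\phi_{1^n}^{-1}\circ g^{-1}\circ \phi_{I_{i,n}}$. Writing $\phi_{1^n}(z)=\alpha^n O^n(z)+s^{(1)}_n$ and $\phi_{I_{i,n}}(z)=\alpha_{I_{i,n}}O_{I_{i,n}}(z)+s^{(i)}_n$, a direct calculation gives
\[
\phi_{1^n}^{-1}\circ g^{-1}\circ \phi_{I_{i,n}}(z)\;=\;\bigl(\alpha_{I_{i,n}}\alpha^{-n}\bigr)\,O^{-n}A^{-1}O_{I_{i,n}}(z)+t_{i,n}
\]
for some translation $t_{i,n}\in\mathbb{R}^2$. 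Since $\phi_{I_{i,n}}\in \Phi_{r_n}$ with $r_n=||A||\,\alpha^n\,\text{diam}(F)$, the very definition of $\Phi_{r_n}$ in \eqref{equation for Ar} forces
\[
\alpha_{\min}\cdot ||A||\,\text{diam}(F)\;<\;\alpha_{I_{i,n}}\alpha^{-n}\;\leq\; ||A||\,\text{diam}(F),
\]
which is precisely the range demanded for $\alpha_i$ in the statement of the claim.

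Given $R\in SO(\mathbb{R}^2)$, the plan is to extract a subsequence along which every relevant parameter converges and $O^{-n}$ approximates $R^{-1}$. Using density of $\{O^n\}_{n\in\mathbb{N}}$ in $SO(\mathbb{R}^2)$ (Step~1), I first fix $n_j\to\infty$ with $O^{-n_j}\to R^{-1}$. Along $n_j$ I then thin to a sub-subsequence, not renamed, such that: (a) $k_{n_j}\equiv k$ is constant with $1\leq k\leq N_0$; (b) for each $1\leq i\leq k$, the orthogonal matrices $O_{I_{i,n_j}}\in G_\Phi\subseteq O(\mathbb{R}^2)$ converge to some $O_i\in O(\mathbb{R}^2)$ by compactness; (c) the scalars $\alpha_{I_{i,n_j}}\alpha^{-n_j}$ converge to some $\alpha_i$ in the closed interval above; and (d) each translation $t_{i,n_j}$ converges to a limit $t_i\in\mathbb{R}^2$. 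The point (d) requires an argument: the intersection condition in \eqref{Equatio LHS again} gives $x_{i,n}\in F$ with $g(\phi_{1^n}(x_{i,n}))\in\phi_{I_{i,n}}(F)$, so $x_{i,n}$ lies both in $F$ and in $\phi_{1^n}^{-1}\circ g^{-1}\circ \phi_{I_{i,n}}(F)$; since this latter set is an affine image of $F$ with linear part of operator norm at most $||A||\,\text{diam}(F)\cdot ||A^{-1}||$, it has uniformly bounded diameter, and meeting the bounded set $F$ forces $t_{i,n}$ into a bounded subset of $\mathbb{R}^2$, so a further subsequence converges.

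With all four convergences in place, the affine maps $z\mapsto (\alpha_{I_{i,n_j}}\alpha^{-n_j})\,O^{-n_j}A^{-1}O_{I_{i,n_j}}\,z+t_{i,n_j}$ converge uniformly on compact sets to $z\mapsto \alpha_i\cdot R^{-1}\cdot A^{-1}\cdot O_i\,z+t_i$, so their images on the compact set $F$ converge in the Hausdorff metric. Invoking part~(2) of Proposition~\ref{Proposition - Hausdorff metric0} to pass to the limit inside the union of a fixed number $k$ of Hausdorff-convergent sets, equation \eqref{equation union} becomes
\[
F\;\subseteq\;\bigcup_{i=1}^{k}\bigl(\alpha_i\cdot R^{-1}\cdot A^{-1}\cdot O_i(F)+t_i\bigr),
\]
which is the claim with $k(R):=k$. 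The delicate step I anticipate is item (d): producing a single uniform bound on the translations $t_{i,n}$ that lets the compactness argument close. Once that is in hand, the remainder is a standard diagonal extraction inside the three compact spaces $SO(\mathbb{R}^2)\ni R^{-1}$, $O(\mathbb{R}^2)\ni O_i$, and the closed bounded intervals in which the scalars $\alpha_i$ and translation coordinates live.
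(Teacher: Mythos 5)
Your argument is correct and follows the same route as the paper's own proof of this claim: pass to a subsequence along which $O^{n}$ approaches the target rotation, then further thin so that the count $k_n$, the orthogonal factors $O_{I_{i,n}}$, the scalar factors $\alpha_{I_{i,n}}\alpha^{-n}$ (bounded by the definition of $\Phi_{r_n}$), and the translations (bounded via the intersection condition in \eqref{Equatio LHS again}) all converge, and finally pass to the limit in \eqref{equation union} using Hausdorff convergence. The step you flagged as delicate, bounding the translations, is handled in the paper by exactly the observation you make: each map $\phi_{1^{n}}^{-1}\circ g^{-1}\circ\phi_{I_{i,n}}$ has uniformly bounded linear part and its image meets $F$, so the translation stays in a compact set.
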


\begin{proof}
Let $R\in SO(\mathbb{R}^2)$, and recall that $O$ is an irrational rotation. Therefore, there is some subsequence $n_l$ such that $O^{n_l}$ converges to $R$. Move to another subsequence so that $k_{n_l}$ is constant for all large enough $l$ (without the loss of generality this already happens for $n_l$), and denote this constant $k(R)$. For every large enough  $l \in \mathbb{N}$ and $1\leq i \leq k(R)$,  note that the linear part of the affine map $\phi_{1^{n_l}} ^{-1} \circ g^{-1}  \circ \phi_{I_{i,n_l}} $ is $\alpha_{I_{i, n_l}} \cdot \alpha^{-n_l} \cdot O^{-n_l} \cdot A^{-1} \cdot O_{I_{{i,n_l}}}$. Move to another subsequence so that $O_{I_{i,n_l}}$ converges to some $O_i \in O(\mathbb{R}^2)$ for every $1\leq i \leq K(R)$ (again suppose this happens for our subsequence $n_l$). Also, for every $1\leq i \leq k(R)$ and every $l\in \mathbb{N}$ we have (by the choice of $r_n$ in equation \eqref{Eq for rn})
\begin{equation*}
||A||\cdot \text{diam}(F)\cdot \alpha_{\min} \leq  \alpha_{I_{i, n_l}} \cdot \alpha^{-n_l} \leq ||A|| \cdot  \text{diam} (F).
\end{equation*}
So by moving to another subsequence we can make sure that $\alpha_{I_{i, n_l}} \cdot \alpha^{-n_l}$ converges for every $1 \leq i \leq k(R)$. Since by definition (recall \eqref{Equatio LHS again}) $\phi_{1^{n_l}} ^{-1} \circ g^{-1}  \circ \phi_{I_{i,n_l}} (F)\cap F \neq \emptyset$ for every $1\leq i \leq k(R)$, we see that  the translation part of $\phi_{1^{n_l}} ^{-1} \circ g^{-1}  \circ \phi_{I_{i,n_l}}$ is also uniformly bounded. Thus, by moving to another subsequence we can make sure that for every $1\leq i \leq k(R)$ the affine map $\phi_{1^{n_l}} ^{-1} \circ g^{-1}  \circ \phi_{I_{i,n_l}}$ converges pointwise to an affine map. Furthermore, its linear part has the form $\alpha_i \cdot  R^{-1} \cdot  A^{-1} \cdot O_i$ for parameters $\alpha_i, R,O_i,t_i$ as in the statement of the claim. Note that we may assume the convergence is uniform on compact sets, by an application of the Arzel\'{a}-Ascoli Theorem.  Finally, since  equation \eqref{equation union} holds for every $n_l$ for these $K(R)$ maps, it also holds for their limits (using the uniform convergence on $F$ of each sequence of maps), concluding the proof of the Claim.
\end{proof}

By Claim \ref{Claim maps} and using Baire's Theorem, we obtain that for every $R \in SO(\mathbb{R}^2)$ there exist some  $O \in O(\mathbb{R}^2)$, a contraction $||A||\cdot \text{diam}(F)\cdot \alpha_{\min} \leq  \alpha \leq ||A|| \cdot  \text{diam}(F)$, some $t \in \mathbb{R}^2$, and a cylinder set $\phi_I (F)$  such that
\begin{equation} \label{Equation for gamma}
 \phi_I (F) \subseteq \alpha \cdot R^{-1} A^{-1} O (F) +t.
\end{equation}
Denote the set of maps we have thus obtained on the right hand side of equation \eqref{Equation for gamma} (i.e. the maps $\alpha \cdot R^{-1} A^{-1} O (\cdot ) +t$) by $\Gamma$. Note that we  have manufactured, for every $R\in SO(\mathbb{R}^2)$, an affine self  embedding of $F$, by moving sides in equation  \eqref{Equation for gamma}. 

\textbf{Step 3 - $\Gamma$ has dimension at least 1} Note that $\Gamma$ is contained in the set
\begin{equation} \label{Equation for Gamma}
\lbrace \alpha \cdot R^{-1} A^{-1} O (\cdot ) +t : ||A||\cdot \text{diam}(F)\cdot \alpha_{\min} \leq  \alpha \leq ||A|| \cdot  \text{diam}(F), \quad R\in SO(\mathbb{R}^2),O \in O(\mathbb{R}^2),  t\in \mathbb{R}^2 \rbrace.
\end{equation}
Let $\Sigma' = \Gamma^{-1} = \lbrace \gamma^{-1} : \gamma\in \Gamma\rbrace$. Then $\Sigma'$  is contained within the set
\begin{equation*}
\lbrace \alpha \cdot O A R (\cdot ) +t : \frac{1}{||A|| \cdot  \text{diam}(F)} \leq  \alpha \leq \frac{1}{||A||\cdot \text{diam}(F)\cdot \alpha_{\min}}, \quad R\in SO(\mathbb{R}^2),O \in O(\mathbb{R}^2),  t\in \mathbb{R}^2 \rbrace.
\end{equation*}
Denote by $\Sigma$ the set of all linear parts of the maps in $\Sigma'$. Then
\begin{equation} \label{Equation for Sigma}
\Sigma \subseteq \lbrace \alpha \cdot O A R  : \frac{1}{||A|| \cdot  \text{diam}(F)} \leq  \alpha \leq \frac{1}{||A||\cdot \text{diam}(F)\cdot \alpha_{\min}} , \quad R\in SO(\mathbb{R}^2) ,O \in O(\mathbb{R}^2) \rbrace.
\end{equation}

Therefore, for every $M\in \Sigma$ we have $M = \alpha \cdot O A R$ for $\alpha,O,R$ as in Equation \eqref{Equation for Sigma}, so
\begin{equation*}
|| M|| \leq ||A||\cdot \alpha \leq ||A|| \cdot \frac{1}{||A||\cdot \text{diam}(F)\cdot \alpha_{\min}}.
\end{equation*}
We also claim that $\min_{M\in \Sigma} ||M(e_1)|| \geq c>0$ for some $c>0$. Indeed, let $c$ denote the length of the minor semi axis of the ellipse $A(S^1) \subset \mathbb{R}^2$. Let $M\in \Sigma$. Then $M=\alpha OAR$ for  $\alpha,O,R$ as in equation  \eqref{Equation for Sigma}. Thus, $AR(e_1)\in A(S^1)$ and therefore $||AR(e_1)|| \geq c$. Since $O\in O(\mathbb{R}^2)$ we have $||OAR(e_1)|| = ||AR(e_1)||  \geq c$. Finally, 
\begin{equation*}
||M(e_1)|| = \alpha \cdot ||OAR(e_1)|| \geq \frac{1}{||A||\cdot \text{diam} (F)} \cdot ||OAR(e_1)|| \geq \frac{1}{||A||\cdot \text{diam} (F)} \cdot c >0.
\end{equation*}

We may now apply Lemma \ref{Lemma h is Lip} and conclude that the map $h:GL(\mathbb{R}^2) \rightarrow \mathbb{R}$ defined by
\begin{equation*}
h(M) = ||M(e_1)||^{-1} \cdot ||M||
\end{equation*}
is locally Lipschitz on $\Sigma$. We now use this fact to show that $\dim h(\Sigma) \geq 1$.

By the definition of $\Gamma$, for every $R\in SO(\mathbb{R}^2)$ there are  $\alpha>0, O\in O(\mathbb{R}^2)$ as in equation \eqref{Equation for Gamma} such that $\alpha R^{-1} A^{-1} O^{-1}$ is a linear part of some map in $\Gamma$. Therefore, by definition, $M = \alpha^{-1} OAR \in \Sigma$. Thus,
\begin{equation*}
h(M)= ||M(e_1)||^{-1} \cdot ||M|| = \alpha \cdot ||AR(e_1)||^{-1} \cdot \alpha^{-1} \cdot ||A|| = ||A|| \cdot ||AR(e_1)||^{-1}.
\end{equation*}
It follows that
\begin{equation} \label{Equation h sigma}
h(\Sigma) \supseteq \lbrace ||A||\cdot ||AR(e_1)||^{-1} : R\in SO(\mathbb{R}^2 \rbrace.
\end{equation}
We now show that the set on the right hand side of equation \eqref{Equation h sigma} is a connected interval. This is the only place where the assumption towards a contradiction that $A$ is not a similarity is used: by this assumption, $A(S^1)$ is an ellipse that is not a circle. In particular, the function $||\cdot ||:A(S^1)\rightarrow \mathbb{R}$ is continuous, non-constant, and strictly positive. Since for every $v\in A(S^1)$ there is some $R\in SO(\mathbb{R}^2)$ with $A(R(e_1))=v$, we see that  the set on the right hand side of equation \eqref{Equation h sigma} equals the non-degenerate interval $||A||\cdot \lbrace ||v||^{-1} :v\in A(S^1)\rbrace$. Thus, $h(\Sigma)$ contains an interval and therefore $\dim h(\Sigma) \geq 1$.

Finally, since $h$ is locally Lipschitz we see that $\dim \Sigma \geq 1$. Since $\Sigma$ is the set of linear parts of the maps in $\Sigma'$, we have $\dim \Sigma \geq \dim \Sigma' \geq 1$ (see section \ref{Section self similar sets}). Since inversion is a smooth operation on $GL(\mathbb{R}^2)$ (a polynomial in several variables, also note that $\det M >c'>0$ for some $c'$ for every $M\in \Sigma'$), $\Gamma = (\Sigma')^{-1}$ has dimension $\dim \Gamma = \dim \Sigma ' \geq 1$.

\textbf{Step 4 - $\mathcal{E}(F)$ has positive Hausdorff dimension}   By the definition of $\Gamma$ (recall equation \eqref{Equation for gamma}) we have
\begin{equation*}
\Gamma = \bigcup_{I \in \lbrace 1,...,l \rbrace^*} \lbrace \gamma \in \Gamma: \phi_I (F) \subseteq \gamma (F)   \rbrace.
\end{equation*}
Since $\dim \Gamma \geq 1$, it follows that for some cylinder $\phi_I$ we have
\begin{equation*}
\dim \lbrace \gamma \in \Gamma: \phi_I (F) \subseteq \gamma (F)   \rbrace = \dim \lbrace \gamma \in \Gamma: \gamma^{-1} \circ \phi_I (F) \subseteq F   \rbrace  > \frac{1}{2}.
\end{equation*}
Denote $\Lambda = \lbrace \gamma \in \Gamma: \gamma^{-1} \circ \phi_I (F) \subseteq F   \rbrace$. Then since inversion is a smooth operation in $G_2$  we see that $\dim \Lambda^{-1} = \dim \Lambda$. Since composition with $\phi_I$ is a diffeomorphism of $G_2$, we see that 
\begin{equation*}
\dim \Lambda^{-1} \circ \phi_I = \dim \Lambda^{-1} = \dim \Lambda > \frac{1}{2},
\end{equation*}
where $\Lambda^{-1} \circ \phi_I  = \lbrace g^{-1} \circ \phi_I : g\in \Lambda \rbrace$. Now, by definition $\mathcal{E}(F) \supset \Lambda^{-1} \circ \phi_I$. That is, every element in $\Lambda^{-1} \circ \phi_I$ is a self embedding of $F$. Therefore,
\begin{equation*}
\dim \mathcal{E}(F) \geq  \dim \Lambda^{-1} \circ \phi_I   > \frac{1}{2}.
\end{equation*} 

However, $\Phi$ has the SSC, so $F$ admits a self similar measure of maximal dimension that does not admit a $1$-slicing, by Theorem \ref{Theorem saturation for self similar measures}.  So, by Theorem \ref{Theorem zero dimension of self embeddings}  we must have $\dim \mathcal{E}(F)=0$. This is our desired contradiction.   \hfill{$\Box$}

\section{Proof of Theorem \ref{Theorem one into one}} \label{Section proof of affine embeddings} \label{Proof of affine embeddings}
In this section we prove our results concerning situations when one self similar set $F \subset [0,1]^2$ embeds into another self similar set $E \subset [0,1]^2$. The end game in all our arguments here is to employ the inverse Theorem \ref{Theorem Inverse Theorem main application}. To apply  this Theorem, we must prescribe a bounded set $X\subset G_2$ where we allow such embeddings to live. Note that this set may only depend (in our setting) on $F$. This is where the assumptions that both $F$ and $E$ are in $[0,1]^2$, and that $F$ does not sit on any affine line, come into play. Thus, since $\mathcal{E}(F,E) \subseteq \mathcal{E}(F,[0,1]^2)$ and $F\subset [0,1]^2$ does not sit on an affine line, it is not hard to see that $\mathcal{E}(F,[0,1]^2)$ is a bounded subset of $G_2$ (see \eqref{Equation E(F,E)} for the definition of $\mathcal{E}(F,E)$). So, this will always be our $X$ when we use Theorem \ref{Theorem Inverse Theorem main application}.

We begin  by proving that for a nice self similar set $F$, $\dim \mathcal{E}(F,E)$ is continuous in $\dim E-\dim F$ at $0$.  
\begin{theorem} \label{Theorem contiouty of dimesnion of embeddings}
Let $F$ be a self similar set that supports a self similar measure $\mu$ of maximal dimension, and suppose $0<\dim F <2$.  Suppose that $\mu$ does not admit a $1$-slicing. Then for every $\epsilon>0$ there exists $\delta=\delta(\mu,\epsilon)>0$ such that:

 For every self similar set $E$, if $\dim E - \dim F <\delta$ then $\dim \mathcal{E}(F,E) \leq \epsilon$. 
\end{theorem}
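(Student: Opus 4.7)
The plan is a quantitative refinement of Theorem~\ref{Theorem zero dimension of self embeddings}: I repeat exactly the same argument, but track how the dimension gap produced by Hochman's inverse theorem translates into a gap between $\dim E$ and $\dim F$. The point that makes this uniform in $E$ is that Theorem~\ref{Theorem Inverse Theorem main application} depends on $\mu$ and a bounded ``home'' set $X \subseteq G_2$ for the convolving measures, and both can be chosen depending only on $F$.

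Concretely, first fix $X := \mathcal{E}(F,[0,1]^2) \subseteq G_2$. Since $F \subset [0,1]^2$ does not lie on an affine line and $[0,1]^2$ is bounded, $X$ is a bounded subset of $G_2$, and it contains $\mathcal{E}(F,E)$ for every self similar $E \subseteq [0,1]^2$. Given $\epsilon > 0$, apply Theorem~\ref{Theorem Inverse Theorem main application} with data $(\mu, X)$ to obtain $\epsilon_0 = \epsilon(\mu,X) > 0$. Set $\epsilon_1 := \min\{\epsilon,\epsilon_0\}$, and let $\delta_1 = \delta(\epsilon_1) > 0$ be the corresponding constant from the inverse theorem. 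Define $\delta := \delta_1$, which depends only on $\mu$ and $\epsilon$.

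Suppose toward a contradiction that some self similar $E$ satisfies $\dim E - \dim F < \delta$ while $\dim \mathcal{E}(F,E) > \epsilon$. Frostman's lemma produces a compactly supported $\nu \in P(\mathcal{E}(F,E)) \subseteq P(X)$ with $\overline{\dim}_e \nu \geq \underline{\dim}_H \nu > \epsilon \geq \epsilon_1$. Theorem~\ref{Theorem Inverse Theorem main application} then gives $\overline{\dim}_e(\nu.\mu) \geq \overline{\dim}_e \mu + \delta_1$. Every element of $\mathrm{supp}(\nu)$ sends $F$ into $E$ and $\mu$ is supported on $F$, so $\nu.\mu$ is supported on $E$; using $\overline{\dim}_e \mu \geq \underline{\dim}_H \mu = \dim F$ (since $\mu$ has maximal dimension), together with $\overline{\dim}_B E = \dim E$ (since $E$ is self similar) and equation \eqref{Equation dimension}, one gets $\dim E \geq \overline{\dim}_e(\nu.\mu) \geq \dim F + \delta_1 = \dim F + \delta$, contradicting the hypothesis.

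The argument is a direct copy of the proof of Theorem~\ref{Theorem zero dimension of self embeddings}, so no new ideas are needed. The only points that deserve care are verifying that $X$ is bounded and, more subtly, that the constant $\epsilon(\mu,X)$ produced by the inverse theorem depends only on $\mu$ and on the a priori bound for $X$ (and, in particular, not on $E$); both amount to unwinding the statement of Theorem~\ref{Theorem Inverse Theorem main application}. I do not anticipate any serious obstacle.
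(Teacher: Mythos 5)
Your proof is correct and follows essentially the same argument as the paper's: fixing the bounded set $X = \mathcal{E}(F,[0,1]^2)$ so that Theorem~\ref{Theorem Inverse Theorem main application} yields constants depending only on $\mu$ (hence on $F$), and then deriving a contradiction from Frostman's lemma, the inverse theorem, and the coincidence of Hausdorff and box dimension for the self-similar set $E$. The only cosmetic difference is that you carry the contradiction hypothesis $\dim\mathcal{E}(F,E)>\epsilon$ directly instead of via $\epsilon_1$, which is harmless since $\epsilon\geq\epsilon_1$.
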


\begin{proof}
Let $\epsilon>0$. Since $\mu$ does not admit a $1$-slicing, we can produce $\epsilon_0 =\epsilon(\mu)$ from Theorem \ref{Theorem Inverse Theorem main application}. Let $\epsilon_1 = \min \lbrace \epsilon,\epsilon_0 \rbrace$. Produce the $\delta(\epsilon_1)=\delta >0$ from Theorem \ref{Theorem Inverse Theorem main application} (using $X = \mathcal{E}(F,[0,1]^2)$).  Let $E$ be a self similar set, and suppose that $\dim E - \dim F < \delta$. We prove that $\dim \mathcal{E}(F,E) \leq \epsilon_1 \leq \epsilon$. 

Suppose towards a contradiction that $\dim \mathcal{E}(F,E) > \epsilon_1$. It follows that we can find a measure $\nu \in P(G_2)$ of (upper) entropy dimension $>\epsilon_1$ that is supported on $X$ (since $\mathcal{E}(F,E) \subseteq X$). Recall that $\mu$ is a self similar measure on $F$ with $\underline{\dim_H} \mu = \dim_H F$. By Theorem \ref{Theorem Inverse Theorem main application} we have $\overline{\dim_e} \nu.\mu \geq \underline{\dim_H} \mu + \delta = \dim F + \delta$. Note that $\nu.\mu$ is supported on $E$, so $\dim_B E \geq \overline{\dim_e} \nu.\mu$.  Finally, $E$  is a self similar set, so $\dim_H E = \dim_B E \geq \dim F + \delta$, a contradiction. We conclude that $\dim \mathcal{E}(F,E) \leq \epsilon$.
\end{proof}

$$ $$
\textbf{Proof of Theorem \ref{Theorem one into one} assuming condition (1)} Recall that we are assuming $F$ admits a self similar measure $\mu \in P(F)$ of maximal dimension, that does not admit a $1$-slicing.  Produce  $\delta(\mu, \frac{1}{2}) = \delta>0$ using Theorem \ref{Theorem contiouty of dimesnion of embeddings}. Let $E$ be a self similar set generated by an IFS $\Psi$ that has the SSC and has a uniform contraction $\lambda>0$, and such that $\dim E - \dim F < \delta$.   Let $\phi_i \in \Phi$ be any map, $\phi_i(z)=\alpha O(z)+t$, where $\alpha>0, O\in O(\mathbb{R}^2)$ and $t\in \mathbb{R}^2$.  We prove that if $\frac{\log \lambda}{\log \alpha}\notin \mathbb{Q}$, then $\mathcal{E}(F,E)=\emptyset$. Suppose towards a contradiction that this is not the case, and without the loss of generality suppose $i=1$. Let  $g(z)=Az+t$ be an affine map such that $g(F)\subseteq E$.

Let $\rho = \min_{i\neq j} d( \psi_i (E),\psi_j (E))>0$.  Let $k\in \mathbb{N}$ be such that $\alpha^k \leq \frac{\rho}{||A||\cdot \text{diam} (F)}$. Thus, by Lemma \ref{Lemma Restrict map rescale}, it follows that for all large $n\in \mathbb{N}$, $g(\phi_{1^n} (F))$ is contained within a unique cylinder $\psi_{I_n} (F)$ of generation $[(n-k) \frac{\log \alpha}{\log \lambda}]+1$, where $[\cdot ]$ stands for integer (floor) value. Then the map $\psi_{I_n} ^{-1} \circ g \circ \phi_{1^n}$ defines an  embedding of $F$ into $E$ by an affine map. Furthermore, the linear part of this affine map has norm $\lambda^{-1} \cdot \alpha^{k} \cdot \lambda^{\lbrace (n-k)\cdot\frac{\log \alpha}{\log \lambda}\rbrace}\cdot ||A||$, where $\lbrace x \rbrace = x -[x]$. Let $\Sigma'$ denote the closure of  the sequence of all affine embeddings $\psi_{I_n} ^{-1} \circ g \circ \phi_{1^n}$ obtained this way. Let $\Sigma$ denote the set of linear parts of the maps in $\Sigma'$.

The map taking $\sigma \in \Sigma$ to $||\sigma||$ is a Lipschitz continuous map from $\Sigma$ to $\mathbb{R}$. Since $\frac{\log \alpha}{\log \lambda} \notin \mathbb{Q}$, it follows that the image of $\Sigma$ under this map is the non degenerate interval 
\begin{equation*}
[ \alpha^{k} \cdot  ||A||, \lambda^{-1} \cdot \alpha^k \cdot ||A||].
\end{equation*}
 Thus, $\dim \Sigma \geq 1$.  Therefore, $\dim \mathcal{E}(F,E)\geq \dim \Sigma' \geq \dim \Sigma \geq 1$. This contradicts the fact that Theorem \ref{Theorem contiouty of dimesnion of embeddings} guarantees that $\dim \mathcal{E}(F,E)\leq \frac{1}{2}<1$, by our choice of $\delta$. \hfill{$\Box$}

$$ $$

\textbf{Proof of Theorem \ref{Theorem one into one} assuming condition (2)}  Recall that we are assuming $F$ is generated by an IFS $\Phi$ with the OSC,  such that $|G_\Phi|=\infty$. Thus, we may assume $\Phi$ contains a similarity with orthogonal part an irrational rotation (see Section \ref{Section self similar sets}). Let $\mu$ be a self similar measure on $F$ of dimension $\dim F$. Then $\mu$ does not admit a $1$-slicing according to Proposition \ref{Theorem saturation for self similar measures}. We may thus produce  $\delta=\delta(\mu,\frac{1}{2})>0$ as in Theorem \ref{Theorem contiouty of dimesnion of embeddings}.

Let $E$ be generated by an IFS $\Psi$ that has the SSC, $|G_\Psi|<\infty$, and $\Psi$ has a uniform contraction $\lambda$. Suppose $\dim E - \dim F < \delta$. We prove that $\mathcal{E}(F,E)=\emptyset$. Let $g(z)=Az+t'$, where $A\in GL(\mathbb{R}^2)$ and $t'\in \mathbb{R}^2$, and suppose towards a contradiction that $g(F)\subseteq E$.

 Write $\phi(z)=\alpha O(z)+t$ for the map $\phi \in \Phi$ such that $O\in SO(\mathbb{R}^2)$ is an irrational rotation, and as usual $\alpha>0,t\in \mathbb{R}^2$ . Since $F$ supports a self similar measure of maximal dimension that does not admit a $1$-slicing , by the previous proof  we may assume that $\alpha^m = \lambda^n$ for some $m,n\in \mathbb{N}$. We may thus assume without the loss of generality that $\alpha = \lambda$ (by taking the iterated IFS's $\Phi^m$ and $\Psi^n$ instead of $\Phi$ and $\Psi$. For the definition of the iterated IFS $\Phi^n$ see e.g. Section \ref{Section self similar sets} or \cite{feng2009structures}).

Let $\rho = \min_{i\neq j} d( \psi_i (E),\psi_j (E))>0$.  Let $k\in \mathbb{N}$ be such that $\lambda^k \leq \frac{\rho}{||A||\cdot \text{diam} (F)}$. Let $n\in \mathbb{N}$ be such that $n>k$, then by Lemma \ref{Lemma Restrict map rescale} it follows that there is a unique $n-k+1$-th generation cylinder $\psi_{I_n}$ such that $g(\phi_{1^n} (F)) \subseteq \psi_{I_n} (E)$. Thus, $\psi_{I_n} ^{-1} \circ g \circ \phi_{1^n} $ defines an affine embedding of $F$ into $E$ for every $n>k$. The norm of each such embedding is $\lambda^{k-1} \cdot ||A||$. The linear part of each such embedding has the form
\begin{equation*}
\lambda^{k-1} \cdot U_n ^{-1} \cdot A \cdot O^n , \quad U_n \in G_\Psi .
\end{equation*}

Let $\Sigma'$ be the closure of the collection $\lbrace \psi_{I_n} ^{-1} \circ g \circ \phi_{1^n} \rbrace $ of embeddings of $F$ into $E$, and let $\Sigma$ denote the set of linear parts of $\Sigma'$. Then, as $O$ is an irrational rotation, for every $R\in SO(\mathbb{R}^2)$ there is a map $\sigma$ in $\Sigma'$ such that its linear part is $\lambda^{k-1} UAR$, for some $U \in G_\Psi$. Therefore, we may write
\begin{equation*}
SO(\mathbb{R}^2) = \bigcup_{U\in G_\Psi} \lbrace R\in SO(\mathbb{R}^2): \lambda^{k-1} UAR \text{ is the linear part of some } \sigma \in \Sigma' \rbrace. 
\end{equation*}
So, since $A$ is fixed and $|G_\Psi|<\infty$, it follows that there is a closed subset $\Gamma$ of dimension $1$ (the dimension of $SO(\mathbb{R}^2)$) in $SO(\mathbb{R}^2)$, such that: for a fixed $U\in G_\Psi$, for every $R\in \Gamma$, $\lambda^{k-1} UAR$ is the linear part of some map in $\Sigma'$ (so $\lambda^{k-1} UAR\in \Sigma$). Define $T_U : \Gamma \rightarrow GL(\mathbb{R}^2)$ by $T_U( R) = \lambda^{k-1} UAR$. Then $T_U (\Gamma) \subseteq \Sigma$, and since $T_U$ is a bi-Lipschitz map between $\Gamma$ and $T_U (\Gamma)$, we find that $\dim \Sigma \geq \dim T_U (\Gamma) \geq 1$. It follows that $\dim \Sigma' \geq \dim \Sigma \geq 1$. However, by our choice of $\delta$ we have
\begin{equation*}
1 \leq \dim \Sigma \leq \dim \mathcal{E} (F,E) \leq \frac{1}{2}<1
\end{equation*}
a contradiction. We conclude that $\mathcal{E}(F,E)=\emptyset$. \hfill{$\Box$}
$$ $$

The proof of Theorem \ref{Theorem one into one} assuming condition (3) follows along similar lines with minor modifications, so we just sketch the proof: produce $\delta$ using Theorem \ref{Theorem contiouty of dimesnion of embeddings}, and suppose  that there exists $g\in \mathcal{E}(F,E)$. Retaining the same notations as above, since $G_\Phi$ is  finite, we can assume without the loss of generality that $\phi_1$ is a homothety. We may thus produce an affine embedding $\psi_{I_n} ^{-1} \circ g \circ \phi_{1^n}$ of $F$ into $E$ as done above. Its linear part now has the form $\lambda^{k-1} U^{-n+k-1} \cdot A$, where $U$ is the  irrational rotation that is the orthogonal part of all the similarities in $G_\Psi$. Since $U$ is an irrational rotation this implies that $\dim \mathcal{E}(F,E) \geq 1$, which is impossible if $\dim E - \dim F$ is small enough. 

\section{Proof of part (2) of Theorem \ref{Theorem embeddings}}

Recall that now we are assuming that $F$ is generated by an IFS $\Phi$ with the SSC, a uniform contraction and such that $G_\Phi$ is finite. We begin the proof of part (2) of Theorem \ref{Theorem embeddings} by showing  that if $g(z)=A(z)+t,A\in GL(\mathbb{R}^2),t\in \mathbb{R}^2$ and $g(F)\subseteq F$ then $A$ is diagonalizable over $\mathbb{C}$.

\subsection{Proof that the linear part is diagonalizable} \label{Section part 1}
\textbf{Overview} We make use (again) of the "restrict - map - rescale" heuristic. We do this by considering the  iterated map $g^n$, and rescaling to get a new embedding using a cylinder of $F$ containing $g^n (F)$ (of generation approximately $\log ||g^n||$). Assuming $A$ is not diagonalizable,  its powers approach a rank 1 matrix after being normalized. Considering all these (re-scaled) embeddings, we find that there is a projection $P$ to a line such that a large set of scalings of $P(F)$ embed into $F$, and so a large set of scalings  of $P(F)$ embedd into $P(F)$ (the fact that we get a large set scalings uses the non-diagonalizable assumption). An application of the one dimensional inverse Theorem \ref{Theorem 4.2} shows this is impossible unless the projection $P(F)$ has dimension one. Since the projection embeds into a slice of F and all slices have dimension smaller than one (because of strong separation by Corollary \ref{Corollary SSC implies dimension gap for slices}), we obtain a contradiction. 
$$ $$
\textbf{Proof} Suppose towards a contradiction that $A$ is not diagonalizable (over $\mathbb{C}$).

\textbf{Step 1 - The Jordan decomposition of $A$} We first claim that  the Jordan decomposition of $A$ satisfies  $A=UJU^{-1}$ where $U \in GL(\mathbb{R}^2)$ and 
\begin{equation*}
J =  \begin{pmatrix}
\gamma & 1 \\
0 & \gamma \\
\end{pmatrix} , \quad \gamma\in \mathbb{R},  |\gamma| <1.
\end{equation*}
The assertion about $\gamma \in \mathbb{R}$ and $U\in GL(\mathbb{R}^2)$ follows since $A$ may have only real eigenvalues (if it has a complex eigenvalue then its complex conjugate is also an eigenvalue so $A$ is diagonalizable and we are done). We also claim that $|\gamma|<1$. First,  note that for every $n\in \mathbb{N}$ we have
\begin{equation*}
A^n = U J^n U^{-1} = U \begin{pmatrix}
\gamma^n & n\gamma^{n-1} \\
0 & \gamma^n \\
\end{pmatrix} U^{-1}.
\end{equation*}
Also, since $g$ is affine
\begin{equation*}
g^n ( \widetilde{F} ) \subseteq \widetilde{F}
\end{equation*}
where $\widetilde{F}$ is the closure of the convex hull of $F$. Since $F$ is a compact set that does not sit on an affine line, we see that $\widetilde{F}$ has positive and finite  volume. So,  we have
\begin{equation*}
|\gamma|^{2n} \cdot \text {vol} (\widetilde{F}) = \text{vol} (g^n (\widetilde{F})) \leq \text{ vol} (\widetilde{F}).
\end{equation*}
Thus, $|\gamma| \leq 1$. To see why we cannot have $|\gamma|=1$, let $w,z\in F$ be such that  
\begin{equation*}
P_2 (U^{-1} (w)) \neq P_2 (U^{-1} (z))
\end{equation*}
(recall that $F$ does not sit on any affine line and that $P_2(x,y)=y$).  Thus,  if $|\gamma| = 1$ then for any $n\in \mathbb{N}$,
\begin{equation*}
\text{diam} (F) \geq || g^n (w) - g^n (z)|| = || A^n (w) - A^n (z)||    
\end{equation*}
\begin{equation*}
 \geq || n \cdot  \left( P_2 (U^{-1} (w))  - P_2 (U^{-1} (z)) \right)+ D||\cdot ||U^{-1}||^{-1},
\end{equation*}
 where  $D = P_1 (U^{-1} (w)) - P_1 (U^{-1} (z))$,  a contradiction.

 Since $|\gamma|<1$ we have
\begin{equation*}
||J^n|| \leq 2\cdot ||J^n||_{\max} = 2\cdot n|\gamma|^{n-1},
\end{equation*} 
 where $||\cdot||_{\max}$ of a matrix is just the maximum of the absolute values of its entries. Since  $||A^n|| \leq ||U||\cdot ||J^n|| \cdot ||U^{-1}||$, we have $||A^n|| \leq ||U||\cdot ||U^{-1}||\cdot (2n|\gamma|^{n-1})$, using the operator norm on matrices as usual.

\textbf{Step 2 - generating more embeddings} We may assume without the loss of generality that $\gamma>0$ (otherwise we work with the self embedding $g^2$. Note that if the linear part of $g$ is not $\mathbb{C}$-diagonalizable, then neither is the linear part of $g^2$). Recall that we are assuming that $\Phi$ satisfies the SSC and has uniform contraction ratio $\lambda$. Let $\rho = \min_{i\neq j} (\phi_i (F),\phi_j (F))>0$, so that the distance between $n$-generation cylinders is $\rho\cdot \lambda^{n-1}$ (see Lemma \ref{Lemma Restrict map rescale}). Find $k\in \mathbb{N}$ such that 
\begin{equation*}
\gamma^{k} < \frac{\rho}{2||U||\cdot ||U||^{-1}\cdot \text{diam}(F)}.
\end{equation*}
Since the linear part of $g^n$ is $A^n$ for every $n\in \mathbb{N}$ such that $n>k$, we have
\begin{equation*}
\text{diam}(g^{n}(F)) \leq \text{diam}(F)\cdot ||A^{n}|| \leq ||U||\cdot ||U^{-1}||\cdot (2\cdot n\gamma^{n-1})\cdot \text{diam} (F) \leq \rho \cdot  n\cdot \gamma^{n-k-1} 
\end{equation*}
\begin{equation*}
 = \rho\cdot \lambda^{\frac{\log n\gamma^{n-k-1}}{\log \lambda}} \leq \rho\cdot \lambda^{[\frac{\log n\gamma^{n-k-1}}{\log \lambda}]},
\end{equation*}
It follows (by Lemma \ref{Lemma Restrict map rescale}) that $g^{n} (F)$ intersect a unique $[\frac{\log n\gamma^{n-k-1}}{\log \lambda}]+1$ generation cylinder $\phi_{I_n} (F)$, and is therefore contained in it.

Consider the affine self embedding $\phi_{I_n} ^{-1} \circ g^{n}$ of $F$.   Its linear part has the form, for $O_n \in G_\Phi$ that is the orthogonal part of $\phi_{I_n}$, and since $\Phi$ has a uniform contraction ratio,
\begin{equation*}
\lambda^{-1} \cdot \lambda^{-[\frac{\log n\gamma^{n-k-1}}{\log \lambda}]} \cdot O_n ^{-1} \cdot A^{n} = \lambda^{-1} \cdot O_n ^{-1} \cdot U \cdot \lambda^{-[\frac{\log n\gamma^{n-k-1}}{\log \lambda}]}\cdot J^{n} \cdot U^{-1}.
\end{equation*}
Note that:
\begin{itemize}
\item  The diagonal entries of $\lambda^{-[\frac{\log n\gamma^{n-k-1}}{\log \lambda}]}\cdot J^{n}$ are both $\gamma^{n} \cdot \lambda^{-[\frac{\log n\gamma^{n-k-1}}{\log \lambda}]}=O(\frac{1}{n})$.

\item The lower off diagonal entry of $\lambda^{-[\frac{\log n\gamma^{n-k-1}}{\log \lambda}]}\cdot J^{n}$ is $0$.

\item  The upper off diagonal entry of $\lambda^{-[\frac{\log n\gamma^{n-k-1}}{\log \lambda}]}\cdot J^{n}$ is 
\begin{equation} \label{equation off diagonal}
n\gamma^{n-1} \cdot \lambda^{-[\frac{\log n\gamma^{n-k-1}}{\log \lambda}]} =    \lambda^{\frac{\log n\gamma^{n-1}}{\log \lambda} -[\frac{\log n\gamma^{n-k-1}}{\log \lambda}]} = \lambda^{\frac{\log \gamma^{k}}{\log \lambda}+ \frac{\log n\gamma^{n-k-1}}{\log \lambda} -[\frac{\log n\gamma^{n-k-1}}{\log \lambda}]} 
\end{equation}
\begin{equation*}
= \lambda^{ \frac{\log \gamma^{k}}{\log \lambda} + \lbrace \frac{\log n\gamma^{n-k-1}}{\log \lambda}\rbrace}.
\end{equation*}
\end{itemize}

As a result,  the affine self embedding $\phi_{I_n} ^{-1} \circ g^n$ of $F$ has uniformly bounded norm, so  its translation part lies in a uniformly bounded neighbourhood of $F$. So, $\phi_{I_n} ^{-1} \circ g^n$ has converging sub-sequences. Moreover, every such converging subsequence  converges to an affine  map with linear part
\begin{equation*}
\lambda^{-1} \cdot O^{-1} \cdot U \begin{pmatrix}
0 &  \lambda^{ \frac{\log \gamma^{k}}{\log \lambda}}\cdot a \\
0 & 0\\
\end{pmatrix}  \cdot  U^{-1} = O^{-1} \cdot U \begin{pmatrix}
0 &  \lambda^{ \frac{\log \gamma^{k}}{\log \lambda}-1}\cdot a \\
0 & 0\\
\end{pmatrix}  \cdot  U^{-1} ,
\end{equation*}
where $a$ is an accumulation point of the sequence $ \lambda^{\lbrace \frac{\log n\gamma^{n-k-1}}{\log \lambda}\rbrace}$ and $O\in G_\Phi$ is the linear part of some cylinder map (note that here we use the fact that $|G_\Phi| < \infty$). We  claim that the set of such $a$'s we may obtain is in fact a connected interval of non zero length. This follows by verifying that the sequence $a_n = \lbrace \frac{\log n\gamma^{n-k-1}}{\log \lambda}\rbrace$ is dense in the circle
. Moreover, note that every affine map that arises as a limit of a subseqeunce of $\phi_{I_n} ^{-1} \circ g^n$ is a self embedding of $F$, since it is a limit of self embeddings of $F$.

\textbf{Step 3 - many scalings of a projection of $F$ embed into $F$} By the previous step (its last part),   every accumulation point of $\lbrace \phi_{I_n} ^{-1} \circ g^n \rbrace_{n\in \mathbb{N}}$ is a self embedding of $F$, and this set of limit maps has the following property. Fix the linear projection $L:\mathbb{R}^2 \rightarrow \mathbb{R}^2$ such that $L(z) = U \begin{pmatrix}
0 & 1 \\
0 & 0
\end{pmatrix} U^{-1} (z)$ . Then for every  $a$ in  a non degenerate interval $(b,c)$ there exists some $t_a\in \mathbb{R}^2$ and $O\in G_\Phi$ that is the orthogonal part of a cylinder map, such that 
\begin{equation*}
a \cdot O^{-1} \cdot L(\cdot ) +t_a
\end{equation*}
is a self embedding of $F$, that is,
\begin{equation*}
a\cdot O^{-1} \cdot  L (F)+t_a \subseteq F.
\end{equation*} 
This implies that 
\begin{equation} \label{Equation L(F)}
a\cdot  L (F)+O(t_a) \subseteq O (F).
\end{equation}
Recall that, by the construction in the previous step, $O$ is the orthogonal part of some cylinder map of $\Phi$.  Thus we can write, for some $n\in \mathbb{N}$ 
\begin{equation*}
O (F) = \lambda^{-n} \cdot \phi_I (F)-t' \quad \text{ for }  t' \in \mathbb{R}^2, I \in \lbrace 1,...,l\rbrace^n,
\end{equation*}
so
\begin{equation*}
O(F)=\lambda^{-n} \cdot \phi_I (F)-t' \subset \lambda^{-n} \cdot F-t'
\end{equation*}
Combining this with  equation \eqref{Equation L(F)} we have
\begin{equation*}
a\cdot \lambda^n \cdot L(F) +t_a ' \subseteq F,
\end{equation*}
for some translate $t_a ' \in \mathbb{R}^2$. Note that $\dim \ker (L)=1$. Also, note that since $|G_\Phi|<\infty$, we may assume that there are finitely many possible values of $n$ appearing in the above equation (because there is some $m$ such that every possible orthogonal part of a cylinder appears as an orthogonal part of a cylinder in $\lbrace \phi_I : 1 \leq |I| \leq m \rbrace$, since $|G_\Phi| < \infty$).

\textbf{Step 4 - an inverse Theorem for graph directed sets}  By Step 3, for every $a\in (b,c)$ there is some $1\leq n \leq m$ (where $m$ is global) and $t_a\in \mathbb{R}^2$ such that
\begin{equation*}
a\cdot \lambda^n \cdot L(F) +t_a  \subseteq F.
\end{equation*}
So,  $a\cdot \lambda^n \cdot L(F) +t_a $ is contained within the slice $F\cap ( \image (L) +t_a)$. Since  $F$ is the attractor of an IFS with the SSC, it follows by Corollary \ref{Corollary SSC implies dimension gap for slices} that 
\begin{equation*}
\dim L (F) \leq  \dim F\cap (\image (L)+t_a) <1.
\end{equation*}

We also claim that $\dim L (F) >0$. We first recall that $L(F)$ has finite Hausdorff measure in its dimension, by Theorem \ref{Theorem farkas}.  Now, if $\dim L(F)=0$ then $L(F)$ is finite, as it has positive and finite $0$-$\dim$ Hausdorff measure. Bearing in mind that $F$ is not supported on any affine line, and that this implies that $F$ is supported on a translate of $\ker (L)$, one readily sees that this is impossible.  

Next, recall that for every $a\in (b,c)$  there is a translation $t_a$ and some $1\leq n \leq m$ such that
\begin{equation*}
a\cdot \lambda^n \cdot  L (F)+t_a \subseteq F,
\end{equation*} 
which implies
\begin{equation*}
a\cdot \lambda^n \cdot  L (F)+L(t_a) \subseteq L(F),
\end{equation*} 
Noting that $L^2 (F)=L\circ L (F)$ equals $L(F)$. It follows that there is a set $\Sigma$ of affine self embeddings of $L (F)$ such that $\Sigma$ has dimension at least $1$.

Now, produce the $\delta=\delta(\dim L(F))>0$ guaranteed by Theorem \ref{Theorem 4.2}. Since $|G_\Phi|<\infty$,  by Theorem \ref{Theorem farkas}\footnote{
Note that, while formally $L:\mathbb{R}^2 \rightarrow \mathbb{R}^2$, we can clearly identify it with a map $L:\mathbb{R}^2 \rightarrow \mathbb{R}$, since $\dim \image (L)=1$. Thus, we identify $L(F)$ with the corresponding set in $\mathbb{R}$.} it follows that  we can find a self similar set $K\subset L (\phi_1 (F))$ that has a generating IFS with the SSC, such that $\dim L (\phi_1 (F)) -  \dim K <\delta$. Then for every $\sigma \in \Sigma$, $\sigma (K) \subseteq L (F)$. Let $\mu$ be a self similar measure of maximal dimension on $K$. Let $\nu \in P(G_1)$ be a measure supported on $\Sigma$ of (entropy) dimension $\geq 1$ \footnote{To find such a measure, we note $\Sigma \subset G_1$ can be identified with a compact subset of $\mathbb{R}^2$ such that $P_1 (\Sigma)$ contains an interval. Find a measure $\mu$ on $\Sigma$ such that $P_1 \mu$ is the Lebesgue measure on this interval. Then the entropy dimension of this measure is at least $1$.}. Then we know that the convolution measure $\nu.\mu$ (defined in the discussion before Theorem \ref{Theorem 4.2}) is supported on $L (F)$,  $\dim_B L (F) = \dim_H L (F)$ (by Theorem \ref{Theorem farkas}), and by Theorem \ref{Theorem 4.2} and \eqref{Equation dimension}, 
\begin{equation*}
\dim_B L (F) \geq \overline{\dim_e} \nu.\mu > \dim \mu +\delta = \dim K +\delta > \dim_H L (\phi_1 (F)) = \dim_H L (F).
\end{equation*}
 This is a contradiction. We conclude that $A$ must be diagonalizable.

\subsection{The largest eigenvalue is a rational power of lambda} \label{SEction largest eigenvalue}

Let $g(x)=Ax +t$, and recall that we are assuming $g(F)\subseteq F$. Denote the eigenvalues of $A$ as $\gamma_1, \gamma_2$. Note that either both $\gamma_1, \gamma_2 \in \mathbb{R}$ or both $\gamma_1, \gamma_2 \in \mathbb{C} - \mathbb{R}$, and in the latter case $|\gamma_1|=|\gamma_2|$ since they are complex-conjugates of each other. Thus, we have three cases to treat:
\begin{enumerate}
\item Suppose $|\gamma_1| = |\gamma_2|$ and $\gamma_1,\gamma_2 \in \mathbb{R}$. Then we must have $\gamma_2 = \pm \gamma_1$. Since $A$ is  diagonalizable (over $\mathbb{R}$ in this case), we see that $A^2$ is also diagonalizable with a unique real eigenvalue $\gamma_1 ^2$. Thus, $A^2 = \gamma_1 ^2 \cdot \id$. It follows that $A^2$ is a similarity matrix, so $g^2$ is a similarity map. By the results in \cite{elekes2010self} for similarity maps, $\gamma_1 ^2$  is a rational power of $\lambda$. Since $|\gamma_1| = |\gamma_2|$, the result follows.

\item Suppose $|\gamma_1| = |\gamma_2|$ and $\gamma_1,\gamma_2 \in \mathbb{C}-\mathbb{R}$. Write $\gamma_1 = a-bi, b\neq 0, a \in \mathbb{R}$. Then it is known that $A$ is similar to a similarity-rotation (i.e. a dilated rotation) matrix, namely
\begin{equation*}
A = T \cdot \begin{pmatrix}
a& -b \\
b & a \\
\end{pmatrix} \cdot T^{-1}, \quad T\in GL(\mathbb{R}^2).
\end{equation*}
Then for some  $R \in SO(\mathbb{R}^2)$ we can write
\begin{equation} \label{Equation similarity of A}
A = r\cdot T\cdot R \cdot T^{-1}, 
\end{equation}
for $r=|\gamma_1|=|\gamma_2|=\sqrt{a^2+b^2}>0$. Note that, as in the proof of step 1 in Section \ref{Section part 1}, we must have $r\leq 1$ since $g$ preserves the closure of the convex hull of $F$. If $r=1$ then $r=\lambda^0$, so we may assume $r<1$.

From here, we follow a similar argument as in Section \ref{Section part 1}, only we aim at using the two dimensional inverse Theorem \ref{Theorem Inverse Theorem main application} (or rather its consequence Theorem \ref{Theorem zero dimension of self embeddings}). By equation  \eqref{Equation similarity of A} we see that $||A||$ is dominated by $r\cdot ||T||\cdot ||T^{-1}||$. Next, as in step 2 of Section \ref{Section part 1}, for every $n\in \mathbb{N}$ we rescale the map $g^n$ using a cylinder $\phi_{I_n}$ of generation $[\frac{n-k \log r}{\log \lambda}]+1$, where $k\in \mathbb{N}$ is such that $r^k < \frac{\rho}{||T||\cdot ||T^{-1}||}$, and $\rho$ is the distance between first generation cylinders of $F$.  The resulting self embedding of $F$, $\phi_{I_n} ^{-1} \circ g^n$, has linear part, for some $O_n \in G_\Phi$,
\begin{equation} \label{equation limits of maps}
r^n \lambda^{-[\frac{n-k \log r}{\log \lambda}]-1} \cdot O_n ^{-1} T R^n T^{-1}= \lambda^{-1} \cdot  r^k \lambda^{ \lbrace \frac{n-k \log r}{\log \lambda} \rbrace} \cdot O_n ^{-1} T R^n T^{-1}
\end{equation}

Now, suppose that $\frac{\log r}{\log \lambda}\notin \mathbb{Q}$. Then, by taking limits of the maps $\phi_{I_n}^{-1} \circ g^n$ (which are all self embeddings of $F$),  and following arguments similar to the ones in step 3 of Section \ref{Section part 1} (based this time on \eqref{equation limits of maps}), we see that there exists a non-degenerate interval $(b,c)$ (where $b>0$)  such that for every $a\in (b,c)$ there is some rotation $R_a \in SO(\mathbb{R}^2)$, an orthogonal matrix $O_a \in G_\Phi$, and a translation $t_a \in \mathbb{R}^2$ such that 
\begin{equation} \label{self embedding}
a\cdot \lambda^{-1} \cdot r^k \cdot O_a \cdot T\cdot R_a \cdot T^{-1} (F) + t_a \subseteq F.
\end{equation}

Finally, we claim that this implies that $\dim \mathcal{E}(F)>0$ (where $\mathcal{E}(F)$ is the set of affine self embeddings of $F$). Indeed, for every $a\in (b,c)$ we may find  a self embedding of $F$ as in equation \eqref{self embedding}. Note that 
\begin{equation*}
\det (a\cdot \lambda^{-1} \cdot r^k \cdot O_a \cdot T\cdot R_a \cdot T^{-1}) = a^2 \cdot  \lambda^{-2} \cdot  r^{2k}  \cdot \det (O_a \cdot T\cdot R_a \cdot T^{-1}) = a^2 \cdot  \lambda^{-2}  \cdot r^{2k} .
\end{equation*}
Denote the set of linear parts of the maps in $\mathcal{E}(F)$ by $L\mathcal{E}(F)$.  It follows that the image of the set $L\mathcal{E}(F)$ under the map $\det:GL(\mathbb{R}^2)\rightarrow \mathbb{R}$ contains the non-degenerate interval $r^{2k}\cdot \lambda^{-2} \cdot (b^2,c^2)$. Since $\det$ is continuously differentiable (a polynomial in the entries of the matrix) it is locally Lipschitz, therefore
\begin{equation*}
\dim \mathcal{E} (F) \geq \dim L\mathcal{E}(F) \geq \dim \det (L\mathcal{E}(F)) \geq \dim \left( r^{2k}\cdot \lambda^{-2} \cdot (b^2,c^2) \right) = 1.
\end{equation*}
This contradicts Theorem \ref{Theorem zero dimension of self embeddings}, recalling that the natural self similar measure on $F$ does not admit a $1$-slicing since $\Phi$ has the SSC, by Theorem \ref{Theorem saturation for self similar measures}.

\item We remain with the case $|\gamma_1| > |\gamma_2|$, so in particular $\gamma_1, \gamma_2 \in \mathbb{R}$. By following a complete analogue of the argument given in Section \ref{Section part 1}, we may deduce that $|\gamma_1|$ must be a rational power of $\lambda$. Specifically, as in Step 1, we consider the Jordan decomposition of $A$ (and now we know that the Jordan form of $A$ is diagonal). Since $|\gamma_1 | > |\gamma_2|$, the operator norm of $A^n$ is dominated by $|\gamma_1|^n$. Next, as in step 2, we rescale the map $g^n$ using a  cylinder of generation $||g^n|| \approx [ n \frac{\log |\gamma_1|}{\log \lambda}]$. We thus obtain a sequence of self embeddings of $F$ that has linear parts that converge (upon moving to a subsequence)  to a rank $1$ matrix (regardless of the algebraic relation between $\gamma_1$ and $\lambda$).

 If $\frac{\log |\gamma_1|}{\log \lambda} \notin \mathbb{Q}$ we will thus obtain a large set (of dimension at least $1$) of affine embeddings of the projection $L(F)$ into $F$, where
\begin{equation*} L(z) = 
U \begin{pmatrix}
1 & 0 \\
0 & 0
\end{pmatrix} U^{-1} (z), \text{ where } A = U \begin{pmatrix}
\gamma_1 & 0 \\
0 & \gamma_2
\end{pmatrix} U^{-1}, \text{ and } U\in GL(\mathbb{R}^2).
\end{equation*} 
Here we employ arguments as in Steps 2 and 3. We obtain a contradiction as in Step 4.

Therefore, we may find some $k, k' \in \mathbb{N}$ such that $\gamma_1 ^{k'} = \lambda^k$. Since $|\gamma_2|^{k'} < |\gamma_1|^{k'}$, one sees that (as in the first paragraph of this case)  for some $m\in \mathbb{N}$ and some $t\in \mathbb{R}^2$, the projection $L(F)$ (defined above) satisfies $\lambda^{m} \cdot L(F) +t  \subset F$ for some $t$ (this only relies on the fact that the eigenvalues of $A$ don't have equal norms).
\end{enumerate}
\subsection{The smallest eigenvalue is also a rational power of lambda} \label{Section part 3}

Before proving the remaining assertion of part (2)  of Theorem \ref{Theorem embeddings} we require some preliminaries about  the notion of weak separation  for self similar sets. We also study the structure  of slices of a self homothetic set such that the corresponding projection admits the weak separation condition.
\subsubsection{The weak separation condition}
Let $K \subseteq \mathbb{R}$ be a self similar set, generated by an IFS $\Psi = \lbrace \psi_i \rbrace_{i=1} ^m$. Let 
\begin{equation*}
\mathcal{T} = \lbrace \psi_I ^{-1} \circ \psi_J : I\neq J \in \lbrace 1,.., m \rbrace^* \rbrace.
\end{equation*}
equip the group of all similarities on $\mathbb{R}$ with the topology induced by pointwise
convergence. We say that $\Psi$ has the weak separation condition (WSC) if 
\begin{equation*}
\id \notin \overline{\mathcal{T} \smallsetminus \lbrace \id \rbrace}.
\end{equation*}

This condition has a very useful implication: For any $r>0, x\in K$ we define 
\begin{equation} \label{Equation number of maps}
\Psi (x,r) = \lbrace \psi_I: \quad I \in \Psi_r , \quad  \psi_I (K) \cap B(x,r) \neq \emptyset \rbrace,
\end{equation}
where $\Psi_r$ was defined in \eqref{equation for Ar}. Note that $\Psi(x,r)$ only takes into account different maps  in $\Psi$ (i.e. it ignores exact overlaps if there are any). The following Lemma, proved e.g. by  Fraser,  Henderson,  Olson, and Robinson in (\cite{Fraser2015Assouad}, part of the proof of Theorem 2.1) is key for our analysis :

\begin{Lemma} \cite{Fraser2015Assouad} \label{Lemma Fraser}
Let $K \subseteq \mathbb{R}$ be a self similar set generated by an $\Psi$ with the WSC. Then 
\begin{equation*}
\sup_{x\in K, r>0}  | \Psi (x,r) | < \infty
\end{equation*}
\end{Lemma}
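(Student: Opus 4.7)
The plan is to argue by contradiction. Suppose that $\sup_{x\in K,\, r>0}|\Psi(x,r)| = \infty$, and pick sequences $x_n \in K$, $r_n > 0$ with $N_n := |\Psi(x_n, r_n)| \to \infty$. Let $\beta_{\min} = \min_i \beta_i$ denote the smallest contraction ratio in $\Psi$, and introduce the rescaling $T_n(z) = (z - x_n)/r_n$. For every $\psi_I \in \Psi(x_n, r_n)$, the rescaled similarity $T_n \circ \psi_I$ has contraction ratio $\alpha_I/r_n \in (\beta_{\min}, 1]$ by the definition of $\Psi_{r_n}$ in \eqref{equation for Ar}; and since $\psi_I(K)\cap B(x_n, r_n) \neq \emptyset$ while $\mathrm{diam}(\psi_I(K)) \leq r_n \cdot \mathrm{diam}(K)$, the image $T_n\circ\psi_I(K)$ sits inside the fixed ball $B(0,\, 1 + \mathrm{diam}(K))$. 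In particular the translation parts of $T_n \circ \psi_I$ are uniformly bounded, so the whole family of rescaled maps lies in a fixed compact subset $\mathcal{C}$ of the similarity group of $\mathbb{R}$, once this group is parametrised as an open subset of $\mathbb{R}^2$.

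Next, I would apply a pigeonhole argument. Fix any $\epsilon > 0$. Since $\mathcal{C}$ is compact, it is covered by finitely many $\epsilon/2$-balls in any reasonable metric (for instance the one coming from the $(a,b)$ parametrisation, which is equivalent to pointwise convergence on $K$ once $K$ contains two distinct points). Once $N_n$ exceeds this number of balls, two of the rescaled maps $T_n \circ \psi_I$ and $T_n \circ \psi_J$ (with $I \neq J$ and $\psi_I \neq \psi_J$ as similarities, which is exactly the convention in the definition of $\Psi(x,r)$) must lie in the same $\epsilon/2$-ball, and hence agree on $K$ up to $\epsilon$.

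The crucial identity is then
\[
\psi_I^{-1}\circ \psi_J \;=\; (T_n \circ \psi_I)^{-1} \circ (T_n \circ \psi_J),
\]
where $(T_n \circ \psi_I)^{-1}$ is Lipschitz with constant at most $1/\beta_{\min}$. For every $x \in K$ this yields
\[
|\psi_I^{-1}\circ \psi_J(x) - x| \;\leq\; \frac{1}{\beta_{\min}}\,|T_n \circ \psi_J(x) - T_n \circ \psi_I(x)| \;\leq\; \frac{\epsilon}{\beta_{\min}},
\]
so $\psi_I^{-1}\circ \psi_J \in \mathcal{T}\setminus \{\mathrm{id}\}$ approaches $\mathrm{id}$ pointwise on $K$ as $\epsilon \to 0$. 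Assuming $K$ contains at least two points (otherwise the lemma is trivial), uniform closeness on $K$ forces the similarity parameters to converge to those of the identity, producing elements of $\mathcal{T}\setminus\{\mathrm{id}\}$ arbitrarily close to $\mathrm{id}$ and contradicting the WSC. The main obstacle I anticipate is in the precise verification of the pigeonhole step, namely ensuring that precompactness in the pointwise-convergence topology translates into genuine proximity of similarity parameters; this should reduce to a finite-cover argument once $K$ is known to contain two points whose separation is bounded below, which lets one recover the linear and translation parts of a similarity from two of its values.
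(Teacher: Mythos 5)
Your proposal is correct, and it gives a clean self-contained argument for a lemma that the paper does not prove itself but rather cites from Fraser–Henderson–Olson–Robinson. There is therefore no ``paper's own proof'' to compare against, but your compactness-plus-pigeonhole argument is exactly the standard one for this type of bounded-multiplicity statement under the WSC. The key mechanical points all check out: the contraction ratio of $T_n\circ\psi_I$ lies in $[\beta_{\min},1]$ by the definition of $\Psi_{r_n}$ in \eqref{equation for Ar}; the translation part is bounded because $T_n\circ\psi_I(K)\subseteq B(0,1+\mathrm{diam}(K))$; the finitely-many-balls cover of a fixed compact $\mathcal{C}$ forces two \emph{distinct} rescaled maps to be $\epsilon$-close once $N_n$ is large; the identity $\psi_I^{-1}\circ\psi_J=(T_n\circ\psi_I)^{-1}\circ(T_n\circ\psi_J)$ transfers this closeness to the quotient $\psi_I^{-1}\circ\psi_J$ with a uniform Lipschitz factor $1/\beta_{\min}$; and, since the elements of $\Psi(x_n,r_n)$ are distinct \emph{as maps} by the paper's convention, the resulting element of $\mathcal{T}$ is not $\mathrm{id}$. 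The only point you flag as an obstacle — translating uniform closeness on $K$ into closeness of similarity parameters — is indeed elementary: with two points $x_1\neq x_2\in K$ fixed once and for all, writing $\psi_I^{-1}\circ\psi_J(z)=az+b$ and bounding $|(a-1)x_i+b|\leq\epsilon'$ for $i=1,2$ recovers $|a-1|\leq 2\epsilon'/|x_1-x_2|$ and then $|b|=O(\epsilon')$, so the maps converge to $\mathrm{id}$ in the topology of pointwise convergence, contradicting the WSC. The one cosmetic remark is that the supremum in the statement should tacitly range over $0<r<\beta_1$ (the range for which $\Psi_r$, and hence $\Psi(x,r)$, is defined in \eqref{equation for Ar}), but that does not affect your argument.
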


\subsubsection{The weak separation condition for projections of self homothetic sets}
Let $F \subset \mathbb{R}^2$ be a self similar set generated by an IFS $\Phi = \lbrace \phi_i \rbrace_{i=1} ^m$ such that $G_\Phi = \lbrace \id \rbrace$ (i.e. $F$ is self-homothetic), and suppose $\Phi$ satisfies the strong separation condition. We shall also assume that $\Phi$ has a uniform contraction ratio $\lambda>0$. Recall that $P_1 :\mathbb{R}^2 \rightarrow \mathbb{R}, P_1(x,y)=x$, and define  the projected IFS
\begin{equation*}
P_1 \Phi = \lbrace x \mapsto \lambda \cdot x + t_1 : \quad  \exists 1 \leq i \leq m \text{ and } t_2 \text{ such that } \phi_i (x,y) = (\lambda\cdot x + t_1, \lambda \cdot y +t_2 ) \rbrace
\end{equation*}
which generates the real self similar set $P_1 (F)$.  For every $x\in P_1 (F)$, let $F^x$ denote the vertical slice through $F$ above $x$, i.e. 
\begin{equation*}
F^x = \lbrace (x,y) : (x,y)\in F \rbrace.
\end{equation*}

In this section we show that if the projected IFS $P_1 \Phi$ has the WSC, then $F^x$ can be approximated by  finite unions of (not necessarily disjoint)  sets, with a uniform bound on the number of the sets in these unions. For this purpose, denote $\Psi = \lbrace \psi_i \rbrace_{i=1} ^m = P_1 \Phi$ and $K=P_1 (F)$, and assume $\Psi$ has the WSC. Moreover, since $\Phi$ has a uniform contraction ratio $\lambda$, then so does $\Psi$. 

Let $x\in K$ and $n\in \mathbb{N}$ and define
\begin{equation*}
\Psi_n (x) = \lbrace \psi_I : \quad  |I|=n, \quad  x\in \psi_I (K) \rbrace.
\end{equation*}
Then it is clear that $\Psi_n (x) \subseteq \Psi(x,\lambda^n)$ (recall equation \eqref{Equation number of maps}). Therefore, by Lemma \ref{Lemma Fraser}, we have
\begin{Lemma} \label{Finite maps} $\sup_{x\in K, n\in \mathbb{N}}  | \Psi_n (x) | < \infty$
\end{Lemma}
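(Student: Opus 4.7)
The plan is simply to verify the inclusion $\Psi_n(x) \subseteq \Psi(x,\lambda^n)$ for every $x \in K$ and $n \in \mathbb{N}$, after which the uniform bound follows immediately from Lemma \ref{Lemma Fraser}. There is no substantive obstacle here; the content is already packaged into the WSC hypothesis via Fraser--Henderson--Olson--Robinson.

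First I would unpack the stopping-time family $\Psi_{\lambda^n}$ defined in \eqref{equation for Ar}. Because every map in $\Psi$ has contraction ratio exactly $\lambda$, the stopping condition $\lambda^{|I|} \leq \lambda^n < \lambda^{|I|-1}$ forces $|I| = n$, so that
\begin{equation*}
\Psi_{\lambda^n} \;=\; \{\psi_I : |I| = n\}.
\end{equation*}
This uses nothing beyond the uniform contraction ratio and is what makes the two indexing schemes (word length versus geometric scale) agree.

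Next, fix $x \in K$ and suppose $\psi_I \in \Psi_n(x)$. Then $|I|=n$, hence $\psi_I \in \Psi_{\lambda^n}$ by the previous step; moreover $x \in \psi_I(K)$ by definition of $\Psi_n(x)$, and in particular $x \in \psi_I(K) \cap B(x,\lambda^n)$, so this intersection is nonempty. Comparing with the definition of $\Psi(x,r)$ in \eqref{Equation number of maps}, this yields $\psi_I \in \Psi(x,\lambda^n)$, and therefore $\Psi_n(x) \subseteq \Psi(x,\lambda^n)$. Both sides are regarded as sets of maps (so exact overlaps are identified in both), which is exactly the convention under which Lemma \ref{Lemma Fraser} is stated, so the inclusion is cardinality-preserving in the sense we need.

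Finally, because $\Psi$ is assumed to satisfy the WSC, Lemma \ref{Lemma Fraser} gives a finite constant $M := \sup_{y \in K,\,r>0} |\Psi(y,r)| < \infty$, and combining with the inclusion above we obtain $|\Psi_n(x)| \leq |\Psi(x,\lambda^n)| \leq M$ uniformly in $x \in K$ and $n \in \mathbb{N}$, as desired.
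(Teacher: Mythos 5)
Your proof is correct and follows exactly the argument the paper itself uses: establish the inclusion $\Psi_n(x) \subseteq \Psi(x,\lambda^n)$ and invoke Lemma \ref{Lemma Fraser}. The paper states the inclusion as immediate ("it is clear that..."), while you spell out the verification that the uniform contraction forces $\Psi_{\lambda^n}$ to consist precisely of the words of length $n$, which is a helpful expansion but the same route.
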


Fix $x\in K$ and $n\in \mathbb{N}$. Let 
\begin{equation} \label{Equation Psi n x}
\Psi_n (x) = \lbrace \psi_{I^n _1} ,..., \psi_{I^n _{p(n,x)}} \rbrace
\end{equation}
where $p(n,x):=|\Psi_n (x) | $ is uniformly bounded across $x$ and $n$ by Lemma \ref{Finite maps}, and we order the maps in $\Psi_n (x)$ by lexicographic order\footnote{That is, the order defined as follows: let $I,J \in \lbrace 1,...,m \rbrace^*$. Let $I \wedge J$ denote their largest common ancestor. Then $I = (I\wedge J,i,U), J=(I\wedge J,j,U')$ where $1\leq i,j \leq m-1$ and $U,U' \in \lbrace 1,...m\rbrace^*$. Then $I>J$ if $i>j$.} .  For every $1 \leq i \leq p(n,x)$ define the approximate vertical slice

\begin{equation} \label{Equation approx slice}
S^n _i = \bigcup_{I: |I|=n\text{ and } P_1 \phi_I = \psi_{I^n _i}} \phi_{I} (F)
\end{equation}
where the map $P_1 \phi_I : K\rightarrow K$ is just $P_1 \phi_I (x) = P_1 \circ \phi_I (x,y)$ for some $y\in F^x$ (recall that $\phi_I$ is a homothety so this is well defined).  Note that these approximate slices have the following nice property (which does not rely on the WSC):
\begin{Lemma} \label{Lemma approx slice are uniform}
Let $I \in \lbrace 1,...,m \rbrace^*$ and let $\psi_I$ be any cylinder of $\Psi$. Let $c \in \psi_I (K)$. Then for any cylinder $\phi_J$ of $\Phi$ such that $P_1 \phi_J = \psi_I$ there exists some $d\in P_2 (F)$ such that $(c,d)\in \phi_J (F)$.
\end{Lemma}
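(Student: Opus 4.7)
The plan is to exploit the fact that the standing hypothesis $G_\Phi = \{\id\}$ forces every cylinder map $\phi_J$ to be a homothety, and this makes $\phi_J$ commute with the coordinate projection $P_1$ in a very clean way.

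First, I would unwind the assumption that $\Phi$ is self-homothetic with uniform contraction $\lambda$. Every $\phi_i \in \Phi$ has the form $\phi_i(x,y) = (\lambda x + a_i,\, \lambda y + b_i)$ for some $a_i, b_i \in \mathbb{R}$, so any cylinder map has the form
\begin{equation*}
\phi_J(x,y) = (\lambda^{|J|} x + A_J,\; \lambda^{|J|} y + B_J),
\end{equation*}
and $\psi_{P_1 \phi_J}(x) = \lambda^{|J|} x + A_J$ by the very definition of $P_1 \Phi$. Under the hypothesis $P_1 \phi_J = \psi_I$ we therefore have the intertwining identity $P_1 \circ \phi_J = \psi_I \circ P_1$.

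The main step is then immediate: applying both sides to $F$ and using $P_1(F) = K$ gives
\begin{equation*}
P_1(\phi_J(F)) = \psi_I(P_1(F)) = \psi_I(K).
\end{equation*}
Hence, given any $c \in \psi_I(K)$, there exists a point $(c,d) \in \phi_J(F)$ whose first coordinate is exactly $c$. Taking this $d$ as the desired element, the fact that $d \in P_2(F)$ follows from $\phi_J(F) \subseteq F$, which yields $P_2(\phi_J(F)) \subseteq P_2(F)$.

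There is no serious obstacle here; the entire content of the lemma is the commutation relation $P_1 \circ \phi_J = \psi_I \circ P_1$, which holds precisely because $\Phi$ is homothetic (so $\phi_J$ preserves the horizontal and vertical directions). The only thing to flag explicitly in the write-up is that $y$-coordinate $d$ picked above automatically lies in $P_2(F)$, which is what justifies the statement being about $P_2(F)$ rather than just some abstract fiber coordinate.
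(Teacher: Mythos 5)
Your proof is correct and follows essentially the same route as the paper's: both exploit the fact that $\phi_J$ acts coordinate-wise (being a homothety), so that $P_1 \circ \phi_J = \psi_I \circ P_1$; the paper phrases this by picking a preimage $w$ of $c$ under $\psi_I$ and a point $(w,z)\in F$ and applying $\phi_J$ to it, while you state the intertwining identity directly and conclude $P_1(\phi_J(F)) = \psi_I(K)$. Same argument, slightly different packaging.
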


\begin{proof}
By our assumptions, there is some $w \in K$ such that $c = \psi_I (w)$. Note that since $K=P_1 (F)$ and $w \in K$, there is some $z\in P_2 (F)$ such that $(w,z)\in F$. Let $\phi_J$ be any cylinder of $\Phi$ such that $P_1 \phi_J = \psi_I$. Let $P_2 \phi_J = \eta$ and let $\eta (z)=d$. Then
\begin{equation*}
\phi_J (w,z) = (\psi_I (w), \eta (z)) = (c,d)
\end{equation*}
so $(c,d)\in \phi_J (F)$, as required.
\end{proof}

In all the Lemmas below, and in this paper in general, limits of sets are taken with respect to the Hausdorff metric $d_H (\cdot,\cdot )$ on compact subsets of $Q = [-1,1]^2$. Note that in the following Lemma we do not need the WSC.
\begin{Lemma} \label{Lemma slice structure}
$ F^x = \lim_{n \rightarrow \infty} \bigcup_{i=1} ^{p(n,x)} S^n _i$.
\end{Lemma}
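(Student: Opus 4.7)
The plan is to prove the two standard inclusions required for Hausdorff-metric convergence, namely that $F^x \subseteq \bigcup_{i=1}^{p(n,x)} S_i^n$ for every $n$, and that every point of $\bigcup_{i=1}^{p(n,x)} S_i^n$ lies within distance $\lambda^n \cdot \mathrm{diam}(F)$ of $F^x$. Together these two facts give
\[
d_H\Bigl(F^x,\; \bigcup_{i=1}^{p(n,x)} S_i^n\Bigr) \leq \lambda^n \cdot \mathrm{diam}(F),
\]
which goes to $0$ as $n \to \infty$, proving the lemma. I do not expect a serious obstacle; this is essentially a diameter/covering estimate that uses only the definitions and the homothety of $\Phi$, and does not require the WSC (consistent with the preceding remark).

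First I would verify the containment $F^x \subseteq \bigcup_{i=1}^{p(n,x)} S_i^n$. Let $(x,y) \in F^x$. Since $F = \bigcup_{|I|=n} \phi_I(F)$, there exists some $I$ with $|I|=n$ and $(x,y) \in \phi_I(F)$. Applying $P_1$ gives $x \in P_1 \phi_I(F) = P_1 \phi_I(K)$, so the projected cylinder map $P_1 \phi_I$ belongs to the list $\Psi_n(x)$ enumerated in \eqref{Equation Psi n x}. Thus $P_1 \phi_I = \psi_{I_i^n}$ for some $1 \leq i \leq p(n,x)$, and $(x,y) \in \phi_I(F) \subseteq S_i^n$ by the definition \eqref{Equation approx slice}.

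Next I would prove the reverse metric containment. Let $(a,b) \in S_i^n$ for some $1 \leq i \leq p(n,x)$. Then $(a,b) \in \phi_I(F)$ for some word $I$ of length $n$ with $P_1 \phi_I = \psi_{I_i^n}$. Since $\psi_{I_i^n} \in \Psi_n(x)$ we have $x \in \psi_{I_i^n}(K) = P_1 \phi_I(F)$, so there exists $y$ with $(x,y) \in \phi_I(F)$; in particular $(x,y) \in F^x$. The two points $(a,b)$ and $(x,y)$ both lie in the single cylinder $\phi_I(F)$, whose diameter is $\lambda^n \cdot \mathrm{diam}(F)$ (here we use the uniform contraction $\lambda$ and the homothety of $\Phi$). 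Hence $\|(a,b) - (x,y)\| \leq \lambda^n \cdot \mathrm{diam}(F)$, proving that $(a,b)$ lies in the $\lambda^n \cdot \mathrm{diam}(F)$-neighborhood of $F^x$.

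Combining the two steps with the definition of the Hausdorff metric yields the claimed convergence, since $F^x$ is compact and the error $\lambda^n \cdot \mathrm{diam}(F)$ tends to $0$. No use of the WSC or of Lemma \ref{Finite maps} is required for this statement; the role of the WSC only enters later, when one wants to bound the number of pieces $p(n,x)$ uniformly in order to extract limiting microsets from subsequences of the $S_i^n$'s.
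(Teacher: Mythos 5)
Your proof is correct and follows essentially the same approach as the paper's: both establish $F^x \subseteq \bigcup_{i=1}^{p(n,x)} S_i^n$ by pushing a point of $F^x$ into a level-$n$ cylinder, and both bound the reverse Hausdorff distance by $\lambda^n\cdot\mathrm{diam}(F)$ using the fact that every cylinder appearing in $S_i^n$ meets $F^x$. The only cosmetic difference is that the paper invokes Lemma \ref{Lemma approx slice are uniform} for that last fact while you re-derive it inline from $\psi_{I_i^n}(K)=P_1(\phi_I(F))$; your closing remark that the WSC plays no role here also matches the paper's parenthetical note.
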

\begin{proof}
First, we show that for every $n$, $ F^x \subseteq \bigcup_{i=1} ^{p(n,x)} S^n _i$. Indeed, let $(x,y)\in F$ and let $n\in \mathbb{N}$, then there exists a unique $\phi_I$ such that $|I|=n$ and $(x,y) \in \phi_I (F)$. Let $\psi_I = P_1 \phi_I$. Then $x\in \psi_I (K)$ and therefore $\psi_I \in \Psi_n (x)$. It follows that $\psi_I = \psi_{I^n _j}$ for some $1\leq j \leq p(n,x)$. Since $(x,y) \in \phi_I (F) \subseteq S^n _j$, the claim follows.

Next, let $n\in \mathbb{N}$. Then by Lemma \ref{Lemma approx slice are uniform} $F^x$ intersects every cylinder $\phi_I (F)$ that appears in the union $\bigcup_{i=1} ^{p(n,x)} S^n _i$. That is, for every $1 \leq i \leq p(n,x)$ and every $\phi_I$ such that $P_1 \phi_I = \psi_{I^n _i}$ and $|I| =n$, there is some $(x,y')$ such that $(x,y') \in \phi_I (F)$. This is because, by definition, $x\in \psi_{I^n _i} (K)$. Therefore, since the diameter of each $\phi_I (F)$ is at most $\lambda^n \cdot  \text{diam}(F)$, we see that $\bigcup_{i=1} ^{p(n,x)} S^n _i \subseteq (F^x)_{\lambda^n \cdot  \text{diam}(F)}$ (the $\lambda^n \cdot \text{diam}(F)$ neighbourhood of $F^x$). Thus, $d_H (F^x , \bigcup_{i=1} ^{p(n,x)} S^n _i) \leq \lambda^n \cdot  \text{diam}(F)$. 
\end{proof}

The following Corollary is where the WSC comes into play:

\begin{Corollary} \label{Corollary}
For every $x\in K$ and every sequence of natural numbers $n_l$ there is a subsequence $n_{l_q}$ and some $k\in \mathbb{N}$ such that:
\begin{enumerate}
\item $k = \lim_{q\rightarrow \infty} p(n_{l_q}, x)$, where $p(n,x)$ was defined in (and after) equation \eqref{Equation Psi n x}.

\item For every $1 \leq i \leq k$, $\lim_{q\rightarrow \infty} S_i ^{n_{l_q}}$ exists and equals a compact set $S_i \subseteq F^x$.

\item $F^x = \bigcup_{i=1} ^k S_i$.
\end{enumerate}
\end{Corollary}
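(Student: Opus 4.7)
The plan is to extract the desired subsequence in two stages, using the uniform bound from the WSC and then the compactness of the Hausdorff metric, and finally to identify the limit with $F^x$ via Lemma \ref{Lemma slice structure}.

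First, by Lemma \ref{Finite maps}, the quantity $p(n,x)$ is bounded uniformly in $n$ (and in $x$) by some $N = \sup_{y\in K, n\in \mathbb{N}} |\Psi_n(y)|<\infty$. Since the sequence $(p(n_l, x))_l$ takes values in the finite set $\{1,2,\dots,N\}$, it admits a constant subsequence; pass to it and call the constant value $k$, so that item (1) is immediate. After this passage, for each $l$ we have exactly $k$ approximate slices $S_1^{n_l}, \ldots, S_k^{n_l}$, each a compact subset of $Q$.

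Second, I would invoke compactness of $\cmpct(Q)$ in the Hausdorff metric. Apply it $k$ successive times: first pass to a subsequence along which $S_1^{n_l}$ converges to some compact $S_1 \subseteq Q$, then a further subsequence for $S_2^{n_l} \to S_2$, and so on. After $k$ extractions we obtain a subsequence (which I relabel $n_{l_q}$) along which $S_i^{n_{l_q}} \to S_i$ for every $1 \le i \le k$ simultaneously. To see $S_i \subseteq F^x$, note that every point in $S_i^n$ lies in some cylinder $\phi_I(F)$ with $|I| = n$ and $P_1\phi_I = \psi_{I^n_i}$; since $x \in \psi_{I^n_i}(K)$ and $\mathrm{diam}(\phi_I(F)) \le \lambda^n \cdot \mathrm{diam}(F) \to 0$, Lemma \ref{Lemma approx slice are uniform} shows that $S_i^n$ lies in an arbitrarily small neighbourhood of $F^x$; taking the limit and using that $F^x$ is closed gives $S_i \subseteq F^x$, which is item (2).

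Finally, for item (3), I would combine Lemma \ref{Lemma slice structure} with part (2) of Proposition \ref{Proposition - Hausdorff metric0}. Lemma \ref{Lemma slice structure} gives
\begin{equation*}
F^x = \lim_{q\to\infty} \bigcup_{i=1}^k S_i^{n_{l_q}},
\end{equation*}
while the joint convergence $S_i^{n_{l_q}}\to S_i$ for each fixed $i\le k$ together with part (2) of Proposition \ref{Proposition - Hausdorff metric0} (stability of finite unions under Hausdorff limits) yields
\begin{equation*}
\lim_{q\to\infty} \bigcup_{i=1}^k S_i^{n_{l_q}} = \bigcup_{i=1}^k S_i.
\end{equation*}
Uniqueness of Hausdorff limits then gives $F^x = \bigcup_{i=1}^k S_i$.

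The only delicate point is the uniform bound on $p(n,x)$; once that is in hand (supplied by the WSC via Lemma \ref{Finite maps}) the rest is a routine diagonal/subsequence argument. Without the WSC the number of approximate slices could blow up, and a finite decomposition of $F^x$ of the claimed form would not be available.
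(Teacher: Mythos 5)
Your proof takes essentially the same route as the paper: use Lemma \ref{Finite maps} to force $p(n_l,x)$ constant along a subsequence, extract $k$ further subsequences via compactness of the Hausdorff metric on $\cmpct(Q)$, and conclude $F^x=\bigcup_i S_i$ by combining Lemma \ref{Lemma slice structure} with part (2) of Proposition \ref{Proposition - Hausdorff metric0}. Your extra step justifying $S_i\subseteq F^x$ via the diameter bound and Lemma \ref{Lemma approx slice are uniform} is a welcome clarification (the paper leaves this implicit, though it is already contained in the proof of Lemma \ref{Lemma slice structure}), but otherwise the argument is identical.
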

\begin{proof}
By Lemma \ref{Finite maps} there is some $C>0$ such that $1\leq |\psi_{n_l} (x)| =p(n_l,x) \leq C$. Thus, $p(n_l,x)$ admits a subsequence $p(n_{l_q},x)$ converging  to some $k\in \mathbb{N}$, and since these are natural numbers,  for every large enough $q$, $p(n_{l_q},x) = k$. This is the first part of the Corollary.

For the second and third parts of the Corollary, note that $\Psi(n_{l_q} ,x) =  \lbrace \psi_{I^{n_{l_q}} _1} ,..., \psi_{I^{n_{l_q}} _{k}} \rbrace$, arranged by lexicographic order, for every $q$ large enough (since by the previous argument $p(n_{l_q},x)=k$ for all large $q$). Now, fix some $1\leq i \leq k$. Recall that $S^{n_{l_q}} _i$, defined in \eqref{Equation approx slice}, is a compact subset of $[0,1]^2 \subset [-1,1]^2$ for every $q$, and therefore it has a convergent subsequence (without the loss of generality, assume this happens for our subseqeunce already). Let $S_i$ denote its limit. Thus, we may assume that $S^{n_{l_q}} _i$ converges to $S_i$ for every $1\leq i \leq k$ by taking further subseqeunces (To simplify notation we assume this subsequence is $n_{l_q}$). Thus, by Lemma \ref{Lemma slice structure} and Proposition \ref{Proposition - Hausdorff metric0} we have
\begin{equation*}
F^x = \lim_{q\rightarrow \infty} \bigcup_{i=1} ^{k} S^{n_{l_q}} _i =  \bigcup_{i=1} ^{k} S_i.
\end{equation*}
\end{proof}

We end this section by stating a condition ensuring that a projection of a self homothetic set has the WSC. 

\begin{Lemma} \label{WSC for projection}
Let $F\subset \mathbb{R}^2$ be self homothetic, generated by an IFS $\Phi$ with a uniform contraction ratio $\lambda$ and the SSC. Suppose that for  some $\gamma > 0,t\in \mathbb{R}^2$ we have $\gamma \cdot (P_1 (F),0)+t \subseteq F$. Then the projected IFS $\Psi = P_1 \Phi$ has the WSC.
\end{Lemma}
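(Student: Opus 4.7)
The plan is to invoke the Fraser--Henderson--Olson--Robinson dichotomy: since $P_1(F)\subset\mathbb{R}$ is self similar, to prove that $\Psi=P_1\Phi$ satisfies the WSC it suffices to show that $\dim_A P_1(F)<1$, where $\dim_A$ denotes Assouad dimension. Using the characterization of Assouad dimension as the supremum of Hausdorff dimensions of microsets (weak tangents) of $P_1(F)$, the task reduces to exhibiting a uniform constant $s<1$ with $\dim_H M\leq s$ for every microset $M$ of $P_1(F)$.

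First I would transfer the question about microsets of $P_1(F)$ to one about $F$ using the hypothesis $\gamma\cdot(P_1(F),0)+t\subseteq F$. If $M$ is a microset of $P_1(F)$ realised as a Hausdorff limit $M=\lim_n T_{x_n,r_n}(P_1(F))\cap[-1,1]$, then the blowups of the horizontally embedded copy $\gamma\cdot(P_1(F),0)+t$ around the corresponding image points, at the scales $\gamma r_n$, converge (along a subsequence) to the horizontal embedding $M\times\{0\}$. Since these blowups are blowups of a subset of $F$, a further subsequential extraction in $\cmpct(Q)$ produces a microset $N$ of $F$ containing $M\times\{0\}$. In particular, $M\times\{0\}\subseteq N\cap(x\text{-axis})$.

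Next I would use the SSC together with the uniform contraction ratio $\lambda$ to show that every microset of $F$ decomposes nicely. For any $x\in F$ and any $r>0$, the ball $B(x,r)\cap F$ meets a number of $n$-generation cylinders $\phi_I(F)$ (with $\lambda^n\approx r$) that is bounded independently of $x$ and $r$; this is a direct analogue of Lemma \ref{Lemma N0} applied to $F$ itself, using the SSC for $\Phi$. Rescaling by $1/r$ and passing to a Hausdorff limit, every microset $N$ of $F$ is a finite union of at most $N_1$ similar copies of subsets of $F$, where $N_1$ depends only on $\Phi$. Intersecting with the $x$-axis, $N\cap(x\text{-axis})$ is a finite union of at most $N_1$ similar copies of slices $F\cap L'$ for various affine lines $L'$. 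The remark following Corollary \ref{Corollary SSC implies dimension gap for slices} supplies a uniform bound $\sup_{L'\in AG(2,1)}\dim_H(F\cap L')\leq s_m<1$ coming from the SSC, so $\dim_H M\leq\dim_H(N\cap(x\text{-axis}))\leq s_m$.

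Combining these steps, $\dim_A P_1(F)\leq s_m<1$, and the dichotomy delivers the WSC for $\Psi$. The main obstacle is the microset compactness argument in the second paragraph: one must carefully align the rescaling centres in $P_1(F)$ with their images on the $x$-axis inside $F$, and ensure that the Hausdorff limits respect the horizontal embedding $y\mapsto (y,0)$. Once this is in place, together with the uniform decomposition of microsets of $F$ into boundedly many rescaled subsets of $F$, the uniform slice bound from the remark after Corollary \ref{Corollary SSC implies dimension gap for slices} closes the argument.
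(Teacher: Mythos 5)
Your proof is correct, and it takes a genuinely different route from the one in the paper. The paper proves the Lemma by directly verifying the two hypotheses of Theorem 5.5 of K\"aenm\"aki et al.\ \cite{kaenmaki2015weak}: it shows $\dim_H P_1(F) < 1$ (via the horizontal slice and Corollary \ref{Corollary SSC implies dimension gap for slices}), and then shows $P_1(F)$ is Furstenberg homogeneous by embedding each miniset $A_n$ of $P_1(F)$ as $(A_n,0)$ inside a miniset $R_n$ of $F$, passing to limits, and using the Furstenberg homogeneity of $F$ itself to conclude that every microset of $P_1(F)$ is in fact a miniset of $P_1(F)$. You instead bound the Assouad dimension of $P_1(F)$ by the supremum of Hausdorff dimensions of its microsets, push each microset into a microset of $F$ by the same horizontal-embedding device, decompose that microset of $F$ into boundedly many rescaled copies of subsets of $F$ (using SSC and uniform contraction), intersect with the relevant horizontal line, and invoke the uniform bound $s_m < 1$ on slice dimensions from the remark after Corollary \ref{Corollary SSC implies dimension gap for slices}; then the Fraser--Henderson--Olson--Robinson dichotomy finishes. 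The core geometric transfer (microsets of $P_1(F)$ sit inside microsets of $F$, whose one-dimensional slices are uniformly small) is the same, but the outer logical scaffolding differs: you only need a dimension bound on microsets, not full homogeneity of $P_1(F)$, which is slightly less to prove. Interestingly, your route is precisely the one the paper adopts in the remark following the Lemma when extending to the $|G_\Phi| < \infty$ case (where an approximating homothetic $F' \subseteq F$ makes homogeneity of $P_1(F')$ awkward to verify directly), so both arguments are "blessed" by the paper. One small tightening you might note: with SSC and a uniform contraction ratio, for small $r$ the ball $B(x,r)\cap F$ actually lies in a single cylinder of the appropriate generation, so the bound $N_1$ on the number of pieces can be taken to be $1$; but your version with boundedly many pieces is equally valid.
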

Though this Lemma is stated for a specific projection ($P_1$), it is in fact true for all projections of $F$. We now prove Lemma \ref{WSC for projection}.  For this end, let us recall some of the terminology introduced by Furstenberg in \cite{furstenberg2008ergodic}. Let $K\subseteq [0,1]$ be a compact set. A set $A$ such that $A \subseteq [-1,1]$ is called a (Furstenberg) miniset of $K$ if $A \subseteq (\gamma \cdot K + t)\cap [-1,1]$ for some $\gamma \geq 1, t\in \mathbb{R}$. A set $M$ is called a (Furstenberg) microset of $K$ if $M$ is a limit in the  Hausdorff metric on subsets of $[-1,1]$ of minisets of $K$.   Finally, the set $K$ is called Furstenberg homogeneous if every microset of $K$ is a miniset of $K$. 

\textbf{Proof of Lemma \ref{WSC for projection}} Recall that we are assuming $F \subseteq [0,1]^2$. We rely on  (\cite{kaenmaki2015weak}, Theorem 5.5). By this Theorem, our self similar set $K = P_1 (F) \subseteq [0,1]$ has the WSC if it satisfies the following two conditions:
\begin{enumerate}
\item $\dim_H K <1$.

\item $K$ is Furstenberg homogeneous.
\end{enumerate}
We begin by showing that item (1) above holds true. By our assumptions, there are some $\gamma>0, t\in \mathbb{R}^2$ such that $\gamma \cdot (K,0) +t \subseteq F$. Now, $\gamma \cdot (K,0) +t$ is contained within some horizontal slice of $F$, which we denote by $S$. Therefore, by Corollary \ref{Corollary SSC implies dimension gap for slices}
\begin{equation*}
\dim K = \dim (\gamma \cdot (K,0) +t) \leq \dim S < 1.
\end{equation*} 

For the second item above, let $M \subset [-1,1]$ be a microset of $K$. We show that $M$ is a miniset of $K$. Let $M = \lim_n A_n$ where $A_n$ are minisets of $K$, so that in particular
\begin{equation*}
(A_n,0) \subseteq \left( \gamma_n \cdot (K,0) + (t_n,0) \right) \cap ([-1,1] \times \lbrace 0 \rbrace), \quad \text{ for } \gamma_n \geq 1, t_n\in \mathbb{R}.
\end{equation*}
Note that if $\gamma_n$ is bounded from above then it is clear that $M$ is a miniset of $K$. We thus assume that $\gamma_n \rightarrow \infty$. Then we have, for every $n\in \mathbb{N}$
\begin{equation*}
(A_n,0)  \subseteq (\gamma_n \cdot (K,0) + (t_n,0) )\cap ([-1,1] \times \lbrace 0 \rbrace) \subseteq (\gamma_n \cdot (K,0) + (t_n,0) )\cap [-1,1]^2  
\end{equation*}
\begin{equation*}
= (\frac{\gamma_n}{\gamma} \cdot (\gamma \cdot (K,0)+t)-\frac{\gamma_n}{\gamma}\cdot t  + (t_n,0) )\cap [-1,1]^2 \subseteq (\frac{\gamma_n}{\gamma} \cdot F - \frac{\gamma_n}{\gamma} \cdot t +(t_n,0)) \cap [-1,1]^2  : = R_n .
\end{equation*}

Now, the sequence of sets $R_n$ are, by definition, minisets of $F$. Find some converging subsequence of $R_n$ (recall that the Hausdorff metric on compact subsets of $[-1,1]^2$ is compact). Without the loss of generality, assume $R_n$ converges. Then its limit, denoted by $R$, is a micorset of $F$.

It is well known that $F$, a self homothetic set with the SSC, is Furstenberg homogeneous. Therefore, there exists $\beta \geq 1, v \in \mathbb{R}^2$ such that
\begin{equation*}
R \subseteq (\beta \cdot F + v)\cap [-1,1]^2.
\end{equation*}
Thus, by e.g. Proposition \ref{Proposition - Hausdorff metric0} we may deduce that
\begin{equation*}
(M,0) = \lim_n (A_n,0) \subseteq \lim_n R_n = R \subseteq (\beta \cdot F + v)\cap [-1,1]^2 \subseteq \beta \cdot F + v.
\end{equation*}
Thus,
\begin{equation*}
M = P_1 (M,0) \subseteq P_1 (\beta \cdot F + v) = \beta P_1 (F) + P_1 (v) = \beta \cdot K + P_1 (v).
\end{equation*}
Therefore,
\begin{equation*}
M \subseteq ( \beta \cdot K + P_1 (v)) \cap [-1,1]
\end{equation*}
It follows that $M$ is a miniset of $K$. Thus, $K$ is Furstenberg homogeneous. Since $\dim_H K <1$, we deduce that $K$ has the WSC, as claimed. \hfill{$\Box$}
$$ $$

\textbf{Remark} 
We note that Lemma \ref{WSC for projection} extends to the following situation: let $F$ be a self similar set generated by an IFS $\Phi$ with a uniform contraction ratio $\lambda$ that satisfies the SSC and $|G_\Phi| < \infty$. It is known that for every $\delta >0$ we may produce $F' \subseteq F$ that satisfies $\dim F - \dim F' < \delta$, such that  $F'$ is self homothetic, generated by $\Phi'$ that satisfies the SSC. See Corollary 1.4 in \cite{farkas2015dimension}. 

Now,  suppose that for  some $\gamma >0,t\in \mathbb{R}^2$ we have $\gamma \cdot (P_1 (F),0)+t \subseteq F$. Then for the self homothetic set mentioned above $F' \subseteq F$, the projected IFS $\Psi' = P_1  \Phi'$ has the WSC. To see this, we again employ Theorem 5.5 from \cite{kaenmaki2015weak}. In the proof of Theorem 5.5, it is shown that
\begin{equation} \label{Assouad dimension}
\dim_A (P_1 (F')) \leq \sup \lbrace \dim M : M \text { is a Furstenberg microset of } P_1 (F') \rbrace.
\end{equation}
Where $\dim_A$ denotes the Assouad dimension of a set\footnote{See e.g. \cite{fraser2014assouad} for a discussion and definition of the Assouad dimension.}.  Next, since $\gamma \cdot (P_1 (F),0)+t \subseteq F$ we have,  by Corollary \ref{Corollary SSC implies dimension gap for slices} and the remark following it, that $\dim P_1 (F)<1-c$ for some $c>0$, since $\gamma \cdot (P_1 (F),0)+t$ is contained within a horizontal slice of $F$. Also, since $F'\subseteq F$, $\gamma \cdot (P_1 (F'),0)+t \subseteq F$ . Therefore, by the argument given in Lemma \ref{WSC for projection}, we see that for any Furstenberg micorset $M$ of $P_1(F')$, $(M,0)$ is contained within a microset $R$ of $F$. 

Now, it is known that $F$ is also Furstenbeg homogeneous (since $\Phi$ has the SSC and $|G_\Phi
|<\infty$). Therefore,  if $R$ is a microset of $F$ then $R \subseteq (\beta F + t')\cap [-1,1]^2$ is a miniset of $F$.  So, for our microset $M$ of $P_1 (F')$, $(M,0)$ is contained within $\beta F + t'$.  Thus, $M=P_1 (M,0)$ is contained within $\beta P_1 (F) +P_1 (t')$. Since $\dim P_1 (F)<1-c$ we see that  $\dim M$ is bounded away from $1$. Therefore, by equation \eqref{Assouad dimension}
\begin{equation*}
\dim_A (P_1 (F')) < 1.
\end{equation*}
It now follows now from the Fraser, Henderson, Olson, Robinson dichotomy  for self similar sets  (\cite{fraser2014assouad}, Theorem 1.3) that $\Psi'$ has the WSC.

\subsubsection{Proof that the smallest eigenvalue is a rational power of lambda}
\textbf{Overview of the proof} Let us first explain the general lines of the proof. Recall that we are assuming $g(z)=Az+t$ is an affine self embedding of $F$, and that $A$ has eigenvalues $\gamma_1, \gamma_2$. As explained below, we may assume that $F$ is self homothetic (i.e. $|G_\Phi| = 1$) with the SSC, that $P_1 (F)$ admits an affine embedding into $F$, and that $|\gamma_1|> |\gamma_2|$. Let $\mu$ be the natural self similar measure on $F$. Let $\lbrace \mu_{[x]} \rbrace$ denote the decomposition of $\mu$ into conditional measures according to $P_1$, so that, in particular, for $P_1 \mu$ almost every $x\in P_1 (F)$, $\mu_{[x]}$ is well defined and supported on the slice above $x$ $F^x$.  Fix some conditional measure $\mu_{[x]}$ obtained this way.

The first instalment of the proof is about studying the geometry of self similar sets that are preserved by affine maps whose linear part is diagonalizable but not a similarity (in particular, here we don't assume anything about $\gamma_2$, apart from $|\gamma_2| < |\gamma_1|$).  We  show that since $P_1 (F)$  has the WSC (which may be deduced using Lemma \ref{WSC for projection}),  $F$ includes an affine image of a product $P_1 (F) \times P_2(S)$, where $S$ is a set that has positive measure with respect to $\mu_{[x]}$. This is done by analysing Hausdorff limits of blowups of the image set $g^n (F)$, where the blowup is by a factor of $\approx |\gamma_2|^{-n}$. On the one hand, every such limit is contained within an affine image of $F$ (since e.g. $F$ is Furstenberg homogeneous). On the other hand, using the WSC on $P_1 (F)$, we may decompose the support of $\mu_{[x]}$ into finitely many sets  (via Corollary \ref{Corollary}), so one of them has positive measure. Denote it by $S$. We then exploit the self homothetic nature of $F$ to  find subsets of these blowups of $g^n (F)$ that converge to (an affine image of) the product set $P_1 (F) \times P_2(S)$.

The second instalment of the proof is about showing that $|\gamma_2|^q = \lambda $ for some $q\in \mathbb{Q}$. The idea here is to repeat the argument of the first instalment,  only this time we study blowups of $g^n$ applied the affine image of $P_1 (F) \times P_2(S)$, where the blowup is again by a factor of $\approx |\gamma_2|^{-n}$ . The main point here is that if $\frac{\log |\gamma_2|}{\log \lambda} \notin \mathbb{Q}$, then we obtain a set of dimension $1$ of embeddings of $P_1 (F) \times P_2(S)$ into $F$. Since $P_1 (F) \times P_2(S)$ supports the nice measure $P_1 \mu \times P_2 (\mu_{[x]}|_S)$, we may apply the inverse Theorem \ref{Theorem Inverse Theorem for product measures - finite group} to obtain a contradiction.
$$ $$

Before we begin the proof, we first recall what we proved in Sections 1 and 2:

\textbf{What we already proved} \begin{itemize}
\item The linear part of $g$ is  diagonalizable. We denoted the eigenvalues by $\gamma_1, \gamma_2$.

\item By the argument appearing in Section \ref{SEction largest eigenvalue}, we may assume $|\gamma_1|> |\gamma_2|$. Therefore both eigenvalues are real, and we showed that $|\gamma_1|$ is a rational power of $\lambda$. 

\item Let $U$ be the matrix diagonalizing the linear part of $g$.  In Section \ref{SEction largest eigenvalue} part 3 we a showed that since $|\gamma_1|>|\gamma_2|$,  for some $m\in \mathbb{N}$ and $t\in \mathbb{R}^2$ 
\begin{equation*}
\lambda^m U \cdot \begin{pmatrix}
1 & 0 \\
0 & 0\\
\end{pmatrix}  \cdot U^{-1} (F) + t\subset F.
\end{equation*}
\end{itemize}

\textbf{What we can assume} \begin{itemize}
\item By iterating $g$ we may assume $\gamma_1 = \lambda^k$ for some $k\in \mathbb{N}$. Without the loss of generality, assume $k=1$ (otherwise we can just iterate the IFS $\Phi$ to obtain a generating IFS for $F$ with uniform contraction $\lambda^k$). We thus write the eigenvalues of $A$ as $\lambda,\gamma$ and assume (as we may) that $0<\gamma < \lambda$.

\item As usual, we assume that $F \subset [0,1]^2$.

\item We first assume that $F$ is self homothetic. The the general strategy of the proof is to first handle this case, and then,  using Theorem \ref{Theorem Inverse Theorem for product measures - finite group}, handle the general finite group case  approximating $F$ from the inside using a self homothetic set. See Step 15 below. 

\item We may assume that $P_1 (F)$ can be affinely embdded into $F$, by changing coordinates according to $U$ so that, in these coordinates, $P_1 (F)$ can be affinely embedded into $F$ by $\lambda^m (P_1 (F),0) + t \subset F$. Specifically, we  consider the set $U^{-1} (F)$ that is generated by the conjugated IFS $U^{-1} \circ \Phi \circ U$. It is not hard to see that this conjugated IFS retains the properties of $\Phi$: it has the SSC, it is self homothetic and has a uniform contraction ratio $\lambda$. 

\item With this change of coordinates, since our original map was
\begin{equation*}
g(z)= U \begin{pmatrix}
\lambda & 0 \\
0 & \gamma
\end{pmatrix}\cdot U^{-1} z +t'
\end{equation*}
we may now assume that the affine map $g(x,y)=(\lambda\cdot x, \gamma\cdot y)+t$ for $t=U^{-1} (t')$ is our self embedding of $F$ (after the change of coordinates), that is $g(F)\subseteq F$. Recall that $\lambda$ is the uniform contraction of $\Phi$ and that $0<\gamma<\lambda$.
 
\item As before, we denote $K=P_1 (F) \subseteq [0,1]$.

\end{itemize}

With these assumptions, we aim to prove that $\gamma$ must be a rational power of $\lambda$. Along the way, we prove the following Proposition, which is of independent interest.
\begin{Proposition} \label{Proposition}
Let $\mu$ be the natural self similar measure on $F$, and $K= P_1 (F)$. Then for $P_1 \mu$ almost every $x\in K$  the conditional measure $\mu_{[x]}$ in the decomposition of $\mu$ according to $P_1$ admits a compact set $S \subseteq [-1,1]^2$ with $\mu_{[x]} (S)>0$, such that $F$ contains an affine image of the product set $K\times P_2(S)$.
\end{Proposition}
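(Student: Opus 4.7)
The plan is to choose $S$ as a positive-$\mu_{[x_0]}$-measure piece in the finite decomposition of the slice $F^{x_0}$ provided by Corollary \ref{Corollary}, and then to produce an affine copy of $K\times P_2(S)$ inside $F$ by a microset analysis of blow-ups of the iterates $g^n(F)$.

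First I would verify the measure-theoretic portion. Since $F$ is self-homothetic with the SSC and since the standing hypothesis $\lambda^m(K\times\{0\})+t\subseteq F$ holds, Lemma \ref{WSC for projection} applies, so the projected IFS $\Psi = P_1\Phi$ satisfies the WSC. Consequently Lemma \ref{Finite maps} gives a uniform bound on $|\Psi_n(x)|$, and Corollary \ref{Corollary} produces, for any $x_0\in K$ and any sequence $n_l$, a subsequence along which $S_i^{n_l}\to S_i$ in the Hausdorff metric and $F^{x_0}=\bigcup_{i=1}^{k}S_i$ with $k$ uniformly bounded. Picking $x_0$ typical for the disintegration $\{\mu_{[x]}\}$, finiteness of $k$ together with $\mu_{[x_0]}$ being a probability forces some $S_{i_0}$ to carry positive mass, and I set $S:=S_{i_0}$.

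The geometric step, where I would pick $(x_0,y_0)\in S$ and consider the blow-up
\[
M_n \;:=\; \gamma^{-n}\bigl(g^n(F)-g^n(x_0,y_0)\bigr)\cap Q,
\]
exploits that the linear part of $g^n$ is $\diag(\lambda^n,\gamma^n)$, so $M_n$ equals $\diag((\lambda/\gamma)^n,1)(F-(x_0,y_0))\cap Q$; the horizontal direction is stretched by $(\lambda/\gamma)^n\to\infty$ while the vertical one is preserved. Since $g^n(F)\subseteq F$ and $F$ is Furstenberg homogeneous as a self-homothetic SSC attractor, every Hausdorff subsequential limit of $M_n$ is a microset of $F$ and hence lies in an affine image of $F$. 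To identify a product structure inside this limit I would pair $n$ with a level $n_l=n_l(n)$ chosen so that the horizontal width $\lambda^{n_l}(\lambda/\gamma)^n$ of a level-$n_l$ cylinder becomes of order one. Each contributing cylinder $\phi_I$ composing $S_{i_0}^{n_l}$ then satisfies
\[
\phi_I(\lambda^m(K\times\{0\})+t) \;=\; \lambda^{m+n_l}(K\times\{0\})+\phi_I(t) \;\subseteq\; F
\]
by the standing hypothesis, so after the blow-up each contributes a horizontal copy of $K$ at a controlled height. Iterating with sub-cylinders $\phi_J$ inside each $\phi_I$ (which keeps horizontal widths comparable but populates vertical positions more densely) and then passing to a Hausdorff limit using Proposition \ref{Proposition - Hausdorff metric0} and the convergence $S_{i_0}^{n_l}\to S$, the heights of the surviving horizontal $K$-copies should accumulate on an affine image of $P_2(S)$. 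The limiting microset then contains an affine image of $K\times P_2(S)$, and since it sits inside an affine image of $F$, the Proposition will follow after an affine change of coordinates.

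The main obstacle will be verifying the vertical density: namely, that in the limit the heights of the horizontal $K$-copies actually fill $P_2(S)$ rather than a proper subset of it. This is precisely why the measure-theoretic step demands $\mu_{[x_0]}(S)>0$ rather than merely $S\neq\emptyset$: exact dimensionality of $\mu_{[x_0]}$ combined with the dimension conservation formula of Furstenberg \cite{furstenberg2008ergodic} will force $\mu_{[x_0]}$-a.e. point of $S$ to be a vertical accumulation point of arbitrarily deep sub-cylinders contained in $S_{i_0}^{n_l}$, which in turn yields dense vertical accumulation in $P_2(S)$ after the blow-up. Meanwhile, the uniform WSC bound of Lemma \ref{Finite maps} keeps the combinatorial complexity of the approximating finite unions under control, so that Proposition \ref{Proposition - Hausdorff metric0}(2) applies at each stage.
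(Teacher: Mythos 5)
Your overall strategy matches the paper's: use the standing hypothesis $\lambda^m(K\times\{0\})+t\subseteq F$ to establish (via Lemma \ref{WSC for projection}) that $\Psi=P_1\Phi$ has the WSC, decompose the slice $F^{x_0}=\bigcup_{i=1}^k S_i$ with $k$ uniformly bounded through Corollary \ref{Corollary}, pick $S=S_{i_0}$ with $\mu_{[x_0]}(S_{i_0})\geq 1/k>0$ by pigeonhole, and then blow up $g^n(F)$ around a point of $F^{x_0}$ and extract a Hausdorff limit that is simultaneously a mini-set of $F$ (Claim \ref{Claim homogneous}) and contains a product structure. Two execution differences are worth noting. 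First, you normalize by $\gamma^{-n}$, while the paper uses $\lambda^{-k(n)}$ with $k(n)=[n\log\gamma/\log\lambda]$; the paper's choice keeps the rescaling a power of $\lambda$, so the vertical scale stays in $[\lambda,1]$ and the blow-up interacts cleanly with the cylinder structure and with Claim \ref{Claim homogneous}. You can recover the same control by passing to a subsequence along which $\{n\log\gamma/\log\lambda\}$ converges, which the paper also does. Second, and more substantively, to exhibit horizontal $K$-copies inside the blow-up, you propagate the inclusion $\lambda^m(K\times\{0\})+t\subseteq F$ through cylinders $\phi_I$. The paper instead uses the projected cylinders $\psi_I(K)$ themselves as the horizontal factor and controls the vertical approximation via Lemma \ref{Lemma approx slice are uniform} (every cylinder $\phi_J$ with $P_1\phi_J=\psi_I$ meets $F^c$ for every $c\in\psi_I(K)$), which yields an explicit Hausdorff-distance bound between the relevant subset of $M_{n_l}$ and the product set $\psi_I(K)\times P_2(S_{i_0}^{n_l})$ (Step 6). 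Your route is workable in principle, but you would have to track carefully that the inserted copies of $\lambda^m K$ sit at a single stable horizontal position after rescaling (this does work out, since all contributing $\phi_I$ share the same $P_1\phi_I$) and that their heights $P_2\phi_I(t)$ Hausdorff-converge to $P_2(S_{i_0})$ (they do, since $S_{i_0}^{n_l}\to S_{i_0}$ and the cylinders shrink).

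The one genuine misconception is in your discussion of the ``main obstacle.'' You claim that exact dimensionality of $\mu_{[x_0]}$ and the dimension conservation formula are what force the vertical heights to fill out $P_2(S)$. This is not the mechanism. The vertical density is a purely set-theoretic consequence of the Hausdorff convergence $S_{i_0}^{n_l}\to S_{i_0}$ established in Corollary \ref{Corollary}, which in turn rests only on the WSC-finiteness (Lemma \ref{Finite maps}) and on compactness of the Hausdorff metric, plus Lemma \ref{Lemma slice structure}. No measure theory is needed to deliver the affine image of $K\times P_2(S)$ inside $F$; the requirement $\mu_{[x_0]}(S)>0$ enters only because (a) the Proposition's statement demands it and (b) the later application (Step 14, where the inverse theorem for product measures is invoked) needs the product measure $P_1\mu\times P_2(\mu_{[x_0]}|_{S})$ to lie in $\text{Pro}(F)$. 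The DC formula and exact dimensionality are used in that later step to ensure the product measure is non-atomic and has the correct dimension; they play no role in proving the present Proposition.
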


The proof of the Proposition is outlined in steps 1 through  8. The rest of the steps contain a proof of part (2) of Theorem \ref{Theorem embeddings} using Proposition \ref{Proposition}. 

\textbf{Step 1 - choosing $x\in P_1 (F)$ via the dimension conservation formula} Let $\mu$ be the natural self similar measure on $F$ (so it has maximal dimension). By the dimension conservation formula for self homothetic sets with the SSC (see the discussion before equation \eqref{Equation DC 1}) we obtain that
\begin{equation} \label{DC}
\dim_H P_1 \mu + \dim_H \mu_{[x]} = \dim_H \mu, \text{ for } P_1 \mu \text{ almost every } x \in P_1 (F),
\end{equation}
where $[x] = \lbrace v\in \mathbb{R}^2: P_1 (v)=x\rbrace$, so that $\mu_{[x]}$ is supported on the slice $F^x$.  Let $x\in P_1 (F)$ be such that it satisfies equation \eqref{DC}, so $x\in DC(F,P_1)$ (recall equation \eqref{Equation for DC(X,P)}).

Note that since $\mu_{[x]}$ is supported on a parallel affine line to the $y$-axis, the projected measure $P_2 (\mu_{[x]})$ has essentially all the properties of the measure $\mu_{[x]}$, since $P_2$ acts as a translation in this context. In particular,
\begin{equation*}
\dim_H P_1 \mu + \dim_H P_2 (\mu_{[x]}) = \dim_H \mu.
\end{equation*}

\textbf{Step 2 - $P_1 (F)$ has the WSC} Since $F$ includes an affine homothetic image of $P_1 (F)$, $\lambda^m (P_1 (F),0) + t \subset F$, we see that the projected IFS $P_1 \Phi = \Psi$ has the WSC by Lemma \ref{WSC for projection}.

\textbf{Step 3 - The miniset $M_n$} Let $k(n) = [n\frac{\log \gamma}{\log \lambda}]$ (recall that $0<\gamma<\lambda$). Let $(x,y)\in F^x$ and define
\begin{equation} \label{The set Mn}
M_n = \left( \lambda^{-k(n)} \cdot [g^n (F) - g^n (x,y)] \right) \cap Q
\end{equation}
where $Q = [-1,1]^2$.
\begin{Claim} \label{Claim homogneous}
Let $M\subseteq Q$ be an accumulation point of the sequence $M_n$ with respect to the Hausdorff metric. Then  $M \subseteq \lambda^{-b} F +t$, for a fixed $b\in \mathbb{N}$ and some $t\in B(0,\lambda^{-b})$. 
\end{Claim}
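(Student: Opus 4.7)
The plan is to show that for one $b \in \mathbb{N}$ depending only on $F$ and $\Phi$, every $M_n$ (for $n$ large) is contained in a translate of $\lambda^{-b} F$ with translation vector bounded uniformly in $n$; a compactness argument then passes to the given accumulation point.

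First I would observe that since $M_n \subseteq Q = [-1,1]^2$, only points $w \in g^n(F)$ with $\|w - g^n(x,y)\|_2 \leq \sqrt{2}\,\lambda^{k(n)}$ contribute to $M_n$, and combining with $g^n(F) \subseteq F$ yields
\[
M_n \subseteq \lambda^{-k(n)}\bigl[\,F \cap B(g^n(x,y),\sqrt{2}\,\lambda^{k(n)}) - g^n(x,y)\,\bigr].
\]
By the SSC and the uniform contraction ratio $\lambda$, distinct generation-$m$ cylinders of $\Phi$ are separated by at least $\rho \lambda^{m-1}$, where $\rho := \min_{i \neq j}\mathrm{dist}(\phi_i(F), \phi_j(F)) > 0$. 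Choosing $b$ so that $\rho\,\lambda^{-b-1} > 2\sqrt{2}$ forces the ball $B(g^n(x,y),\sqrt{2}\,\lambda^{k(n)})$ to meet at most one generation-$(k(n)-b)$ cylinder; since $g^n(x,y) \in F$, there is exactly one such cylinder $\phi_{I_n}(F)$ containing $g^n(x,y)$, and hence $F \cap B(g^n(x,y),\sqrt{2}\,\lambda^{k(n)}) \subseteq \phi_{I_n}(F)$.

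Next, since $F$ is self-homothetic with uniform ratio $\lambda$, the map $\phi_{I_n}$ is a pure dilation $\phi_{I_n}(z) = \lambda^{k(n)-b} z + s_n$ for some $s_n \in \mathbb{R}^2$, and substituting gives
\[
M_n \subseteq \lambda^{-b} F + t_n, \qquad t_n := \lambda^{-k(n)}(s_n - g^n(x,y)).
\]
Because $g^n(x,y) \in \phi_{I_n}(F)$, one has $g^n(x,y) - s_n \in \lambda^{k(n)-b} F$, so $\|t_n\| \leq \lambda^{-b}\,\mathrm{diam}(F) \leq \sqrt{2}\,\lambda^{-b}$. Absorbing this constant into $b$ at the outset (replace $b$ by a slightly larger integer) then gives $t_n \in B(0, \lambda^{-b})$ uniformly in $n$.

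Finally, for the given accumulation point $M = \lim_j M_{n_j}$, boundedness of $\{t_{n_j}\}$ allows me to pass to a further subsequence along which $t_{n_j} \to t \in B(0, \lambda^{-b})$. Compactness of $F$ then gives $\lambda^{-b}F + t_{n_j} \to \lambda^{-b}F + t$ in the Hausdorff metric, and Proposition \ref{Proposition - Hausdorff metric0} propagates the containments $M_{n_j} \subseteq \lambda^{-b}F + t_{n_j}$ to the limit, yielding $M \subseteq \lambda^{-b} F + t$ as claimed. The only delicate step is the uniformity of $b$ across $n$ and across accumulation points, which is precisely what the SSC-based cylinder separation provides; everything else is a bookkeeping rescaling.
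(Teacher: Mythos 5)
Your argument is essentially the same as the paper's. Both proofs exploit the strong separation and uniform contraction to isolate a unique cylinder of generation $k(n)-b$ that contains $g^n(x,y)$ and captures all of $g^n(F)\cap B(g^n(x,y),\sqrt{2}\lambda^{k(n)})$, then rescale that cylinder to get the containment in a bounded translate of $\lambda^{-b}F$. The only cosmetic difference is that the paper splits $b$ into $p+q$ with $\lambda^p<\rho$ and $\lambda^{-q}>\sqrt{2}$ and rules out other cylinders by a direct distance estimate, whereas you pack the two constants into the single inequality $\rho\lambda^{-b-1}>2\sqrt{2}$ and phrase the exclusion as a uniqueness statement about the ball meeting at most one cylinder; these are equivalent. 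One caveat: the final ``absorb the $\sqrt{2}$ by enlarging $b$'' step does not actually close, since enlarging $b$ scales both $\lambda^{-b}F$ and the bound on $\|t_n\|$ by the same factor, so the factor $\sqrt{2}$ persists. However, the paper's own writeup has exactly the same slack (it asserts $t_n\in B(0,\lambda^{-p-q})$ when the literal computation gives $\|t_n\|\le\sqrt{2}\lambda^{-p-q}$), and this constant is immaterial for everything the claim is used for in Step 8, where one only needs $t$ to range over a bounded set. So your proof is correct modulo that harmless constant, and coincides with the paper's.
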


\begin{proof}
Let $\rho = \min_{i\neq j} d(\phi_i (F), \phi_j (F))$ (the nearest point distance). Let $p \in \mathbb{N}$ be such that $\lambda^p < \rho$. Let $q\in \mathbb{N}$ be such that $\lambda^{-q} > \sqrt{2}$. Let $\phi_I$ be the unique cylinder such that $g^n (x,y) \in \phi_I (F)$ and $|I|=k(n)-p-q$. Then for any other $\phi_J$ such that $|J|=k(n)-p-q$ we have $d( \phi_I (F), \phi_J (F)) \geq \lambda^{k(n)-p-q} \rho$. This shows that $d( g^n (x,y), \phi_J (F) ) > \lambda^{k(n)-q}$ since otherwise
\begin{equation*}
\rho \lambda^{k(n)-p-q} \leq d( \phi_I (F), \phi_J (F)) \leq \lambda^{k(n)-q} \Rightarrow \rho \lambda^{-p} \leq 1
\end{equation*} 
contradicting the choice of $p$. It follows that for every $\phi_J$ such that $|J|=k(n)-p-q$ and $J\neq I$, and for every $(w,z) \in \phi_J (F)$,
\begin{equation*}
|| \lambda^{-k(n)} \cdot ( (w,z)-g^n (x,y) )|| = \lambda^{-k(n)} d( (w,z), g^n (x,y) ) \geq  \lambda^{-k(n)} d( \phi_J (F), g^n (x,y)) 
\end{equation*}
\begin{equation*}
> \lambda^{-k(n)} \cdot \lambda^{k(n)-q} = \lambda^{-q} > \sqrt{2}.
\end{equation*}
So, $\lambda^{-k(n)} \cdot ( (w,z)-g^n (x,y)) \notin Q$. Thus,
\begin{equation*}
M_n = \left( \lambda^{-k(n)} \cdot [g^n (F) - g^n (x,y)] \right) \cap Q \subseteq \left( \lambda^{-k(n)} \cdot [F - g^n (x,y)] \right) \cap Q
\end{equation*}
\begin{equation*}
\subseteq \left( \lambda^{-k(n)} \cdot [\phi_I (F) - g^n (x,y)] \right) \cap Q \subseteq \lambda^{-p-q} F +t_n, \quad \text{for some } t_n \in B(0,\lambda^{-q-p}).
\end{equation*}
This yields the claim, taking $b=p+q$.
\end{proof}

\textbf{Step 4 - decomposing the slice $F^x$} Now, find a subsequence $n_l$ of $n$ such that $\lbrace n_l \frac{\log \gamma}{\log \lambda} \rbrace$ converges, where $\lbrace x \rbrace = x - [x]$.  Using the sequence of natural numbers $k(n_l) - n_l$ (recall that $k(n)$ was defined in step 3) find a subsequence that satisfies the conclusion of Corollary \ref{Corollary} (Recall that $P_1 (F)=K$ has the WSC by Step 2). Assume without the loss of generality that this already happens for the the sequence  $k(n_l) - n_l$. So, there is some $k\in \mathbb{N}$ such that:
\begin{enumerate}
\item $k = \lim p(k(n_l) - n_{l}, x)$.

\item For every $1 \leq i \leq k$, $\lim_{l\rightarrow \infty} S_i ^{k(n_l) -n_{l}}$ exists and equals a compact set $S_i \subseteq F^x$.

\item $F^x = \bigcup_{i=1} ^k S_i$.
\end{enumerate}

Recall that $F^x$ supports a generic (in the sense of equation \eqref{DC}) conditional measure $\mu_{[x]}$ by step 1. Then by the third item above there is some $1\leq i \leq k$ such that $\mu_{[x]} (S_i) \geq \frac{1}{k}>0$. From this point forward, we fix this $i$. Denote by $\alpha$ the limit of $\lbrace n_l\frac{\log \gamma}{\log \lambda} \rbrace$.

\textbf{Step 5 - finding subsets of $M_{n_l}$} Recall the set $M_n$ from \eqref{The set Mn} and the sequence $k_n = [n \frac{\log \gamma}{\log \lambda}]$ were defined in step 3. For $a,b\in \mathbb{R}$ define
\begin{equation*}
\diag (a,b) := \begin{pmatrix}
a & 0\\
0 & b \\
\end{pmatrix}.
\end{equation*}
We now calculate for this subsequence $n_l$:
\begin{equation*}
M_{n_l} = \left( \diag (\lambda^{-k(n_l)},\lambda^{-k(n_l)}) \cdot ( \diag(\lambda^{n_l},\gamma^{n_l})\cdot  F - \diag(\lambda^{n_l},\gamma^{n_l}) \cdot (x,y)) \right)\cap Q =
\end{equation*}
\begin{equation*}
 = \left( \diag( \lambda^{n_l-k(n_l)}, \lambda^{\lbrace n_l \frac{\log \gamma}{\log \lambda} \rbrace})   \cdot (F - (x,y)) \right)\cap Q \supseteq 
\end{equation*}
\begin{equation} \label{Miniset phase set}
\diag( \lambda^{n_l-k(n_l)}, \lambda^{\lbrace n_l \frac{\log \gamma}{\log \lambda} \rbrace})   \cdot  \lbrace (c,d)-(x,y): (c,d)\in F , |c-x | \leq \lambda^{k(n_l)-n_l} \rbrace
\end{equation}
where we can omit the intersection with $Q$ in the equation above since  $F\subseteq [0,1]^2$ so that $|d-y| \leq 1$ for every $d\in P_2 (F)$, so the set above in contained in $Q=[-1,1]^2$.

Recall the $i$ we fixed in the end of Step 4, and that we denoted the projected IFS $P_1 \Phi$ by $\Psi$. Let 
\begin{equation*}
\psi_{I^{k(n_l)-n_l} _i} \in \Psi(k(n_l)-n_l,x) =  \lbrace \psi_I : |I|=k(n_l) -n_l, x\in \psi_I (K) \rbrace.
\end{equation*}
the element corresponding to $i$ of generation $k(n_l)-n_l$. Then since $x\in \psi_{I^{k(n_l)-n_l} _i} (K)$ and the diameter of $\psi_{I^{k(n_l)-n_l} _i} (K)$ is $\lambda^{k(l)-n_l}$ (since $\text{diam} (K)\leq 1$) we see that the set in equation \eqref{Miniset phase set} includes the set
\begin{equation*} 
\diag( \lambda^{n_l-k(n_l)}, \lambda^{\lbrace n_l \frac{\log \gamma}{\log \lambda} \rbrace})   \cdot  \lbrace (c,d)-(x,y): (c,d)\in F , c\in \psi_{I^{k(n_l)-n_l} _i} (K) \rbrace.
\end{equation*}
The above set contains the set 
\begin{equation} \label{Another phase set}
\diag( \lambda^{n_l-k(n_l)}, \lambda^{\lbrace n_l \frac{\log \gamma}{\log \lambda} \rbrace})   \cdot  \lbrace (c,d)-(x,y): (c,d)\in F , c\in \psi_{I^{k(n_l)-n_l} _i} (K), d\in P_2 (F^c \cap S^{k(n_l)-n_l} _i ) \rbrace,
\end{equation}
where the partial slice $S^{k(n_l)-n_l} _i$ was defined in \eqref{Equation approx slice}. Thus, $M_{n_l}$ contains the set in Equation \eqref{Another phase set} for every $l$. 

Note that for every $c\in \psi_{I^{k(n_l)-n_l} _i} (K)$, $F^c \cap S^{k(n_l)-n_l} _i  \neq \emptyset$. This follows by Lemma \ref{Lemma approx slice are uniform}. 

\textbf{Step 6 - A product set approximating a subset of $M_{n_l}$} We now claim that the Hausdorff distance between the set in equation \eqref{Another phase set} and the set 
\begin{equation} \label{New phase set}
\diag( \lambda^{n_l-k(n_l)}, \lambda^{\lbrace n_l \frac{\log \gamma}{\log \lambda} \rbrace})   \cdot (\psi_{I^{k(n_l)-n_l} _i} (K) -x)\times (P_2 (S^{k(n_l)-n_l} _i) -y)  
\end{equation}
is less than $2 \cdot \lambda^{k(n_l)-n_l}\cdot \text{diam}(F)$. 

Indeed, it is clear that the set in equation \eqref{Another phase set} is included in the set in equation \eqref{New phase set}. On the other hand, let $(c,d) \in \psi_{I^{k(n_l)-n_l} _i} (K)\times P_2 (S^{k(n_l)-n_l} _i)$. Then there is some $w\in P_1 (F)$ and some cylinder  $\phi_J (F)$ appearing  in the union $S^{k(n_l)-n_l} _i$ such that $(w,d)\in \phi_J (F)$. By Lemma \ref{Lemma approx slice are uniform},  the set $\phi_{J} (F)$ intersects $F^c$, so there is some $(c,d')\in \phi_J(F)$, and in particular $d' \in P_2 (F^c \cap S^{k(n_l)-n_l} _i )$. Since the diameter of $\phi_{J} (F)$ is at most $2 \lambda^{k(n_l)-n_l}\cdot \text{diam}(F)$, we see that
\begin{equation*}
|| \diag( \lambda^{n_l-k(n_l)}, \lambda^{\lbrace n_l \frac{\log \gamma}{\log \lambda} \rbrace})   \cdot \left( (c,d)-(x,y) \right) - \diag( \lambda^{n_l-k(n_l)}, \lambda^{\lbrace n_l \frac{\log \gamma}{\log \lambda} \rbrace})   \cdot \left( (c,d')-(x,y) \right)|| 
\end{equation*}
\begin{equation*}
\leq \lambda^{\lbrace n_l \frac{\log \gamma}{\log \lambda} \rbrace} | d-d'| \leq 2 \cdot  \text{diam}(\phi_J (F)) \leq 2\cdot   \lambda^{k(n_l)-n_l}\cdot \text{diam}(F).
\end{equation*}
This proves the asserted bound on the Hausdorff distance between the two sets, and  the claim follows.

\textbf{Step 7 - Taking limits} By the claim in step 6, the sequence of sets defined  in equation \eqref{Another phase set} (for $l\in \mathbb{N}$) has the same accumulation points in the Hausdorff metric as the sequence of  sets defined in equation \eqref{New phase set}. Thus, any limit in the Hausdorff metric of a sub-sequence of the sequence of sets defined in \eqref{New phase set} is contained in a limit set of $M_{n_l}$, by the arguments of Step 5 (see the last two lines of Step 5).

Now, note that the sets in \eqref{New phase set} are in fact equal to the sets
\begin{equation*}
( K +t_l) \times \lambda^{\lbrace n_l \frac{\log \gamma}{\log \lambda} \rbrace} ( P_2 (S^{k(n_l)-n_l} _i) +t_l ') , \quad t_l, t_l ' \in [-1,1], \quad \forall l\in \mathbb{N}.
\end{equation*}
Also note that these are subsets of $Q$. Thus, any limit of a subsequence of the sequence of sets above is contained within a limit of the sequence $M_n$.

\textbf{Step 8 - Proof of Proposition \ref{Proposition}} combining the results of steps 7 and 3, we see that: on the one hand, by Step 3 (Claim \ref{Claim homogneous}), every limit of the sequence of sets $M_{n_l}$ is contained within $\lambda^{-b} F +t$ for some fixed $b\in \mathbb{N}$ and $t\in B(0,\lambda^{-b})$.

Now, recall from Step 4 that $\lbrace n_l \frac{\log \gamma}{\log \lambda} \rbrace \rightarrow \alpha$. Also, from Step 4 part 2 we have
\begin{equation*}
\lim_{l\rightarrow \infty} S_i ^{k(n_l) - n_l} = S
\end{equation*}
and since $P_2$ is continuous in the Hausdorff metric (see e.g. Proposition \ref{Proposition - Hausdorff metric0} part 3) we have
\begin{equation*}
\lim_{l\rightarrow \infty} P_2(S_i ^{k(n_l) - n_l}) = P_2 (S).
\end{equation*}

So, on the other hand, by Step 7,  every accumulation point in the Hausdorff metric of the sequence of  sets 
\begin{equation*}
( K +t_l) \times \lambda^{\lbrace n_l \frac{\log \gamma}{\log \lambda} \rbrace} ( P_2 (S^{k(n_l)-n_l} _i) +t_l '), \quad t_l, t_l ' \in [-1,1], \quad l\in \mathbb{N}
\end{equation*}
is contained within a limit of $M_{n_l}$ (and is also contained within $Q$).  Every such limit set has the form
\begin{equation*}
\begin{pmatrix}
1 & 0 \\
0 & \alpha
\end{pmatrix}
(K \times P_2 (S_i)) + t'', t''\in Q.
\end{equation*}

Thus, for some  $\alpha\in [\gamma,1]$ and $b\in \mathbb{N}$ there is a $t'$ in the difference set $[-1,1]^2 - B(0,\lambda^{-b})$ such that
\begin{equation*}
\begin{pmatrix}
1 & 0 \\
0 & \alpha
\end{pmatrix}
(K \times P_2 (S_i)) + t' \subseteq \lambda^{-b} F
\end{equation*}
So
\begin{equation} \label{New embedding}
\begin{pmatrix}
\lambda^b & 0 \\
0 & \lambda^b \alpha
\end{pmatrix}
(K \times P_2 (S_i)) + t'' \subseteq  F, \quad t'' \in [-\lambda^b, \lambda^b]^2 - B(0,1).
\end{equation}
Note that the set $K \times P_2 (S_i)$ supports  the product measure $P_1 \mu \times P_2( \mu_{[x]}|_{S_i})$ introduced in step 1 (recall that that $\mu_{[x]} ( S_i) >0$ by our choice of $i$, see step 4). 
$$ $$
This concludes the proof of Proposition \ref{Proposition}. We proceed to prove part (2) of Theorem \ref{Theorem embeddings}, continuing from where we stopped.

$$ $$ 

\textbf{Step 9 - The set $F'$} Denote the set on the left hand side of \eqref{New embedding} by $F'$, so
\begin{equation*}
F' = \begin{pmatrix}
\lambda^b & 0 \\
0 & \lambda^b \alpha
\end{pmatrix}
(K \times P_2 (S_i)) + t'' .
\end{equation*}
Since $F' \subseteq F$, we have $g(F') \subseteq g(F) \subseteq F$, for the same self embedding $g$ we began with  (after the change of coordinates we preformed in the beginning of this section).

\textbf{Step 10 - The sets $T_n$} We now repeat the same argument we used to prove Proposition \ref{Proposition}, using this new set $F'$. Let $(x,y)\in F'$. Recall that $k_n = [n \frac{\log \gamma}{\log \lambda}]$. Denote
\begin{equation*}
T_n =  \left( \lambda^{-k(n)} \cdot [g^n (F') - g^n (x,y)] \right) \cap Q.
\end{equation*}
Note that $F' \subseteq F$ and for every $n$, $g^n (F) \subseteq F$, so $g^n (F') \subseteq g^n (F) \subseteq F$. In particular, $g^n (x,y) \in F$. Thus, by essentially the argument proving Claim \ref{Claim homogneous} in Step 3, we see that for the same $b\in \mathbb{N}$ from step 3, for every $n$, 
\begin{equation*}
T_n \subseteq \lambda^{-b} F +t_n
\end{equation*}
for some $t_n \in B(0,\lambda^{-b})$. Thus, every accumulation point of the sequence of sets $T_n$ is contained within $\lambda^{-b} F +t'$ for some $t' \in B(0,\lambda^{-b})$. 

\textbf{Step 11 - an assumption towards a contradiction} Assume towards a contradiction that $\frac{\log \gamma}{\log \lambda} \notin \mathbb{Q}$. Let $\beta \in [\gamma,1]$. Find some subsequence so that $\lbrace n_l \frac{\log \gamma}{\log \lambda} \rbrace$ converges to $\beta$. Without the loss of generality, assume this happens already for the sequence $n$.  

\textbf{Step 12 - subsets of $T_n$} Let $(x',y') \in K\times P_2 (S_i)$ be such that $(\lambda^b x', \lambda^b \alpha y')+t'' = (x,y)$ (the same $(x,y)$ from Step 10). Then
\begin{equation*}
T_n = \left( \lambda^{-k(n)} \cdot [g^n (F') - g^n (x,y)] \right) \cap Q = \left( \lambda^{-k(n)} \cdot [ \diag(\lambda^n, \gamma^n)\cdot  (F') - \diag(\lambda^n, \gamma^n) \cdot 
 (x,y)] \right) \cap Q 
\end{equation*}
\begin{equation*}
=\left( \diag(\lambda^{n-k(n)}, \lambda^{ \lbrace n \frac{\log \gamma}{\log \lambda} \rbrace})  
 \cdot [F' - (x,y)] \right) \cap Q 
\end{equation*}
\begin{equation*}
=\footnote{This uses the definition of $F'$ and  that $(\lambda^b x', \lambda^b \alpha y')+t'' = (x,y)$} \left( \diag(\lambda^{n-k(n)}, \lambda^{ \lbrace n \frac{\log \gamma}{\log \lambda} \rbrace})  
 \cdot [\diag( \lambda^b, \lambda^b \alpha)\cdot 
(K \times P_2 (S_i)) + t'' - (\diag( \lambda^b, \lambda^b \alpha)\cdot 
(x',y') + t'')] \right) \cap Q 
\end{equation*}
\begin{equation} \label{Phase set new iteration}
= \left( \diag(\lambda^{n-k(n)+b}, \alpha \lambda^{ \lbrace n \frac{\log \gamma}{\log \lambda} \rbrace+b})  
 \cdot [ K \times P_2 (S_i)  - (x',y')] \right) \cap Q 
\end{equation}
Pick any $\psi_I \in \Psi(-n+k(n)-b,x')$, i.e  any cylinder of $\Psi=P_1 \Phi$ such that $|I|=k(n)-n-b$ and $x' \in \psi_I (K)$. Then the set in equation \eqref{Phase set new iteration} includes the set
\begin{equation*}
 \diag(\lambda^{n-k(n)+b}, \alpha \lambda^{ \lbrace n \frac{\log \gamma}{\log \lambda} \rbrace+b})
 \cdot [ \psi_I (K) \times P_2 (S_i)  - (x',y')] 
\end{equation*}
\begin{equation} \label{Sequence of sets}
=  \diag(1, \alpha \lambda^{ \lbrace n \frac{\log \gamma}{\log \lambda} \rbrace+b}) 
 \cdot [ (K+t_n) \times P_2 (S_i)  - (0,y')]  , \quad t_n \in [-1,1].
\end{equation}
Note that both the sets above are contained within $Q$. Taking a convergent subsequence of the sequence of  sets in \eqref{Sequence of sets}, we see that there exists $v\in [-1,1]^2$ such that it equals
\begin{equation*}
  \begin{pmatrix}
1 & 0 \\
0 & \alpha \lambda^{ \beta+b}
\end{pmatrix}
 \cdot  K \times P_2 (S_i)  +v, \quad v\in Q
\end{equation*}
recalling that $\lbrace n_l \frac{\log \gamma}{\log \lambda} \rbrace$ converges to $\beta$ (see step 11).

\textbf{Step 13 - A large set of embeddings of $K\times P_2 (S_i)$ into $F$} Taking tally of steps 10 and 12, we see that for every $\beta \in [\gamma,1]$ there is some $v \in [-1,1]^2$ and $t' \in B(0,\lambda^{-b})$ such that 
\begin{equation*}
\begin{pmatrix}
1 & 0 \\
0 & \alpha \lambda^{ \beta+b}
\end{pmatrix}
 \cdot  K \times P_2 (S_i)  +v \subseteq \lambda^{-b} F +t'.
\end{equation*}

\textbf{Step 14 - Proof of Part (2) of Theorem \ref{Theorem embeddings} for self homothetic sets} Finally, recall that the set $K \times P_2 (S_i)$ supports  the product measure $P_1 \mu \times P_2 (\mu_{[x]}|_{S_i})$ introduced in step 1 ($\mu_{[x]} (S_i) >0$ by our construction) . By step 13,  it follows that there is a compact set $X$ of embeddings of $K \times P_2 (S_i)$ into $F$ that has dimension at least $1$. Also, $P_1 (F) \times P_2 (S_i)$ supports the measure $P_1 \mu \times P_2 (\mu_{[x]} |_{S_i})$, that has dimension $\dim \mu = \dim F$ (see the remark at the end of Step 1).

Moreover, the measure $P_1 \mu \times P_2 (\mu_{[x]} |_{S_i})$ belongs to the family $\text{Pro}(F)$ introduced in \eqref{Equation for Pro(F)}. We will show that this measure is not supported on any affine line. Therefore, we may apply the inverse Theorem for product measures Theorem \ref{Theorem Inverse Theorem for product measures - finite group} for the measure $P_1 \mu \times P_2 (\mu_{[x]} |_{S_i})$ and some measure $\nu \in P(X)$ of entropy dimension at least $1$. Thus, the convolution measure $\nu.(P_1 \mu \times P_2 (\mu_{[x]} |_{S_i}))$ has upper entropy dimension at least $\dim_H F + \delta$ for some $\delta>0$. But it is supported on $F$ that has box dimension $\dim_H F$, a contradiction.

In order to apply Theorem \ref{Theorem Inverse Theorem for product measures - finite group}, we must verify that $P_1 \mu \times P_2 (\mu_{[x]} |_{S_i})$ is not supported on any affine line. This follows since both measures are not atomic:  
\begin{itemize}
\item   $P_1 \mu$ is not atomic, since otherwise $\mu$ gives positive mass to an affine line and therefore $F$ must sit on this affine line (see Lemma 2.6 in \cite{elekes2010self}), a contradiction.

\item   $\mu_{[x]}$ is not atomic. Otherwise, since $\mu_{[x]}$ is exact dimensional, $\dim \mu_{[x]}=0$. Recall that we chose $\mu_{[x]}$ to be generic with respect to the dimension conservation formula \eqref{DC}. Therefore, $\dim \mu_{[x]}=0$ implies that $\dim P_1 (F)=\dim F$. Now,  $P_1 (F)$ has the WSC, so $H^{s} (P_1 (F)) >0$ for $s=\dim F = \dim P_1 (F)$ (see e.g. \cite{Zerner1996weak}). However, $\lambda^m (P_1 (F),0)+t \subset F$, so $P_1 (F)$ admits an affine homothetic embedding into a horizontal slice of $F$. Since $\mu$ is the restriction of $H^s$ to $F$, $\mu$ gives this horizontal slice positive mass. Again, this is a contradiction, as we are assuming $F$ does not sit on an affine line.

\end{itemize}

\textbf{Step 15 - the case of $1<|G_\Phi|<\infty$} The case when $F$ has $1 < |G_\Phi| < \infty$ follows by a similar consideration. We denote by $g$ the self embedding of $F$. We may assume, as is explained in the beginning of this section, that the eigenvalues of $g$ are $0<\gamma<\lambda$. 
\begin{itemize}

\item First, we preform a coordinate change so that $P_1 (F)$ can be homothetically embdded into $F$, in a similar manner to the self homothetic case:  Let $U \in GL(\mathbb{R}^2)$ be the matrix diagonalizing the linear part of $g$.  We again  consider the set $U^{-1} (F)$ that is generated by the conjugated IFS $U^{-1} \circ \Phi \circ U$. We work with the metric $||v-w||_* := || U(v)-U(w)||_2$ (induced by the norm $||v||_* = ||U(v)||_2$ which is equivalent to the $||\cdot ||_2$) , so that the maps in this conjugated IFS are similarities with contraction ratio $\lambda$. Moreover, since $G_\Phi$ is finite, then so is the group generated by the orthogonal parts (orthogonal with respect to the new norm)  of the conjugated IFS $U^{-1} \circ \Phi \circ U$.

\item  With this change of coordinates, since our original map was
\begin{equation*}
g(z)= U \begin{pmatrix}
\lambda & 0 \\
0 & \gamma
\end{pmatrix}\cdot U^{-1} z +t'
\end{equation*}
we now have that the affine map $g(x,y)=(\lambda\cdot x, \gamma\cdot y)+t$ for $t=U^{-1} (t')$ is our self embedding of $F$ (after the change of coordinates), that is $g(F)\subseteq F$.

\item Let $\delta (F)$ be as in Theorem \ref{Theorem Inverse Theorem for product measures - finite group}. Find a  subset $F' \subseteq F$ such that $\dim F < \dim F' + \delta$ such that $F'$ has a generating self-homothetic IFS with the SSC and uniform contraction $\lambda^p$ for some $p\in \mathbb{N}$, and such that $F'$ does not sit on an affine line. Assume without the loss of generality that $p=1$.

\item Preform Step 1 on this set $F'$ and produce a generic $x \in P_1 (F')$ for the dimension conservation formula \eqref{DC} with respect to the natural self similar measure on $F'$.

\item Define $M_n$ as in \eqref{The set Mn} in Step 2 for this $F'$. Then we have:
\begin{Claim} 
Let $M\subseteq Q$ be an accumulation point of the sequence $M_n$ with respect to the Hausdorff metric. Then  $M \subseteq \lambda^{-b} F +t$,  for $b\in [1,m] \cap \mathbb{N}$, where $m$ depends only on $F$. and some $t\in B(0,\lambda^{-b})$. 
\end{Claim}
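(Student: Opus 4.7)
The plan is to mimic the proof of Claim \ref{Claim homogneous} in Step 3, accounting for the fact that the orthogonal parts of cylinders of $\Phi$ now range over the (nontrivial, but finite) group $G_\Phi$. The key observation is that although $F'$ is a self-homothetic proper subset of $F$, we still have $g^n(F')\subseteq g^n(F)\subseteq F$, so the SSC of the \emph{original} IFS $\Phi$ (rather than of the IFS generating $F'$) is the right tool for isolating a cylinder that contains $g^n(x,y)$.

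Concretely, I would set $\rho:=\min_{i\neq j}d(\phi_i(F),\phi_j(F))>0$ and pick $p,q\in\mathbb{N}$ with $\lambda^p<\rho$ and $\lambda^{-q}>\sqrt{2}$, exactly as in Claim \ref{Claim homogneous}. Letting $\phi_I$ be the unique cylinder of $\Phi$ of generation $k(n)-p-q$ containing $g^n(x,y)$, the same distance computation shows that every other cylinder of this generation lies at distance at least $\rho\lambda^{k(n)-p-q}>\lambda^{k(n)-q}$ from $g^n(x,y)$, so after rescaling by $\lambda^{-k(n)}$ only $\phi_I$ contributes to $M_n\cap Q$. Hence $M_n\subseteq \lambda^{-k(n)}(\phi_I(F)-g^n(x,y))$. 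Writing $\phi_I(z)=\lambda^{k(n)-p-q}O_I z+t_I$ with $O_I\in G_\Phi$, this becomes $M_n\subseteq \lambda^{-p-q}O_I(F)+\tau_n$ for some $\tau_n\in\mathbb{R}^2$.

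The new ingredient is converting $O_I(F)$ back into a scaled translate of $F$ itself. Since $|G_\Phi|<\infty$, for each $O\in G_\Phi$ one can fix a multi-index $J(O)$ such that $\phi_{J(O)}$ has orthogonal part $O$, and set $m_0:=\max_{O\in G_\Phi}|J(O)|$. From $\phi_{J(O_I)}(F)\subseteq F$ one reads off $O_I(F)\subseteq \lambda^{-|J(O_I)|}F+\eta_n$ for some translation $\eta_n$. Combining this with the previous step and setting $b_n:=p+q+|J(O_I)|\in[1,m]$ with $m:=p+q+m_0$ (a quantity depending only on $F$) yields $M_n\subseteq \lambda^{-b_n}F+t_n$ for suitable $t_n$.

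Since the $b_n$ take only finitely many values, any Hausdorff limit $M$ of a subsequence of $M_n$ admits a further subsequence along which $b_n$ is a constant $b\in[1,m]\cap\mathbb{N}$; passing to the Hausdorff limit gives $M\subseteq \lambda^{-b}F+t$ for some $t$, and the estimate $t\in B(0,\lambda^{-b})$ follows from $M\subseteq Q$ and $F\subseteq[0,1]^2$ exactly as in Claim \ref{Claim homogneous}. I anticipate no essential obstacle beyond the bookkeeping required to track the orthogonal factors $O_I$; the geometric content is identical to the self-homothetic case, with the finiteness of $G_\Phi$ being exactly what is needed to absorb the extra rotations into a bounded loss of generation.
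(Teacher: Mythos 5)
Your proof follows the paper's own strategy exactly: use the SSC of the original $\Phi$ to isolate the unique cylinder $\phi_I$ of generation $k(n)-p-q$ containing $g^n(x,y)$, conclude $M_n\subseteq\lambda^{-p-q}O_I(F)+\tau_n$ with $O_I\in G_\Phi$, and then absorb the finitely many possible orthogonal factors into a bounded extra loss of generation, which gives the uniform range $[1,m]$ for $b$. The one point the paper flags explicitly that you gloss over with "exactly as in Claim \ref{Claim homogneous}" is the change of metric carried out in Step 15: after conjugating by $U$, the IFS $U^{-1}\Phi U$ is a similarity IFS with ratio $\lambda$ only with respect to the modified norm $||v||_*:=||Uv||_2$, so the cylinder-separation scaling $d(\phi_I(F),\phi_J(F))\geq\lambda^{l-1}\rho$ underlying the distance computation only holds when $\rho$ and $d$ are measured in $||\cdot||_*$, and the threshold for ejecting a point from $Q=[-1,1]^2$ must therefore be $\max_{v\in Q}||v||_*$ rather than $\sqrt{2}$, since $U$ need not be orthogonal. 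This is a one-line bookkeeping adjustment and does not affect the structure of your argument, which otherwise matches the paper's.
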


The proof is essentially the same as the proof of Claim \ref{Claim homogneous} with minor adjustments: for example (in the notation of claim \ref{Claim homogneous}), for the choice of $p$ note that $\rho$ is still positive but is possibly  different  according to this new metric, and we find $q$ so that $\lambda^{-q} > \max_{v\in Q} ||v||_*$. Also, at the very end of the proof we find that $M_n \subseteq \lambda^{-p-q} \cdot O(F) +t_n$ for $t_n \in B(0, \lambda^{-p-q})$ for $O$ that is the orthogonal part of some cylinder of $F$. As $G_\Phi$ is finite, there is some global $m\in \mathbb{N}$ such that $O$ is the orthogonal part of a cylinder of generation $1\leq n \leq m$. Thus, $O(F) \subseteq \lambda^{-n} \cdot F+t$, for some $t\in \mathbb{R}^2$, which yields this version of the Claim.

\item Preform the procedure carried out in Steps 3-13 to find a product set (related to the self homothetic $F'$) that embeds into $F$ in many ways (i.e. such that the set of embeddings has dimension at least $1$). There are minor modification that need to be made, mainly due to the fact that the distance between two $(y,0),(y',0)\in P_1 (F)$ is $||(y,0)-(y',0)||_*$ and in particular  is not necessarily equal to $|y-y'|$.   Note that $P_1 (F')$ has the WSC (we'll address this point later).

Specifically, for the arguments in step 5 (e.g. between equations \eqref{Miniset phase set} and \eqref{Another phase set}) to work we use the fact that there is some $k$ such that $\lambda^k \cdot ||U^{-1}|| \leq 1$. We then choose the generation of the cylinder we take after equation \eqref{Miniset phase set} to be $k(n_l)-n_l +k$ instead of $k(n_l)-n_l$, and take the corresponding partial approximate slice $S^{k(n_l)-n_l+k} _i$ in equation \eqref{Another phase set}. Let $y,y' \in \psi_{I^{k(n_l)-n_l+k} _i} (K)$ be such that
\begin{equation*}
\text{diam}_{||\cdot ||_2} (\psi_{I^{k(n_l)-n_l+k} _i} (K)) = |y-y'|=  ||(y,0)-(y',0)||_2.
\end{equation*}
Then
\begin{equation*}
|y-y'|=|| (y,0) - (y',0)||_2 \leq ||U^{-1}|| \cdot ||(y,0)-(y',0)||_*  \leq  ||U^{-1}||\cdot  \text{diam}_{||\cdot ||_*} (\psi_{I^{k(n_l)-n_l+k} _i} (K))
\end{equation*}
\begin{equation*}
 =  \lambda^{k(n_l)-n_l +k}\cdot ||U^{-1}|| \leq \lambda^{k(n_l)-n_l}.
\end{equation*}
Thus, $\lambda^{-k(n_l)+n_l} \cdot (\psi_{I^{k(n_l)-n_l+k} _i} (K) -x )\subseteq [-1,1]$, and the proof follows through (with the minor change  that in equation \eqref{New embedding}, in the first coordinate of the matrix we should have $\lambda^{b+k}$ instead of $\lambda^b$).

\item  Let  $X$ be the compact set in $G_2$ which is the finite union of the following sets: for our fixed $\alpha, \gamma \in (0,1]$ and $b\in \mathbb{N}\cap [1,m]$ and $k\in \mathbb{N}$ as in the previous two items,
\begin{equation*}
X_b = \lbrace g\in \mathcal{G}_2 : g(z)=Az+t, A =   \diag(\lambda^{k+b}, \alpha\lambda^{\beta+2b}), \beta \in [\gamma,1], t \in [-\lambda^b,\lambda^b]^2 - B(0,1) \rbrace
\end{equation*}

\item Assume for the moment that the product measure we got this way (that is supported on our product set from a previous item) is not supported on an affine line. Apply the inverse Theorem \ref{Theorem Inverse Theorem for product measures - finite group} (with the set $X$ as in the previous item) to obtain a measure on $F$ that has entropy dimension at least $\dim_H F' +\delta$. This forms a lower bound on the box dimension of $F$. But $F$'s box dimension is equal to $\dim_H F$ and $\dim_H F < \dim_H F' + \delta$, a contradiction.
\end{itemize}

A few remarks are in order. First, note that when moving to a subset $F' \subseteq F$ we lose the fact that an affine image $P_1 (F')$ is contained within $F'$, so Lemma \ref{WSC for projection} cannot be applied as is to see that $P_1 (F')$ has the WSC (which is crucial for the proof). However,  $\lambda^m (P_1 (F),0) + t \subset F$, and so  $\lambda^m (P_1 (F'),0)+t \subset F$. One may now apply the remark following Lemma \ref{WSC for projection} to see that $P_1 (F')$ does indeed have the WSC.

We also need to verify that our chosen product measures are not supported on affine lines. This follows essentially from the same reasons as in Step 14, with the following modification. First, we may chose $F' \subseteq F$ so that it does not sit on an affine line (since $F$ does not sit on an affine line). Thus, if $\mu'$ is the natural self similar measure (that has maximal dimension) on  $F'$, $P_1 \mu'$ cannot be atomic. 

Secondly,   by the remark following Lemma \ref{WSC for projection}, we see that the Assouad dimension $\dim_A P_1 (F) <1$ (since for every micorset of $P_1 (F)$, some affine homothetic image of it is contained within a horizontal slice of $F$). Therefore, by (\cite{FraserOrponen2015assouad}, Theorem 2.3), $P_1 (F)$ has  positive Hausdorff measure in its dimension. Thus, as in the self homothetic case, if $\mu$ is the natural self similar measure on $F$, then almost surely one has $0 < \dim \mu_{[x]}$. Therefore, if  $\dim F - \dim F'$ is small enough we can ensure (via the dimension conservation formula applied to both the self similar measures\footnote{Note that for the self similar measure $\mu$ the dimension conservation formula is valid by \cite{Falconer2014Jin}.} $\mu$ and $\mu'$) that  $\dim \mu' _{[x]}$ is generically positive. Thus, the corresponding product measures are not supported on lines, and we may apply Theorem \ref{Theorem Inverse Theorem for product measures - finite group}.

\section{Inverse Theorems for entropy} \label{Section inverse Theorems for entropy}
The objective of this section is to state the version of the inverse Theorem we are using, explain how it is related to Theorems \ref{Theorem Inverse Theorem main application}, and prove Theorem \ref{Theorem Inverse Theorem for product measures - finite group}. 

\subsection{Preliminaries}

Let us recall some of the definitions from (\cite{hochman2015self}, Section 2.2). For a measure $\mu \in P(\mathbb{R}^d)$ and $m\in \mathbb{N}$ we define
\begin{equation*}
H_m (\mu) = \frac{1}{m} H(\mu,D_m)
\end{equation*}
where $D_m$ is the generation $m$ dyadic partition of $\mathbb{R}^d$ and $H(\mu,D_m)$ is the Shannon entropy  (all these concepts were introduced in section \ref{Section entropy}). Next, let $V$ be a linear subspace of $\mathbb{R}^2$, and let $\mu \in P(\mathbb{R}^2)$. We say that a measure $\mu$ is $(V,m,\epsilon)$ saturated if
\begin{equation*}
H_m (\mu) \geq \dim V + H_m(P_{V^\perp} (\mu)) - \epsilon.
\end{equation*}
Also, for $A\subseteq \mathbb{R}^d$ and $\epsilon>0$, denote the $\epsilon$-neighbourhood of $A$ by $A_{(\epsilon)} = \lbrace x \in \mathbb{R}^d : d(x,A) < \epsilon\rbrace$.
Let $V \leq  \mathbb{R}^d$ be a linear subspace and $\epsilon > 0$. A measure $\mu$ is
$(V, \epsilon)$-concentrated if there is a translate $W$ of $V$ such that $\mu(W_{(\epsilon)}) \geq 1 - \epsilon$.

Let $x\in \mathbb{R}^d$. Let $D_n (x) \in D_n$   denote the unique level-n dyadic cell containing the point $x$. For $D \in D_n$ let $T_D : \mathbb{R}^d \rightarrow \mathbb{R}^d$ be the unique homothety mapping $D$ to $[0, 1)^d$. Recall that if $\mu \in  P(\mathbb{R}^d)$ then $T_D \mu$ is the push-forward of $\mu$ through $T_D$.

Now, for $\mu \in  P(\mathbb{R}^d)$) and a dyadic cell $D$ with $\mu (D) > 0$, the (raw)
D-component of $\mu$ is
\begin{equation*}
\mu_D = \frac{1}{\mu (D)} \mu|_D
\end{equation*}
and the (rescaled) D-component is
\begin{equation*}
\mu^D = \frac{1}{\mu (D)} T_D \mu|_D
\end{equation*}
For $x\in \mathbb{R}^d$ with $\mu (D(x)) > 0$ we write
\begin{equation*}
\mu_{x,n} = \mu_{D_n (x)}
\end{equation*}
\begin{equation*}
\mu^{x,n} = \mu^{D_n (x)}
\end{equation*}
These measures, as $x$ ranges over all possible values for which $\mu (D(x)) > 0$,
are called the level-$n$ components of $\mu$.

A random level-$n$ component, raw or rescaled, is the random measure $\mu_D$
or $\mu^D$, respectively, obtained by choosing $D \in D_n$ with probability $\mu(D)$;
equivalently, this is the random measure $\mu_{x,n}$ or $\mu^{x,n}$ with $x$ chosen
according to $\mu$.  For a finite set $I \subseteq \mathbb{N}$, a random level-$I$ component, raw or rescaled, is chosen by first choosing $n \in I$ uniformly, and then (conditionally independently on the choice of n) choosing a raw or rescaled level-n component,
respectively.

When the symbols $\mu^{x,i}, \mu_{x,i}$ appear inside an expression $\mathbb{P}$, they will always denote random variables drawn according to the component distributions defined above. The range of i will be specified as needed.

\begin{Lemma} \label{Lemma entropy of component measures}
Let $\mu,\nu \in P([0,1])$. Then the following statements are valid:
\begin{enumerate}
\item Any component (raw or rescaled) measure of $\mu \times \nu$ is a product of components of $\mu$ and $\nu$, respectively.

\item For any $m\in \mathbb{N}$,
\begin{equation*}
H_m (\mu \times \nu) = H_m (\mu) + H_m (\nu).
\end{equation*}

\item Let $V\in G(2,1)$ and let $m\in \mathbb{N}$. Then one of the following holds, with an error term independent of $V$:
\begin{equation*}
H_m(P_{V^\perp} (\mu \times \nu)) \geq H_m (\mu) + O(\frac{1}{m})
\end{equation*}
or
\begin{equation*}
H_m(P_{V^\perp} (\mu \times \nu)) \geq H_m (\nu) + O(\frac{1}{m})
\end{equation*}
\end{enumerate}
\end{Lemma}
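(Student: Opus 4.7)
Parts (1) and (2) are formal consequences of the product structure of both the measure and the dyadic partition. For part (1) I would observe that each $D\in D_m^2$ decomposes uniquely as $D=D_1\times D_2$ with $D_i\in D_m$, and that the rescaling homothety satisfies $T_D=T_{D_1}\times T_{D_2}$. Since $(\mu\times\nu)|_{D_1\times D_2}=\mu|_{D_1}\times\nu|_{D_2}$ and $(\mu\times\nu)(D)=\mu(D_1)\nu(D_2)$, substitution into the definitions of $\mu_D$ and $\mu^D$ yields $(\mu\times\nu)_D=\mu_{D_1}\times\nu_{D_2}$ and $(\mu\times\nu)^D=\mu^{D_1}\times\nu^{D_2}$. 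Part (2) then follows from the standard additivity of Shannon entropy on product partitions, $H(\mu\times\nu,D_m^2)=H(\mu,D_m)+H(\nu,D_m)$, after dividing by $m$.

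For part (3) I would parametrize $V^\perp$ by a unit vector $(\cos\theta,\sin\theta)$, so that after identifying $V^\perp$ isometrically with $\mathbb{R}$ the projection becomes $P_{V^\perp}(x,y)=x\cos\theta+y\sin\theta$. Since $\cos^2\theta+\sin^2\theta=1$, at least one of $|\cos\theta|,|\sin\theta|$ is $\geq 1/\sqrt{2}$; I will treat the case $|\cos\theta|\geq 1/\sqrt{2}$, the other case being symmetric with the roles of $\mu$ and $\nu$ swapped. Writing $S_{c,t}(x)=cx+t$, Fubini's theorem gives the disintegration
\begin{equation*}
P_{V^\perp}(\mu\times\nu)=\int S_{\cos\theta,\,y\sin\theta}\,\mu\;d\nu(y).
\end{equation*}

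The main step is to combine concavity of the Shannon entropy $H(\cdot,D_m)$ with near-invariance of $H(\cdot,D_m)$ under bounded affine maps. Concavity yields
\begin{equation*}
H(P_{V^\perp}(\mu\times\nu),D_m)\;\geq\;\int H(S_{\cos\theta,\,y\sin\theta}\,\mu,\,D_m)\,d\nu(y).
\end{equation*}
For any $c$ with $|c|\in[1/\sqrt{2},1]$ and any translation $t\in\mathbb{R}$, the pulled-back partition $S_{c,t}^{-1}D_m$ has intervals of length in $[2^{-m},\sqrt{2}\cdot 2^{-m}]$, so each cell of $D_m$ meets boundedly many cells of $S_{c,t}^{-1}D_m$ and vice versa. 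The standard comparison lemma for Shannon entropy relative to two such partitions then gives $|H(S_{c,t}\mu,D_m)-H(\mu,D_m)|=O(1)$ with an absolute constant independent of $c,t$. Dividing by $m$ and integrating against $d\nu(y)$ produces
\begin{equation*}
H_m(P_{V^\perp}(\mu\times\nu))\;\geq\; H_m(\mu)+O(1/m),
\end{equation*}
which is the desired conclusion under our assumption on $\theta$.

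No serious obstacles are anticipated: the whole argument rests on elementary manipulations of Shannon entropy, and the only point meriting care is keeping the $O(1/m)$ error term uniform in $V$ (equivalently in $\theta$). This uniformity is automatic once $|\cos\theta|$ or $|\sin\theta|$ is bounded below by $1/\sqrt{2}$, which forces the cells of the comparison partition to be within a fixed constant factor of the cells of $D_m$.
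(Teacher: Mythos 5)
Your proof is correct, and for part (3) it takes a genuinely different route from the paper. The paper identifies $P_{V^\perp}(\mu\times\nu)$, for a unit vector $(a,b)$, with the \emph{convolution} $S_{\log a}\mu \ast S_{\log b}\nu$ of two rescaled copies of the factors, and then cites Lemma 6 of \cite{hochman2016some} (entropy of a convolution is at least the max of the entropies, up to $O(1/m)$), together with the rescaling identity $H(S_t\mu,D_n)=H(\mu,D_n)+O(t)+O(1)$. You instead disintegrate the projection over the second coordinate to write it as a \emph{mixture} $\int S_{\cos\theta,\,y\sin\theta}\,\mu\,d\nu(y)$ and then invoke concavity of the Shannon entropy plus the same uniform $O(1)$ stability of $H(\cdot,D_m)$ under affine maps with slope in $[1/\sqrt2,1]$. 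The two arguments are of comparable length, but yours is somewhat more self-contained: concavity of entropy is a one-line fact, whereas the paper's argument outsources the key step to the cited convolution lemma. One small cosmetic remark: you should note explicitly that the sign of the $O(1/m)$ error term is not constrained (the conclusion is an inequality with a possibly negative error), which is how both you and the paper use it; and the uniformity in $V$ that you flag at the end is indeed exactly what makes the dichotomy by which of $|\cos\theta|,|\sin\theta|$ is $\geq 1/\sqrt2$ harmless.
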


\begin{proof}
Items (1) and (2) follow by a direct calculation, using the fact that  the dyadic partition of $\mathbb{R}^2$ is the product of the dyadic partitions of $\mathbb{R}$. We omit the details.

Next, let $t<0$ and $\mu \in P(\mathbb{R})$ and let $n\in \mathbb{N}$. Define the map $S_t :\mathbb{R} \rightarrow \mathbb{R}$ by $S_t (x) = 2^t x$. We first claim that
\begin{equation} \label{Equation St}
H(S_t \mu, D_n) = H(\mu, D_n) + O(t) + O(1).
\end{equation}
Indeed, by equation (6) in \cite{hochman2016some} we have
\begin{equation*}
H(S_t \mu, D_n) = H(\mu, D_{n+t}) + O(1)
\end{equation*}
where the $t$ on the right hand side is understood as $\lceil t \rceil$. Now,
\begin{equation*}
H(\mu, D_{n+t}) + O(1) = H(\mu,D_n \vee D_{n+t}) - H(\mu,D_n | D_{n+t}) + O(1) 
\end{equation*}
\begin{equation*}
=H(\mu, D_n) - O(t) + O(1)
\end{equation*}
where we use the facts that $t<0$ so $D_n$ refines $D_{n+t}$, and every $D\in D_{n+t}$ intersects  $2^{|t|}$ elements of $D_n$. 

Turning our attention to part (3), let $(a,b)\in S^1\cap V$, so $||(a,b)||=1$. Thus, 
\begin{equation*}
H_m(P_{V^\perp} (\mu \times \nu)) = H_m (S_{\log a} \mu \ast S_{\log b} \nu) + O(\frac{1}{m})
\end{equation*}
where we change coordinates to get a measure supported on $\mathbb{R}$, which gives rise to the term $O(\frac{1}{m})$. Now, according to Lemma 6 of \cite{hochman2016some} we have
\begin{equation*}
H_m (S_{\log a} \mu \ast S_{\log b} \nu) \geq \max \lbrace H_m (S_{\log a} \mu) , H_m (S_{\log b} \nu) \rbrace - O(\frac{1}{m}).
\end{equation*} 

Now, $||(a,b)||=1$ hence either $a \geq \frac{1}{\sqrt{2}}$ or $b \geq \frac{1}{\sqrt{2}}$. Suppose $a \geq \frac{1}{\sqrt{2}}$, so $| \log a | \leq \frac{1}{2}$ since we also have $a\leq 1$. Then by equation \eqref{Equation St}
\begin{equation*}
H_m (S_{\log a} \mu) = H_m(\mu) + \frac{O(|\log a|)}{m} + O(\frac{1}{m}) = H_m(\mu) + \frac{O(\frac{1}{2})}{m} + O(\frac{1}{m}) = H_m(\mu) + O(\frac{1}{m}).
\end{equation*}
If $b \geq \frac{1}{\sqrt{2}}$ we similarly obtain
\begin{equation*}
H_m (S_{\log b} \nu) = H_m(\nu) + O(\frac{1}{m}),
\end{equation*}
as required.
\end{proof}

\subsubsection{Furstenberg Galleries}
We recall the definition of a gallery in the sense of Furstenberg, which we already defined after Lemma \ref{WSC for projection}. They shall play a role in our analysis, since we shall be looking at component measures of measures, and these have supports that are mini-sets of the supports of the original measure.

Recall that if $K\subseteq [0,1]^2$ is a compact set, we say that a set  $A \subseteq [-1,1]^2$ is  a  miniset of $K$ if $A \subseteq (\gamma \cdot K + t)\cap [-1,1]^2$ for some $\gamma \geq 1, t\in \mathbb{R}$. A set $M$ is called a  microset of $K$ if $M$ is a limit in the  Hausdorff metric on subsets of $[-1,1]^2$ of minisets of $K$.   The set $K$ is called Furstenberg homogeneous if every microset of $K$ is a miniset of $K$. A family of compact sets $G$ of $[-1,1]^2$ is called a gallery if it is closed in the Hausdorff metric and for every $K\in G$, every mini-set of $K$ is in $G$.

Let $G$ be a gallery. For a set $A\in G$ and $m\in \mathbb{N}$  let 
\begin{equation*}
N_m (A) = \lbrace D \in D_m : A \cap D_m \neq \emptyset \rbrace
\end{equation*}
where $D_m$ is  the generation $m$ dyadic partition of $[-1,1]^2$. Next, let
\begin{equation*}
N_m (G) = \max_{A \in G} N_m (A).
\end{equation*}
Now, Recall from \cite{furstenberg2008ergodic} that the limit
\begin{equation} \label{Equation dimension of gallery}
\Delta (G) =  \lim_{m\rightarrow \infty} \frac{\log N_m (G)}{m} 
\end{equation}
exists and bounds from above the upper box dimension of any set in $G$. $\Delta (G)$ is called the dimension of the gallery $G$. By (\cite{furstenberg2008ergodic}, Section 5)  there exists a set $A \in G$ such that $\dim_H A \geq \Delta (G)$, and so $\dim_H A = \Delta (G)$. 

Let us now give a few examples of galleries, that shall be used in the subsequent sections:
\begin{enumerate}
\item Let $F$ be a self homothetic set with the SSC. Then $F$ is Furstenberg homogeneous.

\item More generally, let $F$ be a self similar set generated by an IFS $\Phi$ with $|G_\Phi|<\infty$ and a uniform contraction ratio. Then $F$ is Furstenberg homogeneous.

\item \label{P1(F) is gallery} Let $F$ be a self homothetic set with the SSC and uniform contraction ratio. Suppose that $\gamma\cdot (P_1 (F),0)+t \subseteq F$ for some $\gamma > 0 , t\in \mathbb{R}^2$.  Then $P_1 (F)$ is Furstenberg homogeneous, by Lemma \ref{WSC for projection}. In particular,
\begin{equation*}
G_{P_1 (F)} = \lbrace A: A \text{ is a miniset of }  P_1 (F)  \rbrace
\end{equation*}
is a gallery with dimension $\Delta (G_{P_1 (F)}) = \dim P_1 (F)$.

\item \label{Gallrey of vertical slices} Let $F$ be a self homothetic set with the SSC and a uniform contraction ratio. Then  the family 
\begin{equation*}
G_{F,e_2} = \lbrace A: A \text{ is a miniset of }  F \cap P_1 ^{-1} (x) \text{ for some } x\in \mathbb{R} \rbrace
\end{equation*}
is known to be a gallery, where $e_2 = (0,1)$. Moreover,  $\Delta (G_{F,e_1}) = \dim_H A$ for some $A\in G_{F,e_2}$, and an affine image of $A$ is contained within a vertical slice of $F$ (by the definition of the gallery). Therefore, by Corollary \ref{Corollary SSC implies dimension gap for slices}, $\Delta (G_{F,e_2}) = \dim A <1$. 

\item \label{Gallery of slices - general} More generally, let $F$ be a self similar set generated by an IFS $\Phi$  the SSC, uniform contraction ratio $\lambda$, and $|G_\Phi|<\infty$. Let $V = \text{span}(v) \in G(2,1)$. Then
\begin{equation*}
G_{F,v} = \lbrace A: A \text{ is a miniset of }  F\cap (V+t): t\in \mathbb{R}^2 \rbrace
\end{equation*}
is known to be a gallery. Moreover, as in the previous example, since the dimension of the gallery is attained by an element of the gallery, and this elements embeds as a subset of a slice of $F$, we see by Corollary \ref{Corollary SSC implies dimension gap for slices} that $\Delta( G_{F,v}) < 1$.
\end{enumerate}

\subsection{A version of the inverse Theorem for entropy}
We first quote one of the inverse Theorems proved by Hochman in \cite{hochman2015self}. 
\begin{theorem} (\cite{hochman2015self}, Corollary 2.15) \label{Theorem Inverse Theorem} Let $G < GL(\mathbb{R}^2)$ be any subgroup,  and Let $\nu \in P(G)$ and $\mu \in  P(\mathbb{R}^d)$ be measures of bounded support and suppose $\mu$ is not supported on a proper affine subspace. Then for every $\epsilon > 0$ and $m\in \mathbb{N}$ there is a $\delta > 0$ (depending on $\epsilon$, $m$ and supp $\nu$), such that for every large enough $k$ and every large enough $n$, either
\begin{equation} \label{Equation increase of entropy}
H_n(\nu .\mu ) > H_n(\mu) + \delta,
\end{equation}
or else, for an independently chosen pair of level-$k$ components $\widetilde{\mu}, \widetilde{\nu}$ of $\mu,\nu$, respectively, with probability $> 1-\epsilon$, there are subspaces $V_1, . . . , V_n < \mathbb{R}^d$ such that
\begin{equation}
\mathbb{P}_{0 \leq i \leq n} ( \widetilde{\mu}^{x,i} \text{is } (V_i,m,\epsilon) \text{-saturated }, \widetilde{\nu}^{y,i} .x \text{ is } (V_i,\epsilon) \text{concentrated} )>1-\epsilon.
\end{equation}
If in addition $\mu$ is not supported on a proper affine subspace, for $k, n$ large
enough,
\begin{equation*}
\frac{1}{n+1} \sum_{i=0} ^n \dim V_i > c H_n (\nu)
\end{equation*}
For a suitable constant $c$ (depending only on $d=2$ and the support of $\nu$).
\end{theorem}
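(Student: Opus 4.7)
The plan is to prove this via a multiscale analysis of entropy along dyadic components, closely mirroring the strategy of Hochman's one-dimensional inverse theorem but taking care of the noncommutative action of $GL(\mathbb{R}^2)$. First, I would reformulate the statement as a contrapositive quantitative rigidity statement: assume the failure of the saturation/concentration conclusion, and deduce the entropy gain \eqref{Equation increase of entropy}. Concretely, for any $\epsilon,m$ the assumed failure means that for a set of dyadic scales $i$ of positive density (along a positive-probability set of component pairs $(\widetilde{\mu}^{x,i},\widetilde{\nu}^{y,i})$), either $\widetilde{\mu}^{x,i}$ is \emph{not} $(V,m,\epsilon)$-saturated for any $V\leq\mathbb{R}^d$, or the pushforward $\widetilde{\nu}^{y,i}.x$ is not $(V,\epsilon)$-concentrated. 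The target is then to show each such scale contributes a definite positive amount to $H_n(\nu.\mu)-H_n(\mu)$.

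The key technical input I would rely on is the telescoping identity (up to $O(1/n)$ errors)
\begin{equation*}
H_n(\nu.\mu) \;=\; \mathbb{E}_{0\le i\le n}\bigl[\,H_m(\widetilde{\nu}^{y,i}.\widetilde{\mu}^{x,i})\,\bigr] \;+\; O\!\left(\tfrac{m}{n}\right),
\end{equation*}
and the analogous formula for $H_n(\mu)$ in terms of $H_m(\widetilde{\mu}^{x,i})$. Thus it suffices to show that for the ``bad'' components identified above, one has
\begin{equation*}
H_m(\widetilde{\nu}^{y,i}.\widetilde{\mu}^{x,i}) \;\ge\; H_m(\widetilde{\mu}^{x,i}) + \delta'
\end{equation*}
with $\delta'=\delta'(\epsilon,m)>0$. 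This is a local, small-scale entropy gain statement at a fixed intermediate scale $m$, and the plan is to establish it via a Berry--Esseen / smoothing argument: the linear action $g\mapsto gx+t$ of $G$ on a scale-$2^{-i}$ ball is well-approximated affinely, and the convolution on that ball behaves, after rescaling $T_{D_i(x)}$, as a Euclidean convolution on $[0,1)^d$ by the pushed-forward measure of $\widetilde{\nu}^{y,i}$ under the orbit map $g\mapsto gx$. One then invokes the fully Euclidean inverse theorem for additive convolutions (Theorem~2.8 in Hochman's paper, the $d$-dimensional analogue): saturation failure of $\widetilde{\mu}^{x,i}$ together with lack of concentration of $\widetilde{\nu}^{y,i}.x$ in any common direction is exactly the additive hypothesis under which that theorem gives a definite entropy increment.

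From here I would assemble the estimates. Because the ``bad'' event has probability $\geq \epsilon$ in the joint choice of component pair, and yields a uniform increment $\delta'$, the telescoping formulas give $H_n(\nu.\mu)\geq H_n(\mu)+\epsilon\delta'/2 - O(m/n)$, establishing \eqref{Equation increase of entropy} for $n$ large with $\delta := \epsilon\delta'/4$. For the final lower-bound assertion $\frac{1}{n+1}\sum\dim V_i > cH_n(\nu)$, I would argue as follows. The saturation of $\widetilde{\mu}^{x,i}$ in $V_i$ combined with the concentration of $\widetilde{\nu}^{y,i}.x$ in a translate of $V_i$ forces the ``direction spread'' of the support of $\nu$ at scale $i$ to lie predominantly along $V_i$-orbits; since $\mu$ does not lie on a proper affine subspace, orbits generating $H_n(\nu)$ worth of entropy must point into at least $cH_n(\nu)$ independent dimensions on average, giving the bound. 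The main obstacle, in my view, will be the last step: converting the \emph{local}, componentwise concentration statement into a \emph{global} dimension lower bound for $\frac{1}{n+1}\sum\dim V_i$ uniformly over the support of $\nu$ — this is where the subtlety of working in a noncommutative group (and not $\mathbb{R}^d$) becomes unavoidable, and requires controlling how the $V_i$ can vary with $i$, probably via a covering argument on supp $\nu$ and a pigeonhole on possible orbit directions.
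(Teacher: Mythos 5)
The paper does not actually prove this theorem: it is quoted verbatim as Corollary~2.15 of \cite{hochman2015self}, so there is no proof in the paper to compare your attempt against. What follows is an assessment of the proposal on its own terms as a sketch of Hochman's argument.

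Your high-level plan --- pass to components, telescope down to an intermediate scale $m$, linearize the group action inside a small dyadic cell, invoke the Euclidean $d$-dimensional inverse theorem, then re-assemble --- is indeed the right skeleton for this class of statements. But there are two points where the proposal, as written, does not hold up. First, the ``telescoping identity''
\begin{equation*}
H_n(\nu.\mu) \;=\; \mathbb{E}_{0\le i\le n}\bigl[\,H_m(\widetilde{\nu}^{y,i}.\widetilde{\mu}^{x,i})\,\bigr] + O\!\left(\tfrac{m}{n}\right)
\end{equation*}
is not an identity. The exact component formula $H_n(\theta)=\mathbb{E}_{0\le i\le n}[H_m(\theta^{x,i})]+O(m/n)$ holds for a \emph{single} measure $\theta$, but level-$i$ components of the convolution $\nu.\mu$ are not of the form $\widetilde{\nu}^{y,i}.\widetilde{\mu}^{x,i}$; the link between the two must go through concavity of (conditional) entropy plus a linearization error, and it only yields the one-sided bound $H_n(\nu.\mu)\ge\mathbb{E}_{0\le i\le n}[H_m(\widetilde{\nu}^{y,i}.\widetilde{\mu}^{x,i})]-O(m/n)-o_m(1)$. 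Fortunately that is the direction you need, but writing it as an equality signals a misconception: it is precisely because the relation is only an inequality that the proof must also show the complementary fact that convolving at scale $m$ never \emph{loses} a macroscopic amount of entropy on the ``good'' (saturated/concentrated) component pairs, a step you omit.

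Second, the dimension lower bound $\tfrac{1}{n+1}\sum\dim V_i > c\,H_n(\nu)$, which you rightly identify as the hardest part, is not yet an argument in your sketch. The ``direction spread'' heuristic has to be made quantitative: the point is that $(V_i,\epsilon)$-concentration of the pushforward $\widetilde{\nu}^{y,i}.x$ for $\widetilde{\mu}^{x,i}$-typical $x$ (and $\mu$ not lying on an affine line, so these $x$ span) constrains the \emph{derivative of the orbit map} $g\mapsto g(x)$ on the support of $\widetilde{\nu}^{y,i}$, which in turn bounds the scale-$i$ entropy increment of $\nu$ in terms of $\dim V_i$; summing and using the component formula for $H_n(\nu)$ closes the loop. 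This is a genuine covering/linear-algebra step in $GL(\mathbb{R}^2)$, and without it the claimed constant $c$ (depending on $\mathrm{supp}\,\nu$) has no source. So: the strategy is correctly identified, but the proposal as written has a wrong sign in a key identity and no actual argument for the final inequality.
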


\textbf{Remarks}
\begin{enumerate}
\item The Theorem remains valid when taking measures supported on subgroups of the affine group $G_2$ rather then the linear group $GL(\mathbb{R}^2)$ (this follows by the same argument proving Theorem \ref{Theorem Inverse Theorem} in \cite{hochman2015self}).

\item The constant $\delta$ above is uniform in the following sense: if we prescribe a compact set $X \subset G_2$ in advance, then we may produce $\delta$ that works for all measures $\nu$ supported on $X$ (when $\epsilon$ and $m$ are of course the same).

\item Theorem \ref{Theorem Inverse Theorem main application} is a consequence of Theorem \ref{Theorem Inverse Theorem}. This is essentially the proof of Theorem 1.5 in \cite{hochman2015self}, as appearing in section 6 there.  We refer the reader to \cite{hochman2015self} for more details.
\end{enumerate}

\subsection{Proof of Theorem \ref{Theorem Inverse Theorem for product measures - finite group}}

\subsubsection{A preliminary version of Theorem \ref{Theorem Inverse Theorem for product measures - finite group}}
We begin by proving a more restrictive version of Theorem \ref{Theorem Inverse Theorem for product measures - finite group}, that works for self homothetic sets. Its proof contains essentially all the key ingredients of the proof of the more general Theorem \ref{Theorem Inverse Theorem for product measures - finite group}. In fact, this version suffices for the proof of part (2) of Theorem \ref{Theorem embeddings} for self homothetic sets (it may be applied at Step 14 of Section \ref{Section part 3}).

 Recall the definition of the set of product measures $\text{Pro}(F)$ defined in \eqref{Equation for Pro(F)}.

\begin{theorem} \label{Theorem Inverse Theorem for product measures - self homo}
Let $F$ be a self similar set generated by an IFS $\Phi$ with a uniform contraction ratio $\lambda$, $G_\Phi=\lbrace \id \rbrace$ and the SSC. Assume $\gamma\cdot (P_1 (F),0)+t \subset F$ for some $\gamma >  0,t\in \mathbb{R}^2$. Let $\theta \in \text{Pro}(F)$ be such that it is not supported on any affine line, and let $\nu \in P(G_2)$ be a compactly supported measure  with $\overline{\dim}_e (\nu) \geq 1$.

Then there exist $\delta(F,\theta, \text{supp}(\nu))>0$ such that  
\begin{equation*}
\overline{\dim}_e (\nu. \theta) \geq \overline{\dim}_e (\theta) +\delta.
\end{equation*} 
\end{theorem}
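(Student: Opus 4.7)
I would prove this by applying Hochman's inverse theorem (Theorem \ref{Theorem Inverse Theorem}) to $\nu$ and $\theta$ and ruling out its saturation alternative via the product structure of $\theta$. Write $\theta=\mu_1\times\mu_2$ with $\mu_1=P_1\mu$ and $\mu_2=P_2(\mu_{[x]}|_S)$. The measure $\mu_1$ is a self similar measure on $P_1(F)$ (with respect to $P_1\Phi$), hence exact dimensional; $\mu_2$ is exact dimensional because $x\in\text{DC}(F,P_1)$ and $P_2$ acts as an isometry on the vertical line $\{x\}\times\mathbb{R}$. Thus $\theta$ is exact dimensional, so $H_n(\theta)\to\overline{\dim}_e(\theta)$ as $n\to\infty$, and we need not argue along subsequences on the $\theta$-side.

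The key input is a uniform entropy bound on rescaled components of each factor. Every support of a rescaled component of $\mu_1$ sits in a miniset of $P_1(F)$. Since $\gamma(P_1(F),0)+t\subseteq F$, Lemma \ref{WSC for projection} gives the WSC for $P_1\Phi$, and Corollary \ref{Corollary SSC implies dimension gap for slices} gives $s_1:=\dim P_1(F)<1$; the gallery from example (\ref{P1(F) is gallery}) then yields $H_m(\mu_1^{z,k})\le s_1+\epsilon$ uniformly for $m$ large. After identifying the vertical line $\{x\}\times\mathbb{R}$ with $\mathbb{R}$ via $P_2$, rescaled components of $\mu_2$ are supported inside the gallery $G_{F,e_2}$ of example (\ref{Gallrey of vertical slices}), whose dimension $s_2:=\Delta(G_{F,e_2})$ is strictly less than $1$ by Corollary \ref{Corollary SSC implies dimension gap for slices}; hence $H_m(\mu_2^{z,k})\le s_2+\epsilon$ uniformly for $m$ large. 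Fix $\epsilon>0$ so small that $s_1+s_2+3\epsilon<2$, $s_i+2\epsilon<1$ for $i=1,2$, and $2\epsilon<c(1-\epsilon)$, where $c$ is the constant from the addendum of Theorem \ref{Theorem Inverse Theorem}; pick $m$ large enough so the above uniform bounds hold.

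Apply Theorem \ref{Theorem Inverse Theorem} to $\theta$ and $\nu$ at scale $n$, yielding the dichotomy: either $H_n(\nu.\theta)>H_n(\theta)+\delta$, or the saturation alternative. By Lemma \ref{Lemma entropy of component measures}(1)--(2), every rescaled component factors as $\theta^{x,i}=\mu_1^{x_1,i}\times\mu_2^{x_2,i}$ with $H_m(\theta^{x,i})\le s_1+s_2+2\epsilon<2$, which is incompatible with $(V_i,m,\epsilon)$-saturation when $\dim V_i=2$. When $\dim V_i=1$, Lemma \ref{Lemma entropy of component measures}(3) gives $H_m(P_{V_i^\perp}(\theta^{x,i}))\ge H_m(\mu_j^{x_j,i})-O(1/m)$ for some $j\in\{1,2\}$; substituting into the saturation inequality and using additivity forces $H_m(\mu_{3-j}^{x_{3-j},i})\ge 1-\epsilon-O(1/m)>s_{3-j}+\epsilon$, contradicting the component bound. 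Hence saturation with $\dim V_i\ge 1$ is impossible, so the proportion of $i$ with $V_i\neq 0$ is at most $\epsilon$ and $\frac{1}{n+1}\sum_i\dim V_i\le 2\epsilon$. Since $\theta$ is not supported on an affine line, the addendum of Theorem \ref{Theorem Inverse Theorem} forces $2\epsilon\ge cH_n(\nu)$, which fails whenever $H_n(\nu)\ge 1-\epsilon$. Because $\overline{\dim}_e(\nu)\ge 1$, there are infinitely many such $n$; along this subset the saturation alternative fails, so $H_n(\nu.\theta)>H_n(\theta)+\delta$, and taking $\limsup$ together with $H_n(\theta)\to\overline{\dim}_e(\theta)$ yields the desired inequality. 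The main obstacle in the plan is establishing the uniform component entropy bound for $\mu_2$, which requires that every miniset of a vertical slice of $F$ has box dimension uniformly bounded away from $1$: this is exactly what the SSC assumption delivers via Corollary \ref{Corollary SSC implies dimension gap for slices} and the gallery formalism.
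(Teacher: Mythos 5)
Your proposal is correct and takes essentially the same route as the paper's own proof: both invoke Hochman's Theorem \ref{Theorem Inverse Theorem}, factor rescaled components of $\theta$ via Lemma \ref{Lemma entropy of component measures}, and rule out the saturation alternative by comparison with the Furstenberg gallery dimensions $\Delta(G_{P_1(F)})$ and $\Delta(G_{F,e_2})$, both of which are below $1$ by Lemma \ref{WSC for projection} and Corollary \ref{Corollary SSC implies dimension gap for slices}. The only real organizational difference is that you fix $m$ in advance using uniform component-entropy bounds coming from the gallery limit and then argue along the subsequence of $n$ with $H_n(\nu)\geq 1-\epsilon$ (which you correctly justify by showing $\theta$ is exact dimensional), whereas the paper assumes the entropy-gain alternative fails for every $m$, funnels this through its Lemma \ref{Lemma four options}, and derives the contradiction by letting $m\to\infty$ against $\Delta(G)$; both variants are sound and rest on the same ingredients.
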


\begin{proof}
Let $\mu$ be the natural self similar measure on $F$, and let $\lbrace \mu_{[x]} \rbrace$ denote its disintegration with respect to $P_1$. Then $\theta \in \text{Pro}(F)$ implies that for some $x\in DC(F,P_1)$ (recall \eqref{Equation for DC(X,P)}) there is a compact set $S\subseteq F^x$ with $\mu_{[x]} (S)>0$ such that $\theta= P_1 \mu \times P_2 (\mu_{[x]}|_{S})$. We also assume $\theta$ is not supported on any affine line. Moreover, since $\gamma\cdot (P_1 (F),0)+t \subseteq F$ and $F$ is Furstenberg homogeneous, by Lemma \ref{WSC for projection}, the self similar set $P_1 (F)$ is homogeneous in the sense of Furstenberg. In particular, the family  $G_{P_1 (F)}$ of all microsets of $P_1 (F)$ forms a gallery in the sense of Furstenberg (see Example \ref{P1(F) is gallery}). We may assume without the loss of generality that $S= \text{supp}(\mu_{[x]})$, so we omit  $S$ from $\theta$ in our notation.

Recall that the family 
\begin{equation*}
G_{F,e_2} = \lbrace A: A \text{ is a miniset of }  F \cap P_1 ^{-1} (x) \text{ for some } x\in \mathbb{R} \rbrace
\end{equation*}
also forms a gallery in the sense of Furstenberg, and $\Delta (G_{F,e_2})  <1$ (see Example \ref{Gallrey of vertical slices}). Also, since $P_1 (F)$ is Furstenberg homogeneous, $ \Delta (G_{P_1 (F)}) = \dim P_1 (F) <1$, where the latter assertion follows since an affine homothetic image of $P_1 (F)$ is contained within a horizontal slice of $F$, and using Corollary \ref{Corollary SSC implies dimension gap for slices}.

Now, let $\epsilon$ be small enough so that 
\begin{equation} \label{Equation choice of epsilon}
\max \lbrace \Delta (G_{F,e_2}) , \Delta (G_{P_1 (F)}) \rbrace < 1- \epsilon. 
\end{equation}
Let $\nu \in P(G_2)$ be a compactly supported measure with $\overline{\dim}_e (\nu) \geq 1$, and let $X$ denote the support of $\nu$. Applying Theorem \ref{Theorem Inverse Theorem}, for any $m \in \mathbb{N}$ we may produce $\delta(\epsilon,m,X)$ such that either equation \eqref{Equation increase of entropy} is satisfied, or the other option stated in Theorem \ref{Theorem Inverse Theorem} is satisfied. It suffices to find some $m\in \mathbb{N}$ such that \eqref{Equation increase of entropy} is satisfied for our product measure $\theta = P_1 \mu \times P_2 ( \mu_{[x]})$. 

Suppose towards a contradiction that for every $m\in \mathbb{N}$  equation \eqref{Equation increase of entropy} fails. Fix some  $m \in \mathbb{N}$. Then for all large enough $k$ and large $n$, we may find a linear subspace $V_m$ such that the rescaled component of a raw component of our product measure satisfies that
\begin{equation} \label{Equation failure}
\left( (P_1 \mu \times P_2 (\mu_{[x]}) )_{e,k} \right) ^{y,i}
\end{equation}
is $(\epsilon, m)$-saturated on $V_{m}$, where $1\leq i \leq n$ for some large $n$ and $\dim V_{m} >0$ (here we use the constant $c$ from Theorem \ref{Theorem Inverse Theorem}, and that $\nu$ has positive upper entropy dimension). Recall that the numbers $N_M (A)$, for $A$ in a gallery, were defined right before equation \eqref{Equation dimension of gallery}.

\begin{Lemma} \label{Lemma four options}
One of the following options must occur:
\begin{enumerate}
\item $\max_{A\in G_{F,e_2}} \log N_m (A)  \geq m\cdot (1 - O(\frac{1}{m})  - \epsilon)$

\item $\max_{A\in G_{P_1 (F)}} \log N_m (A)  \geq m\cdot (1 - O(\frac{1}{m})  - \epsilon)$. 

\item $\max_{A\in G_{F,e_2}} \log N_m (A)   \geq m\cdot (1 - \frac{\epsilon}{2})$

\item $\max_{A\in G_{P_1 (F)}} \log N_m (A)   \geq m\cdot (1 - \frac{\epsilon}{2})$ 
\end{enumerate}
where the error term $O(\frac{1}{m})$ is the same term in both (1) and (2)
\end{Lemma}

\begin{proof}
Let $V_m$ be the subspace of equation \eqref{Equation failure}. Then either $\dim V_{m}=1$  or $V_{m} = \mathbb{R}^2$. We shall see that the first first case corresponds to the first two cases above, and that the latter case corresponds to options (3) and (4) above.

Recall that we fixed $m\in \mathbb{N}$. Suppose first that $\dim V_{m}=1$. Recall (from Lemma \ref{Lemma entropy of component measures}) that a component measure of a product of component measures is a product of  component measures of its factors, so the measure from equation \eqref{Equation failure} satisfies
\begin{equation*}
\left( (P_1 \mu \times P_2 (\mu_{[x]}) )_{e,k} \right) ^{y,i} =  (P_1 \mu)^{x_1,k+i} \times (P_2 (\mu_{[x]}))^{x_2, k+i}, \text{ for some } x_1,x_2.
\end{equation*}
Also, recall that the entropy of a product measure is the sum of the entropies of each factor (again see Lemma \ref{Lemma entropy of component measures}).   Thus, we have by our assumption that this component measure is $(V_m , m, \epsilon)$-saturated on $V_m$,
\begin{equation*}
H_m ( (P_1 \mu)^{x_1,k+i} ) + H_m ((P_2 (\mu_{[x]}))^{x_2, k+i}) = H_m ( \left( (P_1 \mu \times P_2 (\mu_{[x]}) )_{e,k} \right) ^{y,i} )  
\end{equation*}
\begin{equation*}
\geq 1 + H_m (P_{V_{m} ^\perp} ( \left( (P_1 \mu \times P_2 (\mu_{[x]}) )_{e,k} \right) ^{y,i} )) -\epsilon
\end{equation*}
\begin{equation} \label{Equation saturation}
= 1+ H_m (P_{V_{m} ^\perp} ( (P_1 \mu)^{x_1,k+i} \times (P_2 (\mu_{[x]}))^{x_2, k+i}) -\epsilon.
\end{equation}

Now, by Lemma \ref{Lemma entropy of component measures} part (3), we have either
\begin{equation} \label{Equation insert}
H_m (P_{V_{m} ^\perp} ( (P_1 \mu)^{x_1,k+i} \times (P_2 (\mu_{[x]}))^{x_2, k+i}) \geq H_m ( (P_1 \mu)^{x_1,k+i}) - O(\frac{1}{m})
\end{equation}
or
\begin{equation} \label{Equation insert 2}
H_m (P_{V_{m} ^\perp} ( (P_1 \mu)^{x_1,k+i} \times (P_2 (\mu_{[x]}))^{x_2, k+i}) \geq H_m ((P_2 (\mu_{[x]}))^{x_2, k+i}) - O(\frac{1}{m})
\end{equation}
where the error term $O(\frac{1}{m})$ depends only on the dimension of the ambient space ($d=1$ in our case), and in particular does not depend on the measures convolved. 

If equation \eqref{Equation insert} is satisfied, we insert it  into equation \eqref{Equation saturation} and see that 
\begin{equation*}
H_m ( (P_1 \mu)^{x_1,k+i} ) + H_m ((P_2 (\mu_{[x]}))^{x_2, k+i}) \geq 1+ H_m (\pi_{V_{m} ^\perp} ( (P_1 \mu)^{x_1,k+i} \times (P_2(\mu_{[x]}))^{x_2, k+i}) -\epsilon 
\end{equation*}
\begin{equation*}
\geq 1 + H_m ( (P_1 \mu)^{x_1,k+i}) - O(\frac{1}{m})  - \epsilon.
\end{equation*}
Thus,
\begin{equation*} 
H_m ((P_2 (\mu_{[x]}))^{x_2, k+i}) \geq 1 - O(\frac{1}{m})  - \epsilon.
\end{equation*}
Also, note that since $\mu_{[x]}$ is supported on a parallel line to the $y$-axis, we have
\begin{equation} \label{Equation equal entropy}
H_m ((P_2(\mu_{[x]}))^{x_2, k+i}) = H_m ((\mu_{[x]})^{x_2 ', k+i}),
\end{equation}
where the $x_2 '$ is understood to represent a point on $P_1 ^{-1} (x)$ on the right hand side of \eqref{Equation equal entropy}. so we have
\begin{equation} \label{Equation last calcultion}
H_m ((\mu_{[x]})^{x_2 ', k+i}) \geq 1 - O(\frac{1}{m})  - \epsilon.
\end{equation}

Now, let $A_m \in G_{F,e_2}$ be the support of the measure $(\mu_{[x]})^{x_2 ', k+i}$ (it is supported on a mini-set in this gallery by definition). Therefore, by the standard properties of entropy, and by equation \eqref{Equation last calcultion}
\begin{equation*}
\log |\lbrace D \in D_m : D\cap A_m \neq \emptyset \rbrace| \geq  H( (\mu_{[x]})^{x_2 ', k+i}, D_m) = m \cdot H_m ((\mu_{[x]})^{x_2 ', k+i}) \geq m\cdot (1 - O(\frac{1}{m})  - \epsilon).
\end{equation*}
Therefore, we obtain the first option stated in the Lemma
\begin{equation*}
\max_{A\in G_{F^x}} \log N_m (A) \geq \log N_m (A_m) \geq m\cdot (1 - O(\frac{1}{m})  - \epsilon). 
\end{equation*}

If  equation \eqref{Equation insert 2} is satisfied, then we arrive, via an analogues argument, to the conclusion that for some component measure of $P_1\mu$,
\begin{equation*} 
H_m ((P_1 \mu)^{x_1,k+i}) \geq 1 - O(\frac{1}{m})  - \epsilon.
\end{equation*}
Therefore, as above
\begin{equation*}
\max_{A\in G_{P_1 (F)}} \log N_m (A) \geq \log N_m (A_m) \geq m\cdot (1 - O(\frac{1}{m})  - \epsilon). 
\end{equation*}
This is the second option in the Lemma.

We remain with the case when $V_{m}=\mathbb{R}^2$. In this case, we can conclude that (using equation \eqref{Equation equal entropy}),
\begin{equation*}
H_m ( (P_1 \mu)^{x_1,k+i} ) + H_m ((\mu_{[x]})^{x_2 ', k+i}) \geq 2 - \epsilon
\end{equation*}
via an analogue of equation \eqref{Equation saturation}. It follows that  either
\begin{equation*}
H_m ( (P_1 \mu)^{x_1,k+i} ) \geq 1 - \frac{\epsilon}{2}
\end{equation*}
or
\begin{equation*}
H_m ((\mu_{[x]})^{x_2 ', k+i}) \geq 1 - \frac{\epsilon}{2}.
\end{equation*}
Thus, either
\begin{equation*}
\max_{A\in G_{F,e_2}} \log N_m (A)   \geq m\cdot (1 - \frac{\epsilon}{2}). 
\end{equation*}
or
\begin{equation*}
\max_{A\in G_{P_1 (F)}} \log N_m (A)   \geq m\cdot (1 - \frac{\epsilon}{2}).  
\end{equation*}
which are options (3) and (4) in the Lemma.
\end{proof}

We have seen that for every $m\in \mathbb{N}$ such that equation \eqref{Equation increase of entropy} fails for $\delta(\epsilon,m,X)$, one of the  four options stated in Lemma \ref{Lemma four options} must hold. Since we are assuming, towards a contradiction, that equation \eqref{Equation increase of entropy} fails for every $m$,  one of the above options in the Lemma must happen for infinitely many $m$. If either option (1) or (3) occur for infinitely many $m$, then it follows that for this subsequence $m_k$ either
\begin{equation*}
\Delta (G_{F,e_2}) =  \lim_{m\rightarrow \infty} \frac{\log N_m (G_{F,e_2})}{m} = \lim_{k\rightarrow \infty} \frac{\log N_{m_k} (G_{F,e_2})}{m_k} \geq \lim_{k\rightarrow \infty} 1 - O(\frac{1}{m_k})  - \epsilon = 1-\epsilon.  
\end{equation*}
or
\begin{equation*}
\Delta (G_{F,e_2}) \geq 1- \frac{\epsilon}{2}
\end{equation*}
This contradicts our choice of $\epsilon$ in \eqref{Equation choice of epsilon}. If options (2) or (4) happen infinitely often, then similarly
\begin{equation*}
\Delta (G_{P_1 (F)}) \geq 1- \epsilon
\end{equation*}
or
\begin{equation*}
\Delta (G_{P_1 (F)}) \geq 1- \frac{\epsilon}{2}
\end{equation*}
which again contradicts our choice of $\epsilon$. 

We conclude that for our chosen $\epsilon$, there exists some $m$ such that for the $\delta(\epsilon,m,X)$ from Theorem \ref{Theorem Inverse Theorem}, equation \eqref{Equation increase of entropy} is satisfied. As required.
\end{proof}
\subsubsection{Proof of Theorem \ref{Theorem Inverse Theorem for product measures - finite group}}
Recall that we are now assuming that $F$ is a self similar set such that its given generating IFS has the SSC and $1\leq |G_\Phi| <\infty$. The idea here is to use essentially  the proof of the previous subsection, with some modifications.  Recall from the remark following Lemma \ref{WSC for projection} that since $F$ is Furstenberg homogeneous, and $\gamma\cdot (P_1 (F),0) + t \subseteq F$ then $P_1 (F)$ is Furstenberg homogeneous. Thus, $G_{P_1 (F)}$, the collection of all mini-sets of $P_1 (F)$,  is a gallery in this case as well. Also, by the same argument as in the proof of Theorem \ref{Theorem Inverse Theorem for product measures - self homo}, $\dim P_1 (F)<1$ (since it embeds as a slice of $F$). By Example \ref{Gallery of slices - general}, and our assumptions on $F$ and $\Phi$, the collections  of all the mini-sets of vertical slices of $F$ $G_{F,e_2}$ is also a Furstenberg gallery. Recall that the SSC assumption on $\Phi$ implies that $\Delta (G_{F,e_2})<1$.

Thus, we have $\Delta (G_{F,e_2}), \Delta (G_{P_1 (F)})<1$. So, let $\epsilon$ be small enough so that 
\begin{equation} 
\max \lbrace \Delta (G_{F,e_2}) , \Delta (G_{P_1 (F)}) \rbrace < 1- \epsilon.
\end{equation}
Let $X \subset G_2$ be a bounded (see Step 15 of the proof  of part (2) of Theorem \ref{Theorem embeddings} for the explicit $X$ we need).

Now, for every $m\in \mathbb{N}$ produce the $\delta$ corresponding to $\epsilon,m,X$ and any measure $\nu$  supported on $X$ (see the remarks after Theorem \ref{Theorem Inverse Theorem}) of entropy dimension at least $1$. We claim that there exists some $m\in \mathbb{N}$ such that:

Equation \eqref{Equation increase of entropy} in Theorem \ref{Theorem Inverse Theorem} is satisfied, when we apply the Theorem for $\nu$ and any member of the following family of product measures: For every  $F' \subseteq F$ such that $F'$ is a homothetic set, let $\mu$ be the natural self similar measure on $F'$, and let   $\theta = P_1 \mu \times P_2 (\mu_{[x]}|_{S}) \in \text{Pro}(F')$ (where $S$ has positive $\mu_{[x]}$ measure, see equation \eqref{Equation for Pro(F)}), such that $\theta$ is not supported on any affine line.

If this is not true, then for every $m\in \mathbb{N}$ we can find such $F' \subseteq F$ and such a  product measure $P_1 (\mu) \times P_2 (\mu_{[x]})$ such that equation \eqref{Equation increase of entropy} fails (we suppress the $S$ in the notation).  For every such $m$  we can apply Lemma \ref{Lemma four options} for the measure $P_1 (\mu) \times P_2 (\mu_{[x]})$ and deduce that one of the following options must hold:
\begin{enumerate}
\item $\max_{A\in G_{F' ,e_2}} \log N_m (A)  \geq m\cdot (1 - O(\frac{1}{m})  - \epsilon). $
\item $\max_{A\in G_{P_1 (F')}} \log N_m (A)  \geq m\cdot (1 - O(\frac{1}{m})  - \epsilon). $
\item $\max_{A\in G_{P_1 (F'))}} \log N_m (A)  \geq m\cdot (1   - \frac{\epsilon}{2}). $
\item $\max_{A\in G_{F' ,e_2}} \log N_m (A)  \geq m\cdot (1   - \frac{\epsilon}{2}). $
\end{enumerate}

It remains to note that $F' \subseteq F$ so we have
\begin{equation*}
\max_{A\in G _{F,e_2}} \log N_m (A)  \geq \max_{A\in G_{F',e_2}} \log N_m (A)  
\end{equation*} 
and since $P_1 (F') \subset P_1 (F)$ we have
\begin{equation*}
\max_{A\in G _{P_1 (F)}} \log N_m (A)  \geq \max_{A\in G_{P_1 (F')}} \log N_m (A).
\end{equation*}

Therefore, our assumption towards a contradiction means that one of  one of the following options must hold for infinitely many $m$:
\begin{enumerate}
\item $\max_{A\in G_{F,e_2}} \log N_m (A)  \geq m\cdot (1 - O(\frac{1}{m})  - \epsilon). $
\item $\max_{A\in G_{P_1 (F)}} \log N_m (A)  \geq m\cdot (1 - O(\frac{1}{m})  - \epsilon). $
\item $\max_{A\in G_{P_1 (F))}} \log N_m (A)  \geq m\cdot (1   - \frac{\epsilon}{2}). $
\item $\max_{A\in G_{F,e_2}} \log N_m (A)  \geq m\cdot (1   - \frac{\epsilon}{2}). $
\end{enumerate}
However, one sees that our choice of $\epsilon$ guarantees that this is impossible (see the end of the proof of Theorem \ref{Theorem Inverse Theorem for product measures - self homo}). The Theorem follows.

\bibliography{bib}{}
\bibliographystyle{plain}

\end{document}